\documentclass[11pt]{article}
\usepackage[a4paper,margin=1in]{geometry}
\usepackage{graphicx}
\usepackage{amsmath,amssymb,amsfonts,amsthm,mathtools,bm,mathrsfs}
\usepackage{enumitem,booktabs,xcolor}
\usepackage[numbers,sort&compress]{natbib}
\usepackage[hidelinks]{hyperref}
\theoremstyle{plain}
\newtheorem{theorem}{Theorem}[section]
\newtheorem{proposition}[theorem]{Proposition}
\newtheorem{lemma}[theorem]{Lemma}
\newtheorem{corollary}[theorem]{Corollary}
\theoremstyle{definition}
\newtheorem{definition}[theorem]{Definition}
\newtheorem{assumption}[theorem]{Assumption}
\newtheorem{example}[theorem]{Example}
\theoremstyle{remark}
\newtheorem{remark}[theorem]{Remark}
\newcommand{\R}{\mathbb{R}}
\newcommand{\Rd}{\mathbb{R}^d}
\newcommand{\dd}{\,\mathrm{d}}
\newcommand{\dist}{\operatorname{dist}}
\newcommand{\proj}{\operatorname{proj}}
\newcommand{\osc}{\operatorname{osc}}

\newcommand{\MMD}{\mathrm{MMD}}
\newcommand{\mcP}{\mathcal{P}}

\newcommand{\mcL}{\mathcal{L}}
\newcommand{\mcS}{\mathcal{S}}
\newcommand{\mcT}{\mathcal{T}}
\newcommand{\one}{\mathbf{1}}
\newcommand{\eps}{\varepsilon}
\newcommand{\Tr}{\operatorname{Tr}}
\newcommand{\divg}{\nabla\!\cdot\!}

\newcommand{\wl}{w_\lambda}
\newcommand{\muh}{\widehat\mu_\lambda}
\newcommand{\hh}{\widehat h}
\newcommand{\Hm}{\mathcal{H}^{d-1}}
\newcommand{\TV}{\mathrm{TV}}
\newcommand{\dBL}{d_{\mathrm{BL}}}
\newcommand{\dob}{\mathfrak{d}}
\newcommand{\rli}{\rho_{\lambda,\infty}}
\newcommand{\ri}{\rho_{\infty}}

\begin{document}
\title{Moreau--Yosida Penalization of McKean--Vlasov Dynamics: Full-Flux Limits and Boundary Reaction Measures}
\author{Geuntaek Seo\textsuperscript{1}\thanks{Geuntaek Seo and Beomjun Choi contributed equally and share first authorship.}\quad
Beomjun Choi\textsuperscript{2}\footnotemark[1]\quad
Yeoneung Kim\textsuperscript{3}\thanks{Corresponding author.}}
\date{}
\maketitle
\begin{center}
\small
\textsuperscript{1}Department of Mathematics,\\
Pohang University of Science and Technology, Pohang, Republic of Korea\\
\texttt{gtseo@postech.ac.kr}\\[0.5em]
\textsuperscript{2}Department of Mathematical Sciences,\\
Korea Advanced Institute of Science and Technology, Daejeon, Republic of Korea\\
\texttt{bchoi@kaist.ac.kr}\\[0.5em]
\textsuperscript{3}Department of Industrial Engineering,\\
Yonsei University, Seoul, Republic of Korea\\
\texttt{yeoneung@yonsei.ac.kr}
\end{center}
\begin{abstract}
We study McKean--Vlasov Fokker--Planck dynamics on a compact convex set
$K\subset\Rd$ through the whole-space Moreau--Yosida penalization
$U_\lambda=\dist(\cdot,K)^2/(2\lambda)$, keeping the external and interaction
potentials fixed. At finite times, the penalized densities converge to the
unique variational solution of the reflected no-flux equation, strongly in
space--time $L^2$ on $K$ and in whole-space $L^1$, and narrowly, uniformly
in time. We prove that the singular
penalty-force measures converge weakly-$*$ to the purely normal boundary
reaction $\Tr(\rho)n\,\Hm\!\restriction_{\partial K}\,\dd t$.
This measure cancels exactly the surface singularity in the distributional
gradient of the zero-extended density, explaining the emergent no-flux
condition at the flux level. At equilibrium, we establish sharp
$\sqrt\lambda$ rates in $L^1$ and Wasserstein distances for kernel-free Gibbs
states, and in $L^1$ and quadratic Wasserstein distance for interacting
fixed points under Dobrushin smallness or positive semidefiniteness,
including maximum mean discrepancy interactions. The proof combines a
tilted Gaussian supersolution, nonautonomous weighted energy estimates,
and normal factorization with a Green-formula identification. We also
identify the limit with a nonlinear reflecting diffusion, obtain path-space
convergence and $\lambda$-uniform propagation of chaos through monotone
synchronous coupling, and derive all-time Gibbs domination in the reversible
case. Numerical experiments illustrate the boundary reaction and stationary
scalings on a constrained maximum mean discrepancy flow.
\end{abstract}

\noindent\textbf{Keywords:} Moreau--Yosida penalization, McKean--Vlasov equation, full-flux convergence, no-flux boundary condition, boundary reaction measure, reflected diffusion\par
\noindent\textbf{MSC Classification:} 35K61, 35B25, 35Q84, 60H30, 65M15\par

\nocite{AlasioBrunaCarrillo2020,AlexanderKim2016,AGS,ArbelKorbaSalimGretton2019,BKRS,BrosseDurmusMoulinesPereyra2018,BubeckEldanLehec2018,CarrilloMcCannVillani2003,GurbuzbalabanHuZhu2024,Hairer2021,HindsSharmaTretyakov2025,JKO,LaayounMissaoui2026,Lamperski2021,LionsMagenes,LionsSznitman1984,ManeBoukardagha2025,Menaldi1983,NingWu2026,Peyre2018,Santambrogio,Slominski1994,Slominski2013,Sznitman1984,Sznitman1991,Tanaka1979,YuZhangYu2026}

\section{Introduction}\label{sec:intro}

We consider interacting diffusions whose states must remain in a compact convex
set $K\subset\Rd$ with nonempty interior and $C^2$ boundary. Such constraints arise, for example, from trust
regions, admissible physical parameters, or smooth surrogates of boxes and
simplices in constrained sampling and optimization. One standard way to
enforce this constraint is to add an
inward normal correction that acts only when a trajectory lies on the
boundary and keeps it inside $K$. This construction is called
\emph{normal reflection} \cite{Tanaka1979,LionsSznitman1984}. At the density
level, it gives a no-flux boundary condition: no probability flows across
$\partial K$. Soft confinement approximates this boundary correction by a
restoring drift outside $K$, allowing the approximating trajectories to move
in the whole space. We study the canonical quadratic Moreau--Yosida wall
as its penalty parameter $\lambda$ tends to zero, keeping the external and
interaction potentials fixed. Although the mass outside $K$ vanishes, the penalty-force measures converge to a normal reaction measure supported on $\partial K$.

Before imposing the constraint, a pairwise-gradient
particle model is given by the stochastic differential equations (SDEs)
\begin{equation}\label{eq:particles-free-intro}
  \dd X_t^{i,N}
  =-\left[
      \nabla\Phi(X_t^{i,N})
      +\frac1N\sum_{j=1}^N
       \nabla_xW(X_t^{i,N},X_t^{j,N})
    \right]\dd t
  +\sqrt2\,\dd B_t^i,
\end{equation}
for $i=1,\ldots,N$, with independent Brownian motions $B^1,\ldots,B^N$.
Assume that the initial
positions are i.i.d.\ with density $\rho_0$ and are independent of these
Brownian motions. Under the bounded Lipschitz drift assumptions stated in
Assumption~\ref{ass:coefficients}, the empirical measures
$\rho_t^N=N^{-1}\sum_i\delta_{X_t^{i,N}}$ converge narrowly in probability,
for each fixed time as $N\to\infty$, to the probability solution of the McKean--Vlasov Fokker--Planck equation
\cite{Sznitman1991}
\begin{equation}\label{eq:mv-free}
  \partial_t\rho=\divg\bigl(\nabla\rho+\rho\,b[\rho_t]\bigr),
  \qquad
  b[\mu](x):=\nabla\Phi(x)+\int_{\Rd}\nabla_xW(x,y)\,\dd\mu(y).
\end{equation}
Here $\Phi$ collects external and data-attraction potentials and $W$ is a
symmetric interaction kernel.

For the normally reflected process in $K$, the density solves
\eqref{eq:mv-free} in $K$, with
\[
  \bigl(\nabla\rho+\rho\,b[\rho_t]\bigr)\cdot n=0
  \qquad\text{on }\partial K,
\]
where $n$ is the outward unit normal.
The boundary correction in the particle equation is represented by a
nondecreasing regulator, often called boundary local time, whose increments
are supported on boundary-contact times; see \eqref{eq:rmv-sde} for its
normalization. The whole-space approximation replaces this correction by
$-\nabla U_\lambda$, where
\[
  U_\lambda(x):=\frac{\dist(x,K)^2}{2\lambda},
  \qquad
  \nabla U_\lambda(x)=\frac1\lambda\bigl(x-\proj_Kx\bigr).
\]
The resulting continuous-time particle system is
\begin{equation}\label{eq:particles}
\begin{aligned}
  \dd X_t^{i,N,\lambda}
  ={}&-\Bigg[
      \nabla\Phi(X_t^{i,N,\lambda})
      +\frac1N\sum_{j=1}^N
       \nabla_xW\bigl(X_t^{i,N,\lambda},X_t^{j,N,\lambda}\bigr)\\[-0.2em]
  &\hspace{5.2em}
      +\frac1\lambda
       \bigl(X_t^{i,N,\lambda}-\proj_KX_t^{i,N,\lambda}\bigr)
    \Bigg]\dd t
  +\sqrt2\,\dd B_t^i .
\end{aligned}
\end{equation}

For each fixed $\lambda>0$, the same initial independence assumptions and
propagation-of-chaos theory give the mean-field equation
\begin{equation}\label{eq:mv-pen-intro}
  \partial_t\rho^\lambda
  =\divg\bigl(
       \nabla\rho^\lambda
       +\rho^\lambda\nabla U_\lambda
       +\rho^\lambda b[\rho_t^\lambda]
       \bigr)
  \qquad\text{in }(0,T)\times\Rd.
\end{equation}

Writing $\rho^{N,\lambda}$ for the empirical measure of these particles,
the approximation has the two-step form
\[
  \rho^{N,\lambda}
  \xrightarrow[N\to\infty]{\text{mean field}}
  \rho^\lambda
  \xrightarrow[\lambda\downarrow0]{\text{soft to hard}}
  \rho\one_K.
\]
Here the limiting density on $K$ is extended by zero to $\Rd$.
The first arrow is classical for fixed $\lambda$.

The second arrow is singular. At distances of order $\sqrt\lambda$ from
$K$ the restoring drift has magnitude of order $\lambda^{-1/2}$, so the
penalty part of the \emph{full flux}
\[
  J^\lambda:=\nabla\rho^\lambda+\rho^\lambda\nabla U_\lambda
                  +\rho^\lambda b[\rho_t^\lambda],
  \qquad \partial_t\rho^\lambda=\divg J^\lambda,
\]
develops a singular boundary-measure limit, even though the mass outside
$K$ vanishes. The difficulty is therefore not merely to recover a weak no-flux
formulation: one must determine in which topologies $\rho^\lambda$
converges, which flux is stable, and what this boundary measure is. Our
answer is that the three components of the full flux
converge separately in distributions on $(0,T)\times\Rd$:
\begin{equation}\label{eq:cancellation-intro}
  \begin{aligned}
    \nabla\rho^\lambda
      &\longrightarrow\one_K\nabla\rho
        -\Tr(\rho)n\,\Hm\!\restriction_{\partial K},\\
    \rho^\lambda\nabla U_\lambda
      &\longrightarrow\Tr(\rho)n\,\Hm\!\restriction_{\partial K},\\
    \rho^\lambda b[\rho^\lambda_t]
      &\longrightarrow\one_K\rho\,b[\rho_t],
  \end{aligned}
  \qquad\text{hence}\qquad
  J^\lambda\longrightarrow\one_KJ,
\end{equation}
with Lebesgue measure in time understood; here
$J:=\nabla\rho+\rho\,b[\rho_t]$ and $\Tr(\rho)$ is the interior $H^1(K)$
trace. The two surface measures cancel.

For symmetric $W$, the equations have the classical Wasserstein
gradient-flow structure for entropy, potential, and interaction energies
\cite{JKO,AGS,CarrilloMcCannVillani2003,Santambrogio}. One example in our
class is obtained by adding entropy to the maximum mean discrepancy (MMD)
interaction energy studied by Arbel et al.~\cite{ArbelKorbaSalimGretton2019}; see
Example~\ref{ex:mmd}. Their basic MMD flow is a transport equation, whereas
the entropy term produces the diffusion considered here. The diffusion
coefficient does not depend on $\lambda$. In the kernel-free case $W\equiv0$ and
$\Phi=f$, the equation reduces to penalized overdamped Langevin dynamics,
with explicit Gibbs density proportional to $e^{-f-U_\lambda}$.

\paragraph{Contributions.}

The results concern finite-time dynamical limits, stationary limits, and
their probabilistic interpretation. Further hypotheses are stated with
each result.

\begin{enumerate}[label=(\roman*),leftmargin=2.2em]

\item \emph{Dynamical full-flux limit and boundary reaction.}

Under Assumptions~\ref{ass:geometry}, \ref{ass:coefficients}, and
\ref{ass:data}, the penalty-force measures have uniformly bounded total
variation and converge weakly-$*$ in $\mathcal M([0,T]\times\Rd;\Rd)$:
\[
  Q^\lambda:=\rho^\lambda\nabla U_\lambda\,\dd x\,\dd t
  =\frac1\lambda\rho^\lambda(x-\proj_Kx)\,\dd x\,\dd t
  \rightharpoonup^*
  \Tr(\rho)n\,\Hm\!\restriction_{\partial K}\,\dd t,
\]
and the three components of $J^\lambda$ converge as in
\eqref{eq:cancellation-intro}; see Theorem~\ref{thm:main}. Along the way,
the penalized densities converge to the unique variational solution of
the reflected McKean--Vlasov equation, strongly in $L^2((0,T)\times K)$
and in $L^1((0,T)\times\Rd)$ after zero extension of the limit, and the
time marginals converge narrowly, uniformly on $[0,T]$. The exterior mass
is $O(\sqrt\lambda)$ uniformly in time, the full flux converges weakly in
$L^2((0,T)\times K)$ to $J$, and its $L^1((0,T)\times K^c)$ norm is
$O(\lambda^{1/4})$. 

\item \emph{Quantitative stationary soft-to-hard limits.}

Theorem~\ref{thm:stationary-main} establishes existence and uniqueness of
the interacting Gibbs fixed points and two-sided $\sqrt\lambda$ bounds in
$L^1$ and $W_2$, under the geometry and coefficient hypotheses, with $\Phi$
bounded and either the gauge-invariant Dobrushin condition $\kappa<1$ or
positive semidefiniteness of $W$. The positive semidefinite case includes
the Gaussian MMD kernels of Example~\ref{ex:mmd}, with no smallness
restriction on their amplitude. In the kernel-free case $W\equiv0$,
Proposition~\ref{prop:kernel--free} assumes only that $\Phi=f\in C^2(\Rd)$
is bounded below and gives an exact $L^1$ identity together with two-sided
$\sqrt\lambda$ bounds in total variation, $W_1$, and $W_2$.

\item \emph{Probabilistic interpretation.}

Corollary~\ref{cor:probabilistic} identifies the limiting variational
solution with the time marginals of the nonlinear normally reflected
diffusion~\eqref{eq:rmv-sde}. The laws of the penalized McKean--Vlasov
diffusions converge narrowly to these marginals, uniformly in time, and
their averaged penalty-force occupation measures converge weakly-$*$ to
the boundary reaction measure of~(i). At the level of paths,
Proposition~\ref{prop:pathwise} combines monotone synchronous coupling
with a classical prescribed-drift penalization estimate to obtain
$\lambda$-uniform propagation of chaos and the following joint bound,
when all processes share the same i.i.d.\ initial data and Brownian
motions:
\[
  \mathbb E\sup_{t\le T}|X_t^{i,N,\lambda}-X_t^i|
  \le C\left[N^{-1/2}
       +\bigl(\lambda\log(1/\lambda)\bigr)^{1/4}\right],
  \qquad 0<\lambda\le\tfrac12,
\]
with $C$ independent of $N$ and $\lambda$, so the particle error and the
soft-wall bias simply add. The same proposition shows that the accumulated
penalty of the penalized McKean--Vlasov process converges to the
reflection term $\int_0^\cdot n(X_s)\dd L_s$ at the rate
$(\lambda\log(1/\lambda))^{1/4}$ in expected uniform norm on $[0,T]$.

\end{enumerate}

\paragraph{Main ideas.}
The dynamical proof combines a Gaussian barrier uniform over the
admissible drifts with weighted energy estimates in the spirit of
\cite{AlasioBrunaCarrillo2020} and full-flux compactness.
Normal factorization and the Gauss--Green formula then identify the
boundary reaction.
The stationary $W_2$ estimate combines projection of the exterior layer
onto $\partial K$ with control of the interior redistribution in
$\dot H^{-1}(K)$, using a Benamou--Brenier argument in the spirit of
\cite{Peyre2018}.


\paragraph{Related literature.}

Classical constructions of reflected diffusions and the associated
Skorokhod problem include Tanaka~\cite{Tanaka1979} and
Lions--Sznitman~\cite{LionsSznitman1984}. For bounded convex domains,
Menaldi~\cite[Thm.~3.1 and (3.19)--(3.20)]{Menaldi1983} proves convergence
of the penalized state and accumulated penalty in $L^p$ of the uniform path
norm. S{\l}omi\'nski~\cite{Slominski1994} studies Euler and Euler--Peano
approximations of reflected SDEs; quantitative continuous-time penalization
estimates are given in \cite[Thm.~4.1]{Slominski2013}. The prescribed-drift
estimate used in Proposition~\ref{prop:pathwise} is specifically
\cite[Thm.~4.1(ii)]{Slominski2013}.
Sznitman~\cite{Sznitman1984} constructed nonlinear reflecting diffusions
and proved propagation of chaos; \cite{Sznitman1991} gives a general account
of propagation of chaos. 
Hinds, Sharma, and Tretyakov~\cite{HindsSharmaTretyakov2025}
establish well-posedness and propagation of chaos for reflected
McKean--Vlasov SDEs on bounded, possibly nonconvex domains
with $C^3$ boundary.
Ning and Wu~\cite{NingWu2026} establish well-posedness and propagation of
chaos for one-dimensional McKean--Vlasov stochastic variational
inequalities under local regularity and growth assumptions.
Laayoun and Missaoui~\cite{LaayounMissaoui2026} use penalization and relaxed
controls to approximate reflected mean field game equilibria.

A particularly close precursor is Alasio, Bruna, and
Carrillo~\cite{AlasioBrunaCarrillo2020}. Their Theorem~1 includes linear
diffusion with a nonzero interaction and obtains a weak no-flux equation
from whole-space equations with increasingly strong exterior confinement.
Their weighted $L^2$ estimates and compactness argument already control
both density and flux: Section~3.4 passes to the interior flux limit and
shows that exterior full-flux terms vanish. Their estimates imply
full-flux convergence against smooth compactly supported vector fields.
Thus density convergence and distributional full-flux convergence have
close precedents in that work. Their confining potentials interpolate
across a shrinking collar and are not the fixed-potential additive
quadratic wall considered here. Alexander and
Kim~\cite{AlexanderKim2016} study a related whole-space approximation of
local parabolic PDEs with homogeneous oblique boundary data, using a cubic
distance penalty and obtaining quantitative uniform error estimates.

In the present model, $\Phi$ and $W$ are fixed on all of $\Rd$ and only
$U_\lambda=\dist^2/(2\lambda)$ varies. Our main dynamical contribution is a
measure-level convergence theorem for the explicit penalty force under a
law-dependent drift, namely the weak-$*$ convergence of $Q^\lambda$
and the decomposition \eqref{eq:cancellation-intro}. The trace
identification itself rests on full-flux convergence and Gauss--Green; the
new input is the control of the force measures supplied by the quadratic
wall.

For constrained sampling, Brosse et
al.~\cite[Props.~4--5]{BrosseDurmusMoulinesPereyra2018} use the same
quadratic Moreau--Yosida wall and obtain stationary total-variation and
$W_1$ bias bounds of order $\sqrt\lambda$ for compactly supported
log-concave targets. G{\"u}rb{\"u}zbalaban, Hu, and
Zhu~\cite[Thm.~2.7]{GurbuzbalabanHuZhu2024} also give a stationary $W_2$
bias bound without convexity of the target potential, at the weaker order
$O((\lambda\log(1/\lambda))^{1/8})$, up to a constant rescaling of the
penalty parameter. Their paper separately treats sampling-algorithm
errors in total variation and $W_2$. Mane and
Boukardagha~\cite[Thm.~3.12]{ManeBoukardagha2025} study polynomial
stationary $W_2$ convergence under strong convexity. 
For fixed potentials, stationary Wasserstein bounds for quadratic
penalization are also studied by Yu, Zhang, and Yu~\cite{YuZhangYu2026}.
Our stationary result gives two-sided $W_2\asymp\sqrt\lambda$ bounds under the hypotheses above,
without log-concavity, and extends them to interacting Gibbs fixed points.
Projection-based constrained Langevin algorithms are studied by Bubeck,
Eldan, and Lehec~\cite{BubeckEldanLehec2018} and
Lamperski~\cite{Lamperski2021}.

\paragraph{Organization.}
Section~\ref{sec:setting} states the assumptions and main results, and
Section~\ref{sec:wp} gives the well-posedness preliminaries.
Sections~\ref{sec:dynamical-proof} and \ref{sec:stationary-proof} prove
the dynamical and stationary limits. Section~\ref{sec:probabilistic}
treats the kernel-free and stochastic consequences, and
Section~\ref{sec:numerics} gives the numerical illustration. The
appendices collect geometry, well-posedness, and comparison arguments.

\paragraph{Notation.}
$\mcP(\Rd)$ is the set of Borel probability measures; measures with Lebesgue
densities are identified with their densities.  The bounded-Lipschitz metric
is $\dBL(\mu,\nu):=\sup\{\int\varphi\dd(\mu-\nu):\|\varphi\|_\infty
+\|\nabla\varphi\|_\infty\le1\}$; on $\mcP(\Rd)$ it metrizes narrow
convergence along tight sequences.  For $x\in\Rd$,
$\dist(x)=\dist(x,K)$ and $\proj_Kx$ is the (unique, by convexity) closest
point of $K$.  We write $\Gamma:=\partial K$, $n(y)$ for the outer unit
normal at $y\in\Gamma$, and $n(x):=\nabla\dist(x)=(x-\proj_Kx)/\dist(x)$ for
$x\notin K$.  The penalty potential is $U_\lambda:=\dist^2/2\lambda$, the
penalty weight $\wl:=e^{-U_\lambda}$, and $\muh:=\wl\dd x$ (not normalized);
note $\wl\equiv1$ on $K$.  Sobolev spaces on $K$ are those on
$\operatorname{int}K$, with traces taken from the interior, and fluxes
follow the sign convention $\partial_t\rho=\divg J$, so that the
probability current is $-J$.  For a bounded function $f$ on a set $S$,
$\osc_S f:=\sup_Sf-\inf_Sf$.  {For a finite signed
measure $\sigma$, $\|\sigma\|_{\TV}:=|\sigma|(\Rd)$; hence for densities
this is the $L^1$ norm, while the probabilistic total-variation distance is
$\tfrac12\|\sigma\|_{\TV}$.}
Constants $C$ may change from line to line. In the dynamical estimates they
may depend on $d$, $K$, $T$, the coefficient bounds in
Assumption~\ref{ass:coefficients}, and the initial-data bounds $M,M_2$ in
Assumption~\ref{ass:data}, but never on $\lambda$; other dependencies are
specified where they arise.
Throughout, $\lambda\in(0,\lambda_0]$ for a fixed
$\lambda_0\le1$ determined by $K$ and the data.

\section{Setting and main results}\label{sec:setting}

\subsection{Penalized and reflected interacting dynamics}

Fix a time horizon $T>0$.  Given a potential pair $(\Phi,W)$ as in
Assumption~\ref{ass:coefficients} below, define for $\mu\in\mcP(\Rd)$ the
drift field
\begin{equation}\label{eq:drift-def}
  b[\mu](x):=\nabla\Phi(x)+\int_{\Rd}\nabla_xW(x,y)\dd\mu(y).
\end{equation}
The \emph{penalized interacting dynamics} is the McKean--Vlasov
Fokker--Planck equation
\begin{equation}\label{eq:pen-pde}
  \partial_t\rho^\lambda=\divg J^\lambda,
  \qquad
  J^\lambda:=\nabla\rho^\lambda+\rho^\lambda\nabla U_\lambda
  +\rho^\lambda\,b[\rho^\lambda_t],
  \qquad\text{in }(0,T)\times\Rd,
\end{equation}
with initial datum $\rho^\lambda(0,\cdot)=\rho_0$.  The \emph{reflected
interacting dynamics} is
\begin{equation}\label{eq:ref-pde}
\begin{cases}
  \partial_t\rho=\divg J, & \text{in }(0,T)\times K,\\[0.2em]
  J\cdot n=0, & \text{on }(0,T)\times\Gamma,\\[0.2em]
  \rho(0,\cdot)=\rho_0, & \text{in }K,
\end{cases}
\qquad
  J:=\nabla\rho+\rho\,b[\rho_t].
\end{equation}
We shall systematically distinguish the \emph{full flux} $J^\lambda$ of
\eqref{eq:pen-pde} from the \emph{interior (unpenalized) flux}
\begin{equation}\label{eq:JK-def}
  J^\lambda_K:=\nabla\rho^\lambda+\rho^\lambda\,b[\rho^\lambda_t],
\end{equation}
which coincides with $J^\lambda$ on $K$ (where $\nabla U_\lambda=0$) but not
outside.  The no-flux condition in \eqref{eq:ref-pde} concerns the normal
trace of the limiting interior flux $J$, whereas the penalty component
$\rho^\lambda\nabla U_\lambda$ carries an $O(1)$ boundary reaction
(Theorem~\ref{thm:main}); see
Remark~\ref{rem:flux-terminology}.

For symmetric $W$, these equations have the usual Wasserstein
gradient-flow interpretation for entropy--potential--interaction energies
\cite{JKO,AGS,Santambrogio}. Here the Moreau--Yosida regularization is applied
to the Euclidean indicator $I_K$, through $U_\lambda=(I_K)_\lambda$.
The force-level analysis below uses domination and weighted $L^2$ estimates
and requires no convexity of the dynamical energy. Positive semidefiniteness
of $W$ is used only as an alternative to smallness in the stationary theory.

\begin{example}[Constrained MMD flow]\label{ex:mmd}
Let $\nu\in\mcP(K)$ be a target measure and
$k_\sigma(x,y)=\alpha\exp(-|x-y|^2/2\sigma^2)$ the Gaussian kernel with
amplitude $\alpha>0$.  The $\tfrac12\MMD_{k_\sigma}^2(\cdot,\nu)$ gradient
flow with entropic regularization corresponds to
\[
  W=k_\sigma,\qquad \Phi=-k_\sigma*\nu,
\]
so that $\Phi\in C^\infty_b(\Rd)$ and $W\in
C^\infty_b(\Rd\times\Rd)$.  Every hypothesis of the dynamical theory of this
paper is satisfied; {since $k_\sigma$ is positive definite, the
static theory (Theorem~\ref{thm:stationary-main}) applies for every amplitude
$\alpha>0$.  The Dobrushin coefficient of \eqref{eq:dob-def} satisfies
$\kappa\le\alpha/2$, so $\alpha<2$ also suffices for the contraction
alternative.}  The
constrained MMD particle algorithm with quadratic penalty is exactly the
particle discretization of \eqref{eq:pen-pde}
(see \eqref{eq:particles}).
\end{example}

\begin{example}[Penalized Langevin dynamics]\label{ex:langevin}
With $W\equiv0$ and $\Phi=f$, \eqref{eq:pen-pde} is the Fokker--Planck
equation of penalized overdamped Langevin dynamics with Gibbs density
$\rli\propto e^{-f-U_\lambda}$, while \eqref{eq:ref-pde} describes
reflected Langevin sampling on $K$. The reversible specialization and
quantitative Gibbs estimates are treated in Section~\ref{sec:probabilistic}.
\end{example}

\subsection{Assumptions}

\begin{assumption}[Geometry]\label{ass:geometry}
$K\subset\Rd$ is compact and convex with nonempty interior and $C^2$ boundary.
(The strong convergence theory of the companion paper~\cite{SeoKimChoiStrong} additionally requires $\partial K\in C^3$; nothing
in the present paper does.)
\end{assumption}

Under Assumption~\ref{ass:geometry} the projection $\proj_K$ is single-valued
and $1$-Lipschitz on $\Rd$, $\dist^2$ is $C^{1,1}(\Rd)$, $\dist$ is $C^2$ on
an exterior collar $\{0<\dist<\delta_0\}$ and $C^{1,1}_{\rm loc}$ on $K^c$,
with
\begin{equation}\label{eq:lap-dist-bound}
  0\le\Delta\dist(x)\le C_K^*,\qquad
  |D^2\dist(x)|\le C_K^*\quad\text{for a.e. }x\in K^c,
\end{equation}
where $C_K^*$ depends only on $K$ (curvature bounds in the collar and the
bound $\Delta\dist\le(d-1)/\dist$ outside it; see
Appendix~\ref{app:geometry}).  We also use the Gaussian layer estimates of
Lemma~\ref{lem:layer}.

\begin{assumption}[Coefficients]\label{ass:coefficients}
$\Phi\in C^2(\Rd)$ with $\nabla\Phi\in C^1_b(\Rd;\Rd)$, and
$W\in C^2_b(\Rd\times\Rd)$ with $W(x,y)=W(y,x)$.  Set
\[
  B_0:=\|\nabla\Phi\|_\infty+\|\nabla_xW\|_\infty,\qquad
  B_1:=\|\nabla^2\Phi\|_\infty+\|\nabla_x^2W\|_\infty,\qquad
  L_W:=\|\nabla_xW\|_\infty .
\]
\end{assumption}

Assumption~\ref{ass:coefficients} yields, for all $\mu,\nu\in\mcP(\Rd)$
with densities $\rho_\mu,\rho_\nu\in L^1$,
\begin{equation}\label{eq:drift-bounds}
  \|b[\mu]\|_\infty\le B_0,\qquad
  \|\nabla b[\mu]\|_\infty\le B_1,\qquad
  \|b[\mu]-b[\nu]\|_\infty\le L_W\,\|\rho_\mu-\rho_\nu\|_{L^1(\Rd)} .
\end{equation}
For measures without densities the same bounds hold with the total
variation in place of the $L^1$ norm and, more usefully for empirical
measures, Kantorovich--Rubinstein duality gives the metric estimate
\begin{equation}\label{eq:drift-W1}
  \|b[\mu]-b[\nu]\|_\infty
  \ \le\ \|\nabla_y\nabla_xW\|_\infty\,W_1(\mu,\nu),
  \qquad \mu,\nu\in\mcP_1(\Rd),
\end{equation}
since for each fixed $x$ the function $y\mapsto\nabla_xW(x,y)$ is
Lipschitz with constant $\|\nabla_y\nabla_xW\|_\infty\le\|W\|_{C^2}$.
In particular the drift class
\begin{equation}\label{eq:drift-class}
  \mathcal B:=\bigl\{\,b\in C^1(\Rd;\Rd):\ \|b\|_\infty\le B_0,\
  \|\nabla b\|_\infty\le B_1\,\bigr\}
\end{equation}
contains $b[\mu]$ for every probability measure $\mu$, uniformly in
$\lambda$ and $t$.  This uniformity is the structural fact on which the whole
paper rests.

\begin{remark}
Boundedness of $\nabla\Phi$ on all of $\Rd$ is a convenience: since the
penalized mass at distance $\ge1$ from $K$ is $O(e^{-c/\lambda})$
(Corollary~\ref{cor:moments}), only the values of $\Phi$ and $W$ in a fixed
neighborhood of $K$ should matter.  We emphasize, however, that all
statements are proved under the global bounds as stated; replacing a
globally confining $\Phi$ by a truncation is a localization question at the
level of the flows, which we do not pursue (see
Remark~\ref{rem:linear-truncation}).  The MMD data of
Example~\ref{ex:mmd} satisfies the assumption as stated.
\end{remark}

\begin{assumption}[Initial data]\label{ass:data}
$\rho_0\in L^\infty(K)$, $\rho_0\ge0$, $\int_K\rho_0\dd x=1$, extended by
zero to $\Rd$ (\emph{hard initialization}); set $M:=\|\rho_0\|_{L^\infty(K)}$
and $M_2:=\|\rho_0\|_{L^2(K)}$.
\end{assumption}

\subsection{Solution concepts}

\begin{definition}[Penalized solutions]\label{def:pen-sol}
A \emph{penalized solution} on $[0,T]$ is a nonnegative
$\rho^\lambda\in C([0,T];L^1(\Rd))$ with $\rho^\lambda_t\in\mcP(\Rd)$,
such that $\hh^\lambda:=\rho^\lambda/\wl\in
L^\infty(0,T;L^2(\muh))\cap L^2(0,T;H^1(\muh))$ and, for all
$\varphi\in C_c^\infty([0,T)\times\Rd)$,
\begin{equation}\label{eq:pen-weak}
  \int_{\Rd}\rho_0\varphi(0)\dd x
  +\int_0^T\!\!\int_{\Rd}\rho^\lambda\,\partial_t\varphi\dd x\dd t
  =\int_0^T\!\!\int_{\Rd}
  \bigl(\wl\nabla\hh^\lambda+\rho^\lambda\,b[\rho^\lambda_t]\bigr)
  \cdot\nabla\varphi\dd x\dd t,
\end{equation}
where $H^1(\muh):=\{v:\int(v^2+|\nabla v|^2)\dd\muh<\infty\}$.  Note that
$\wl\nabla\hh^\lambda=\nabla\rho^\lambda+\rho^\lambda\nabla U_\lambda$, so
\eqref{eq:pen-weak} is the weak form of \eqref{eq:pen-pde} with
$J^\lambda=\wl\nabla\hh^\lambda+\rho^\lambda b[\rho^\lambda_t]$.
\end{definition}

\begin{definition}[Variational solutions of the reflected equation]
\label{def:var-sol}
A \emph{variational solution} of \eqref{eq:ref-pde} is a nonnegative
\[
  \rho\in L^\infty(0,T;L^2(K))\cap L^2(0,T;H^1(K)),\qquad
  \partial_t\rho\in L^2(0,T;H^1(K)'),
\]
with $\rho(0)=\rho_0$, such that for a.e.\ $t$ and all $\varphi\in H^1(K)$,
\begin{equation}\label{eq:var-form}
  \langle\partial_t\rho(t),\varphi\rangle
  +\int_K\bigl(\nabla\rho+\rho\,b[\rho_t]\bigr)\cdot\nabla\varphi\dd x=0 .
\end{equation}
No boundary integral appears in \eqref{eq:var-form}; this encodes
$J\cdot n=0$ in the normal-trace sense.  Variational solutions belong to
$C([0,T];L^2(K))$ and conserve mass.
\end{definition}

\subsection{Main results}

Existence and uniqueness of penalized solutions is standard for fixed
$\lambda$ (Proposition~\ref{prop:pen-wp}); all constants below are uniform in
$\lambda\in(0,\lambda_0]$.  We state first the finite-time dynamical limit,
then the quantitative stationary limit, and finally a probabilistic
corollary identifying the limiting dynamics.

\begin{theorem}[Full-flux soft-to-hard limit and boundary reaction]
\label{thm:main}
Suppose that Assumptions~\ref{ass:geometry}, \ref{ass:coefficients}, and
\ref{ass:data} hold.  
Then the reflected problem
\eqref{eq:ref-pde} admits a unique variational solution $\rho$.
Moreover, if $\rho^\lambda$ is a penalized solution of
\eqref{eq:pen-pde} for each $\lambda>0$, then the following
statements hold for the entire family $(\rho^\lambda)$
as $\lambda\downarrow0$.
\begin{enumerate}[label=(\roman*),leftmargin=2.2em]
\item \emph{Density and time-marginal convergence.}
\[
  \rho^\lambda|_K\longrightarrow\rho
  \quad\text{strongly in }L^2((0,T)\times K),
  \qquad
  \rho^\lambda\longrightarrow\rho\one_K
  \quad\text{in }L^1((0,T)\times\Rd),
\]
\[
  \sup_{t\in[0,T]}
  \dBL\bigl(\rho_t^\lambda,\rho_t\one_K\bigr)\longrightarrow0. 
\]
Moreover,
$\sup_{t\le T}\int_{K^c}\rho_t^\lambda\dd x\le C\sqrt\lambda$.

\item \emph{Full-flux convergence.}
Let $J=\nabla\rho+\rho b[\rho_t]$. Then
\[
  J^\lambda\one_K\rightharpoonup J
  \quad\text{weakly in }L^2((0,T)\times K),
  \qquad
  \int_0^T\!\int_{K^c}|J^\lambda|\dd x\dd t
  \le C\lambda^{1/4}.
\]
{Consequently,
$J^\lambda\to\one_KJ$ in
$\mathcal D'((0,T)\times\Rd;\Rd)$.}

\item \emph{Boundary reaction.}
The vector measures
\[
  Q^\lambda:=\frac1\lambda\rho^\lambda(t,x)
  \bigl(x-\proj_Kx\bigr)\,\dd x\,\dd t
\]
have uniformly bounded total variation and converge weakly-$*$ in
$\mathcal M([0,T]\times\Rd;\Rd)$ to
\[
  Q:=\Tr(\rho)\,n\,\Hm\!\restriction_\Gamma\,\dd t.
\]
Likewise,
$q^\lambda:=\lambda^{-1}\dist(x)\rho^\lambda\dd x\dd t$
converges weakly-$*$ in $\mathcal M([0,T]\times\Rd)$ to
$\Tr(\rho)\Hm\!\restriction_\Gamma\dd t$.

\item \emph{Exact cancellation.}
{The three components of
$J^\lambda=\nabla\rho^\lambda+\rho^\lambda\nabla U_\lambda+\rho^\lambda
b[\rho^\lambda_t]$ converge separately in
$\mathcal D'((0,T)\times\Rd;\Rd)$:
\[
\begin{gathered}
  \nabla\rho^\lambda\to\nabla(\rho\one_K)
  =\one_K\nabla\rho-\Tr(\rho)n\,\Hm\!\restriction_\Gamma\dd t,\\
  \rho^\lambda\nabla U_\lambda\to\Tr(\rho)n\,\Hm\!\restriction_\Gamma\dd t,
  \qquad
  \rho^\lambda b[\rho^\lambda_t]\to\one_K\rho\,b[\rho_t].
\end{gathered}
\]
Thus the diffusive and penalty components carry singular parts on $\Gamma$
of equal size and opposite sign, which cancel in the limit
$J^\lambda\to\one_KJ$ of (ii),} while the remaining
interior flux has zero normal trace in the variational sense of
\eqref{eq:var-form}.  Here all gradients are spatial derivatives, with
Lebesgue measure in time understood.
\end{enumerate}
\end{theorem}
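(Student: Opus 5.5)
The proof rests on $\lambda$-uniform a priori estimates, then compactness, then identification of the limit; uniqueness for the reflected problem comes first and separately. For \emph{uniqueness}, take two variational solutions $\rho,\tilde\rho$. Test the difference of \eqref{eq:var-form} with $\rho-\tilde\rho$. Use $\|b[\rho_t]-b[\tilde\rho_t]\|_\infty\le L_W\|\rho_t-\tilde\rho_t\|_{L^1(K)}\le L_W|K|^{1/2}\|\rho_t-\tilde\rho_t\|_{L^2}$, together with an $L^\infty(0,T;L^2(K))$ bound on $\rho$. Absorbing the gradient term by Young's inequality and applying Gronwall gives $\rho=\tilde\rho$. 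Existence will be a byproduct of the limit $\lambda\to0$; since the limit is unique, the whole family converges, not just subsequences.

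\textbf{A priori estimates.} First I build a comparison supersolution for the linear equation with frozen drift $b\in\mathcal B$. It should look like $\bar\rho=C e^{\beta t}\exp\bigl(-\dist^2/(2\lambda)+a\,\dist/\ldots\bigr)$, a Gaussian tilted against the drift, and the Laplacian bound \eqref{eq:lap-dist-bound} lets one check the supersolution inequality. This gives $\rho^\lambda\le C\,e^{-\dist^2/(C\lambda)}$ uniformly, so $\int_{K^c}\rho^\lambda\le C\sqrt\lambda$ via the layer estimates of Lemma~\ref{lem:layer}. Next, test the equation for $\hh^\lambda$ with $\hh^\lambda$ in $L^2(\muh)$. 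This yields the energy bound
\[
\sup_t\|\hh^\lambda\|^2_{L^2(\muh)}+\int_0^T\!\!\int\wl|\nabla\hh^\lambda|^2\le C.
\]
In particular $\|\nabla\rho^\lambda\|_{L^2((0,T)\times K)}\le C$, and the dual norm of $\partial_t\rho^\lambda$ is bounded in $H^1(K)'$ up to exterior error terms. The exterior flux bound is then Cauchy--Schwarz:
\[
\int_{K^c}|\wl\nabla\hh^\lambda|\le\Bigl(\int\wl|\nabla\hh^\lambda|^2\Bigr)^{1/2}\Bigl(\int_{K^c}\wl\Bigr)^{1/2},\qquad \int_{K^c}\wl\lesssim\sqrt\lambda,
\]
which gives $O(\lambda^{1/4})$. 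The total variation of $Q^\lambda$ is $\int\rho^\lambda\dist/\lambda$, which is $O(1)$ by the Gaussian domination. These estimates, with a uniform-in-time modulus of continuity in $\dBL$ coming from the weak form, allow Aubin--Lions on $K$. We obtain strong $L^2((0,T)\times K)$ convergence along subsequences and uniform $\dBL$ convergence of the time marginals via Arzel\`a--Ascoli. The $L^1(\Rd)$ convergence follows by adding the $O(\sqrt\lambda)$ exterior mass.

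\textbf{Identification.} Pass to the limit in \eqref{eq:pen-weak} with test functions $\varphi\in C^\infty_c([0,T)\times\Rd)$. The interior part converges by weak convergence of $\nabla\rho^\lambda$ and strong convergence of $\rho^\lambda$, which also gives $b[\rho^\lambda_t]\to b[\rho_t]$ uniformly. The exterior flux vanishes by the $\lambda^{1/4}$ bound. The restrictions of $\varphi$ to $K$ are dense in $H^1(K)$, so the limit is a variational solution and uniqueness pins it down. This proves (i) and (ii); the distributional limit $J^\lambda\to\one_KJ$ follows immediately.

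\textbf{Boundary reaction (main obstacle).} The obstacle is computing the limit of $Q^\lambda$ itself, not just of the sum $J^\lambda$. I would use the decomposition $\rho^\lambda\nabla U_\lambda=J^\lambda-\nabla\rho^\lambda-\rho^\lambda b$. Distributionally, $\nabla\rho^\lambda\to\nabla(\rho\one_K)$ because $\rho^\lambda\to\rho\one_K$ in $L^1$. Gauss--Green for $H^1(K)$ functions gives $\nabla(\rho\one_K)=\one_K\nabla\rho-\Tr(\rho)n\,\Hm\restriction_\Gamma$. Combined with (ii), this gives $Q^\lambda\to Q$ in $\mathcal D'$. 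The uniform TV bound and the $O(e^{-c/\lambda})$ tails upgrade this to weak-$*$ convergence on $[0,T]\times\Rd$; continuity of test functions in $t$ up to the endpoints is handled by the TV bound. This also yields (iv).

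For the scalar measures $q^\lambda$, the natural route is normal factorization: $Q^\lambda=n(x)\,q^\lambda$ with $n=\nabla\dist$ continuous on $\{0<\dist<\delta_0\}$ and extending to $n\circ\proj_K$. Any weak-$*$ limit $q$ of $q^\lambda$, which exists by the TV bound, is supported on $\Gamma$ and satisfies $nq=Q=n\,\Tr(\rho)\Hm$. Taking the inner product with $n$ gives $q=\Tr(\rho)\Hm\restriction_\Gamma\dd t$. The delicate point is justifying that $n$ may be replaced by $n\circ\proj_K$, which is continuous on $\Rd$. This is legitimate because $|n(x)-n(\proj_Kx)|=0$ off $K$.
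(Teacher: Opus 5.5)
\textbf{Verdict: genuine gap in the barrier.} Everything except the barrier matches the paper's proof: Gr\"onwall uniqueness, the conjugated energy estimate for $\hh^\lambda$, the Cauchy--Schwarz $O(\lambda^{1/4})$ exterior-flux bound, Aubin--Lions on $K$, identification by density of restrictions, $Q^\lambda=J^\lambda-\nabla\rho^\lambda-\rho^\lambda b[\rho^\lambda_t]$ with Gauss--Green, and normal factorization for $q^\lambda$. Two small corrections there:
\begin{itemize}
\item For Aubin--Lions, use the bound on $\partial_t\rho^\lambda$ in $L^2(0,T;H^1_0(K)')$ from tests in $C_c^\infty(\operatorname{int}K)$. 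An $H^1(K)'$ bound would involve the normal trace of $J^\lambda$ on $\Gamma$, which you do not control.
\item $n\circ\proj_K$ is undefined on $\operatorname{int}K$, so test with a continuous extension such as $\chi\nabla d_s$.
\end{itemize}
The barrier matters because it is your only source for the $C\sqrt\lambda$ exterior mass in (i). It also supplies the uniform-in-time moments you use to bound and localize $Q^\lambda$.

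\textbf{Why a tilt in $\dist$ alone fails.} Such a tilt vanishes on $K$.
\begin{itemize}
\item If its outer slope at $\Gamma$ is $a>0$, then $\nabla\log\bar\rho$ jumps from $0$ to $an$ across $\Gamma$. The flux $\nabla\bar\rho+\bar\rho\nabla U_\lambda+\bar\rho b$ is $\bar\rho(an+b)$ just outside and $\bar\rho b$ inside, so its normal component jumps by $a\bar\rho$. Hence $\partial_t\bar\rho-\divg(\cdots)$ contains the negative surface measure $-a\bar\rho\,\Hm\!\restriction_\Gamma$. This convex kink has the wrong sign for a supersolution, and the a.e.\ bound \eqref{eq:lap-dist-bound} on $K^c$ never sees it.
\item If the outer slope vanishes at $\Gamma$, the layer term $-(\dist/\lambda)\,n\cdot b$ reaches $B_0\dist/\lambda$ when $n\cdot b=-B_0$. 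It is compensated only if the slope exceeds $B_0$ within distance $O(\lambda)$. That forces the tilt's Laplacian, which enters with a plus sign, to be of order $\lambda^{-1}$.
\end{itemize}

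\textbf{The missing idea} is to continue the tilt into $K$ at unit depth. Take $\bar R=Me^{\gamma t}e^{\beta\zeta}\wl$ with $\zeta=\theta\circ d_s$, $\beta=B_0+1$, where:
\begin{itemize}
\item $\nabla\zeta=n$ on $K^c$ and on an interior collar;
\item $\zeta$ is constant at depth $\ge\delta_0$, so $|\Delta\zeta|\le C_\zeta$.
\end{itemize}
The flux is then $\bar R(\beta\nabla\zeta+b)$, with no jump on $\Gamma$. On $K^c$ the supersolution condition becomes
\[
(\beta-\dist/\lambda)(\beta+n\cdot b)+\beta\Delta\zeta+\divg b\le\gamma,
\]
which holds since $\beta+n\cdot b\ge1$; on $K$ the corresponding expression is bounded. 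The comparison must be run weakly on the ratio $\rho^\lambda/(e^{\beta\zeta}\wl)$ with spatial cutoffs.

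Without any barrier you can still bound $q^\lambda([0,T]\times\Rd)$ by testing the equation with $\zeta$. The energy estimate then gives exterior mass $O(\lambda^{1/4})$, so (ii)--(iv) survive, but the stated $\sqrt\lambda$ rate does not.
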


\medskip
For the stationary theory, set
$(W*\rho)(x):=\int_{\Rd}W(x,y)\rho(y)\dd y$ and define the penalized and
hard Gibbs maps
\begin{equation}\label{eq:fixed-point}
\begin{aligned}
  \mcT_\lambda(\rho)(x)
  &:=
  \frac{e^{-U_\lambda(x)-\Phi(x)-(W*\rho)(x)}}
  {\int_{\Rd}e^{-U_\lambda-\Phi-W*\rho}\dd x},
  &&\lambda>0,\\
  \mcT_0(\rho)(x)
  &:=
  \frac{\one_K(x)e^{-\Phi(x)-(W*\rho)(x)}}
  {\int_K e^{-\Phi-W*\rho}\dd x}.
\end{aligned}
\end{equation}
The gauge-invariant interaction coefficient used below is
\begin{equation}\label{eq:dob-def}
  \dob(W):=\sup_{x,x'\in\Rd}\frac12\,
  \osc_{y\in\Rd}\Bigl(W(x,y)-W(x',y)\Bigr),
  \qquad
  \kappa:=\tfrac12\dob(W).
\end{equation}

\begin{theorem}[Quantitative stationary soft-to-hard limits]
\label{thm:stationary-main}
Let Assumptions~\ref{ass:geometry} and \ref{ass:coefficients} hold.
Assume in addition that $\Phi\in C_b(\mathbb R^d)$ and
either (a) $\kappa<1$, or (b) $W$ is positive semidefinite:
$\iint W(x,y)g(x)g(y)\dd x\dd y\ge0$ for every $g\in L^1(\Rd)$.
Then
\begin{enumerate}[label=(\roman*),leftmargin=2.2em]

\item \emph{Fixed points and sharp total-variation rate.}
For every $\lambda>0$, $\mathcal T_\lambda$ and $\mathcal T_0$
have unique fixed points $\pi_\lambda$ and $\pi$ among probability
densities. Moreover, there exist constants
$c_1=c_1(K,d,\|\Phi\|_\infty,\|W\|_\infty)>0$ and
$C_1=C_1(K,d,\|\Phi\|_\infty,\|W\|_\infty)>0$,
such that for every $\lambda\in(0,1]$,
\begin{equation}
\label{pi_lambda--pi--L1}
c_1\sqrt{\lambda}
\le
\|\pi_\lambda-\pi\|_{L^1(\mathbb R^d)}
\le
{
\begin{cases}
C_1(1-\kappa)^{-1}\sqrt\lambda, & \text{in case (a)},\\
C_1\sqrt\lambda, & \text{in case (b)}.
\end{cases}}
\end{equation}
In particular, $\pi$ is the unique stationary variational solution
of \eqref{eq:ref-pde}.

\item \emph{Sharp quadratic-Wasserstein rate.}
There exists
${C_2=C_2(K,d,\|\Phi\|_\infty,\|W\|_\infty)>0}$
such that
\[
  W_2(\pi_\lambda,\pi)
  \le
  {
  \begin{cases}
    C_2(1-\kappa)^{-1}\sqrt\lambda, & \text{in case (a)},\\
    C_2\sqrt\lambda, & \text{in case (b)},
  \end{cases}}
  \qquad 0<\lambda\le1.
\]
Moreover, there exist $c_2>0$ and $\lambda_*\in(0,1]$
{depending only on $K,d,\|\Phi\|_\infty,\|W\|_\infty$, and,
in case (a), $(1-\kappa)^{-1}$,} such that
\[
  W_2(\pi_\lambda,\pi)
  \ge c_2\sqrt{\lambda},
  \qquad 0<\lambda\le\lambda_*.
\]

\end{enumerate}
\end{theorem}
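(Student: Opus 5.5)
The plan is to treat $\mcT_\lambda$ and $\mcT_0$ as perturbations of one another, using a one-potential comparison supplied by the Gaussian layer estimates (Lemma~\ref{lem:layer}). Fix a density $\rho$. Since $\Phi$ and $W*\rho$ are bounded by $\|\Phi\|_\infty+\|W\|_\infty$, the numerators of $\mcT_\lambda(\rho)$ and $\mcT_0(\rho)$ coincide on $K$. The only difference is the exterior layer $\int_{K^c}e^{-U_\lambda-\Phi-W*\rho}\dd x$. By the layer lemma this equals $\sqrt{\pi\lambda/2}\,\int_\Gamma e^{-\Phi-W*\rho}\dd\Hm\,(1+O(\sqrt\lambda))$, which lies between $c\sqrt\lambda$ and $C\sqrt\lambda$. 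This gives
\[
\|\mcT_\lambda(\rho)-\mcT_0(\rho)\|_{L^1}\le C\sqrt\lambda
\]
uniformly in $\rho$. It also gives a two-sided pointwise bound $e^{-C}\le\mcT_\lambda(\rho)\,/\,\wl\le e^{C}$.

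\emph{Fixed points.} Existence for both maps follows from Schauder's theorem. The images are equicontinuous on compacts, uniformly bounded, and tight, and $\rho\mapsto W*\rho$ is continuous from $L^1$ into $C_b$.

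For uniqueness in case (a), I would prove the Dobrushin contraction
\[
\|\mcT_\lambda(\rho)-\mcT_\lambda(\rho')\|_{L^1}\le\kappa\|\rho-\rho'\|_{L^1},
\]
and the same for $\mcT_0$. For a fixed reference measure, the Gibbs map $V\mapsto e^{-V}/Z$ satisfies $\|\cdot\|_{L^1}\le\tfrac12\osc V$, and $\osc\bigl(W*(\rho-\rho')\bigr)\le\dob(W)\,\tfrac12\|\rho-\rho'\|_{L^1}$ by the definition \eqref{eq:dob-def}. The upper $L^1$ bound in (a) then follows from the triangle inequality:
\[
\|\pi_\lambda-\pi\|_{L^1}\le\kappa\|\pi_\lambda-\pi\|_{L^1}+\|\mcT_\lambda(\pi)-\mcT_0(\pi)\|_{L^1}.
\]

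In case (b), fixed points are exactly the critical points of the free energy
\[
\mathcal F_\lambda(\rho)=\int\rho\log\rho+\int(\Phi+U_\lambda)\rho+\tfrac12\langle\rho,W\rho\rangle,
\]
which is strictly convex when $W$ is positive semidefinite; hence the fixed point is unique.

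For the rate in case (b), set $g=\pi_\lambda-\pi$ and $\nu=\mcT_\lambda(\pi)$. I would compute $\mathrm{KL}(\pi|\pi_\lambda)+\mathrm{KL}(\pi_\lambda|\nu)$ from the explicit logarithms. The cross terms produce $-\langle g,Wg\rangle\le0$, and the normalizing constants produce $\log(1+O(\sqrt\lambda))$. This gives an a priori bound $\|g\|_{L^1}\le C\lambda^{1/4}$ via Pinsker. To upgrade it to $\sqrt\lambda$, I would linearize the fixed-point relation as $g=-P_\pi(W*g)+r$. Here $P_\pi$ is the covariance operator $f\mapsto\pi\,(f-\int f\pi)$, which is positive in $L^2(\pi)$. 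The remainder $r$ has $\|r\|_{L^1}\le C\sqrt\lambda+C\|g\|_{L^1}^2$, which is $O(\sqrt\lambda)$ by the a priori bound. The exterior part of $r$ is handled explicitly, since $\pi$ vanishes there. The operator $I+P_\pi W$ is similar to $I+P_\pi^{1/2}WP_\pi^{1/2}$, which is $\ge I$, so its inverse is bounded uniformly. I expect this upgrade to be the main technical obstacle: the inverse must be transferred from a weighted $L^2$ space to $L^1$, which will use the smoothness of $W*g$.

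The lower $L^1$ bound is immediate. $\pi$ vanishes on $K^c$, while $\pi_\lambda(K^c)\ge c\sqrt\lambda$ by the two-sided pointwise bound and the layer lemma. Stationarity of $\pi$ for \eqref{eq:ref-pde} follows because the Gibbs form makes the flux $\nabla\pi+\pi b[\pi]$ vanish identically. Conversely, any stationary variational solution satisfies $\int_K\pi|\nabla\log(\pi e^{\Phi+W*\pi})|^2=0$, so it is a fixed point of $\mcT_0$.

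\emph{$W_2$ upper bound.} I would build an intermediate measure $\mu$ on $K$ by pushing $\pi_\lambda\one_{K^c}$ forward under $\proj_K$. The transport cost of this step is $\int_{K^c}\dist^2\pi_\lambda\le C\lambda^{3/2}$. It remains to bound $W_2(\mu,\pi)$, which I would do by Peyre's inequality $W_2(\mu,\pi)\le2\|\mu-\pi\|_{\dot H^{-1}(\pi)}$. This is legitimate because $\pi\ge c>0$ on the convex set $K$, so a Poincar\'e inequality holds there. The difference $\mu-\pi$ splits into two pieces. The first is a boundary measure of mass $O(\sqrt\lambda)$; by the trace inequality its $H^1(K)'$ norm is $O(\sqrt\lambda)$. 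The second is the interior density difference. On $K$ we have $\pi_\lambda/\pi=(Z/Z_\lambda)e^{-W*g}$, so by the $L^1$ bound this difference is $O(\sqrt\lambda)$ in $L^\infty(K)$. The constants in case (a) carry the factor $(1-\kappa)^{-1}$ inherited from $\|g\|_{L^1}$.

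\emph{$W_2$ lower bound.} A single exterior-mass argument only gives $\lambda^{3/4}$, so I would argue through first-order asymptotics instead. I would show that $\sigma_\lambda:=\lambda^{-1/2}(\pi_\lambda-\pi)$ converges in the dual of Lipschitz functions to
\[
\sigma=\sqrt{\pi/2}\,\pi\,\Hm\!\restriction_\Gamma+h\,\dd x.
\]
Here $h$ is the $L^\infty(K)$ first-order interior correction, obtained from the linearized fixed-point equation and the expansion of the normalizing constant. The limit $\sigma$ is nonzero because its singular part on $\Gamma$ is nonzero, and it cannot be cancelled by the absolutely continuous part. Choosing a Lipschitz test function $\varphi$ with $\int\varphi\dd\sigma>0$ gives $W_2\ge W_1\ge\sqrt\lambda\int\varphi\dd\sigma_\lambda\ge c_2\sqrt\lambda$ for $\lambda\le\lambda_*$. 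The compactness and linearization constants determine $\lambda_*$, which in case (a) depends on $(1-\kappa)^{-1}$.
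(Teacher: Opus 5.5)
Most of your plan follows the paper. The shared parts are the Dobrushin contraction with the triangle inequality in case (a), the exterior-mass lower bound in $L^1$, and stationarity via the Gibbs form. The $W_2$ upper bound is also the same: project the exterior layer onto $\Gamma$, estimate in $\dot H^{-1}(K)$, and apply a Peyre/Benamou--Brenier inequality (the paper proves its own variant, Lemma~\ref{lem:Hminus1-transport}, which tolerates a boundary part).

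A few repairs are needed in these steps.
\begin{itemize}
\item The oscillation bound is $\osc(W*z)\le\dob(W)\|z\|_{L^1}$, not $\tfrac12\dob(W)\|z\|_{L^1}$. Combined with the $\tfrac12\osc$ Gibbs bound, this gives exactly $\kappa$.
\item In the trace step, mass $O(\sqrt\lambda)$ is not enough: for $d\ge2$ a point mass on $\Gamma$ is not in $H^1(K)'$. You need the surface-density bound $\sigma_\lambda\le C\sqrt\lambda\,\Hm\restriction_\Gamma$. It follows from your pointwise bound $\pi_\lambda\le Ce^{-\dist^2/2\lambda}$ and the tube formula (Lemma~\ref{lem:projected-layer}).
\item For stationary uniqueness, test with $v=\bar\rho/g$, where $g$ is the frozen Gibbs density. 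A logarithm of $\bar\rho$ need not lie in $H^1(K)$.
\end{itemize}

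In case (b), your KL-plus-linearization route works, and the obstacle you anticipate is not real. The linearization remainder is $O(\sqrt\lambda)$ in $L^\infty(K)$, not just in $L^1$, so the $L^2(\pi)$ inversion already controls $L^1$. The paper does it in one step, without the a priori $\lambda^{1/4}$ bound. It sets $\tilde\pi_\lambda=\pi_\lambda\one_K/(1-\beta_\lambda)$ and subtracts the two log-Gibbs identities on $K$. Testing with $\tilde\pi_\lambda-\pi$ and using positive semidefiniteness with $(\log a-\log b)(a-b)\ge(a-b)^2/\max(a,b)$ gives $\|\tilde\pi_\lambda-\pi\|_{L^2(K)}\le C\beta_\lambda$.

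The genuine gap is the $W_2$ lower bound. Your route through convergence of $\lambda^{-1/2}(\pi_\lambda-\pi)$ to a limit $\sigma$ is only qualitative. The threshold $\lambda_*$ is wherever $\int\varphi\,\dd\sigma_\lambda$ comes within half of $\int\varphi\,\dd\sigma$, and that depends on an unquantified convergence rate. Already the boundary-layer asymptotics use the modulus of continuity of $\Phi$ near $\Gamma$. Convergence of the interior part to $h$ would itself require solving the linearized problem with remainder control. Moreover, $c_2$ depends on the particular limit. At best you get constants for each fixed $(\Phi,W)$, not constants depending only on $K,d,\|\Phi\|_\infty,\|W\|_\infty$ and $(1-\kappa)^{-1}$, as the theorem asserts.

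No asymptotics are needed. Test with the $1$-Lipschitz function $\varphi=(t_*-\dist(\cdot,K^c))_+$, which equals $t_*$ on $K^c$ and vanishes at depth $t_*$.
\begin{itemize}
\item The exterior contributes $t_*\beta_\lambda\ge t_*b_0\sqrt\lambda$.
\item The interior deficit enters only through the strip $A_{t_*}=\{x\in K:\dist(x,K^c)\le t_*\}$, of volume at most $\Hm(\Gamma)t_*$.
\item On that strip $\pi_\lambda-\pi$ is $O(\sqrt\lambda)$ pointwise. The paper gets this from $\osc_K\log(\pi_\lambda/\pi)\le\dob(W)\|\pi_\lambda-\pi\|_{L^1}$, i.e.\ $|\pi_\lambda/\pi-(1-\beta_\lambda)|\le C_A\sqrt\lambda$ on $K$. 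Your own estimate $\|\pi_\lambda-\pi\|_{L^\infty(K)}\le C\sqrt\lambda$ from the upper bound serves equally well.
\end{itemize}
Taking $t_*$ small makes the strip term at most half the exterior term. Kantorovich--Rubinstein duality then gives $W_2\ge W_1\ge\tfrac12b_0t_*\sqrt\lambda$, with explicit constants of the stated dependence. Your remark that the layer alone gives only $\lambda^{3/4}$ is correct for its transport cost. But a test function constant on $K^c$ sees the entire exterior mass with a lever arm of fixed length $t_*$.
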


\medskip
For comparison, in the kernel-free case (i.e., $W\equiv0$), we obtain the following complementary result.
\begin{proposition}[Kernel-free Gibbs measures]\label{prop:kernel--free}
Let Assumption~\ref{ass:geometry} hold.
Set $W\equiv0$, $\Phi=f$, with $f\in C^2(\Rd)$ and
$\inf_{\Rd}f>-\infty$.  Let
\[
  Z_\lambda:=\int_{\Rd}e^{-f-U_\lambda}\dd x,\qquad
  Z:=\int_K e^{-f}\dd x,\qquad
  T_\lambda:=\int_{K^c}e^{-f-U_\lambda}\dd x,
\]
and
\[
\begin{aligned}
\mu_{\lambda,\infty}(\dd x)&=\rli(x)\dd x,
&\rli&:=Z_\lambda^{-1}e^{-f-U_\lambda},\\
\mu_\infty(\dd x)&=\ri(x)\dd x,
&\ri&:=Z^{-1}e^{-f}\one_K.
\end{aligned}
\]

Then there exist constants $c,C>0$ such that, for every
$\lambda\in(0,1]$,
\begin{equation}
\label{eq:linear-partition-sharp}
Z_\lambda=Z+T_\lambda,
\qquad
c\sqrt{\lambda}
\leq
T_\lambda
\leq
C\sqrt{\lambda}.
\end{equation}
Moreover,
\begin{equation}
\label{eq:linear-tv-exact}
\|\rli-\ri\|_{L^1(\mathbb R^d)}
=
\frac{2T_\lambda}{Z_\lambda}.
\end{equation}
As a result,
\[
c\sqrt{\lambda}
\leq
\|\rli-\ri\|_{L^1(\mathbb R^d)}
\leq
C\sqrt{\lambda},
\]
and the same two-sided rate holds for the total-variation distance.
Finally,
\begin{equation}
\label{eq:linear-Wasserstein-sharp}
c\sqrt{\lambda}
\leq
W_1(\mu_{\lambda,\infty},\mu_\infty)
\leq
W_2(\mu_{\lambda,\infty},\mu_\infty)
\leq
C\sqrt{\lambda}.
\end{equation}   
\end{proposition}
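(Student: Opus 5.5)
The partition identity is immediate. Since $U_\lambda\equiv0$ on $K$, we have $\int_K e^{-f-U_\lambda}\dd x=Z$, and hence $Z_\lambda=Z+T_\lambda$.

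For the two-sided bound on $T_\lambda$, I would use exterior normal coordinates. On the collar $\{0<\dist<\delta_0\}$ the map $(y,r)\mapsto y+rn(y)$, with $y\in\Gamma$ and $r\in(0,\delta_0)$, is a $C^1$ diffeomorphism. Its Jacobian $\prod_i(1+r\kappa_i(y))$ lies between $1$ and $1+C_K^* r$ for $r\le\delta_0$, because $K$ is convex and the curvatures are bounded. On this collar $f$ is bounded above and below, since the collar is compact and $f$ is continuous. We then get
\[
T_\lambda\ \ge\ e^{-\max f}\,\Hm(\Gamma)\int_0^{\delta_0}e^{-r^2/2\lambda}\dd r\ \ge\ c\sqrt\lambda .
\]
For the upper bound on the collar we use $e^{-\inf f}$ together with $\int_0^\infty(1+Cr)e^{-r^2/2\lambda}\dd r\le C\sqrt\lambda$. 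Outside the collar, the bound $\inf f>-\infty$ and the layer-counting estimate (Lemma~\ref{lem:layer}) give
\[
\int_{\dist\ge\delta_0}e^{-\dist^2/2\lambda}\dd x\le Ce^{-c/\lambda},
\]
since the volume of $\{\dist\le r\}$ grows polynomially in $r$ (Steiner formula). Combining the two regions gives $c\sqrt\lambda\le T_\lambda\le C\sqrt\lambda$ for $\lambda\le1$.

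For the $L^1$ identity, note that on $K$ we have $\rli=e^{-f}/Z_\lambda\le e^{-f}/Z=\ri$. The difference therefore has a constant sign on $K$, and
\[
\int_K|\rli-\ri|=1-Z/Z_\lambda=T_\lambda/Z_\lambda .
\]
On $K^c$ only $\rli$ is present, and it contributes $T_\lambda/Z_\lambda$. Adding the two contributions gives \eqref{eq:linear-tv-exact}. Because $Z\le Z_\lambda\le Z+C$, the two-sided $L^1$ and TV rates follow.

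For the Wasserstein upper bound, I would construct an explicit coupling. Transport the exterior mass by $\proj_K$. This costs
\[
\int_{K^c}\dist^2\rli\le C\lambda^{3/2},
\]
by the same normal-coordinate computation with an extra factor $r^2$. It leaves the measure $\rli\one_K+(\proj_K)_\#(\rli\one_{K^c})$, which has mass $Z_\lambda^{-1}Z$ in the interior plus a surface layer of mass $T_\lambda/Z_\lambda\sim\sqrt\lambda$ on $\Gamma$. This measure must then be coupled to $\ri$. The cheap way is to move the whole defect mass $O(\sqrt\lambda)$ across distance $\operatorname{diam}K$, which gives $W_2^2\le C\sqrt\lambda$ and hence only $W_2\lesssim\lambda^{1/4}$. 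That is the main obstacle. To get the sharp $\sqrt\lambda$ rate, the redistribution step should instead be estimated through $W_2(\mu,\nu)\le 2\|\mu-\nu\|_{\dot H^{-1}(K,\nu)}$ (Peyre), with $\nu=\ri$, which has density bounded below on $K$. The signed measure $\sigma=(\proj_K)_\#(\rli\one_{K^c})-(T_\lambda/Z_\lambda)\ri$ has total mass zero and size $O(\sqrt\lambda)$. Its surface part has bounded density with respect to $\Hm$, because after projection the fibre mass is $\sqrt\lambda\,e^{-f(y)}(1+O(\sqrt\lambda))$. By the trace inequality $\|\Tr\varphi\|_{L^2(\Gamma)}\le C\|\varphi\|_{H^1(K)}$ and the Poincaré–Wirtinger inequality on $K$, we get $\|\sigma\|_{\dot H^{-1}}\le C\sqrt\lambda$. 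Since a pure surface measure is not absolutely continuous, I would first smear the surface part over an interior layer of width $\sqrt\lambda$, at cost $O(\lambda^{3/4})$ in $W_2$, and then apply the Benamou–Brenier argument. The resulting $W_2$ estimate is $O(\sqrt\lambda)$.

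For the lower bound, $W_1$ is tested against the $1$-Lipschitz function $\dist$. This gives
\[
W_1\ge\int_{K^c}\dist\,\rli\ge c\lambda ,
\]
which is too weak. I would instead use $\varphi=\dist(\cdot,\Gamma)$ signed, truncated as $\varphi(x)=\min(\operatorname{dist}(x,\Gamma),\delta)$ on $K$ and $0$ outside. Then
\[
\int\varphi\,\dd(\ri-\rli)=(1-Z/Z_\lambda)\int_K\varphi\,\ri\ \ge\ c\,T_\lambda\ \ge\ c\sqrt\lambda ,
\]
because $\ri-\rli=(T_\lambda/Z_\lambda)\ri\ge0$ on $K$ and $\varphi$ vanishes off $K$. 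This gives $c\sqrt\lambda\le W_1\le W_2$.
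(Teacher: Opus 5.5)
Your proposal is correct and follows essentially the same route as the paper. The shared steps are: the split $Z_\lambda=Z+T_\lambda$ with a Gaussian-layer/Steiner bound; the sign argument on $K$ for the exact $L^1$ identity; projection of the exterior layer onto $\Gamma$ followed by an $\dot H^{-1}$/Benamou--Brenier redistribution for the $W_2$ upper bound; and a Lipschitz test against the depth $\dist(\cdot,K^c)$ for the $W_1$ lower bound. The one difference is that your smearing of the surface layer is unnecessary. The paper's Lemma~\ref{lem:Hminus1-transport}, like Peyre's bound taken with the absolutely continuous reference measure $\ri$, already handles a boundary singular part of density $O(\sqrt\lambda)$ directly.
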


\medskip
The probabilistic counterpart of \eqref{eq:ref-pde} is the reflected
McKean--Vlasov SDE
\begin{equation}\label{eq:rmv-sde}
  \dd X_t=-b[\mu_t](X_t)\dd t+\sqrt2\dd B_t-n(X_t)\dd L_t,
  \qquad
  \mu_t=\mathrm{Law}(X_t),\qquad X_t\in K,
\end{equation}
Here $L$ is the continuous nondecreasing regulator with $L_0=0$ and
$\int_0^T\one_{\{X_t\notin\Gamma\}}\dd L_t=0$; its normalization is fixed
by \eqref{eq:rmv-sde}.  We refer to $L$ as the boundary local time.
For each $\lambda>0$, let
$X^\lambda$ denote the penalized McKean--Vlasov diffusion whose marginal
densities are $\rho^\lambda$, as in Proposition~\ref{prop:pen-wp}.

\begin{corollary}[Probabilistic soft-to-hard limit and averaged boundary reaction]
\label{cor:probabilistic}\label{prop:marginal}
Under the same assumptions as in Theorem \ref{thm:main}, the reflected SDE
\eqref{eq:rmv-sde} has a unique solution with
$\mathrm{Law}(X_0)=\rho_0$, and the following statements hold.
\begin{enumerate}[label=(\roman*),leftmargin=2.2em]
\item \emph{Identification of the limiting law.}
For every $t\in[0,T]$,
\[
  \mathrm{Law}(X_t)=\rho_t\,\dd x,
\]
where $\rho$ is the unique variational solution in
Theorem~\ref{thm:main}.

\item \emph{Convergence of time marginals.}
Regarding $\mathrm{Law}(X_t)$ as a probability measure on $\Rd$ supported
in $K$,
\[
  \sup_{t\in[0,T]}
  \dBL\bigl(\mathrm{Law}(X_t^\lambda),\mathrm{Law}(X_t)\bigr)
  \longrightarrow0.
\]
Moreover, the marginal densities satisfy
\[
  \rho^\lambda|_K\longrightarrow\rho
  \quad\text{strongly in }L^2((0,T)\times K),
  \qquad
  \rho^\lambda\longrightarrow\rho\one_K
  \quad\text{in }L^1((0,T)\times\Rd).
\]

\item \emph{Averaged penalty-force occupation measures.}
The vector measures
\[
  \mathsf Q^\lambda(\dd t,\dd x)
  :=\frac{x-\proj_Kx}{\lambda}\,
  \mathrm{Law}(X_t^\lambda)(\dd x)\,\dd t
\]
converge weakly-$*$ to
\[
  \Tr(\rho)\,n\,\Hm\!\restriction_\Gamma\,\dd t.
\]
\end{enumerate}
\end{corollary}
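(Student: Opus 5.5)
The plan is to treat the corollary as a translation of Theorem~\ref{thm:main} into probabilistic language. The one genuinely new ingredient is the identification of the law of the reflected SDE with the variational solution.

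\textbf{Well-posedness of \eqref{eq:rmv-sde}.} First I will prove well-posedness by a fixed point on flows of measures. For a given continuous curve $\mu\in C([0,T];\mcP(K))$, the drift $b[\mu_t]$ lies in the class $\mathcal B$ of \eqref{eq:drift-class}. It is therefore bounded, Lipschitz in $x$, and continuous in $t$. The Tanaka/Lions--Sznitman theory for convex $K$ then gives a unique strong solution $X^\mu$ of the reflected SDE with prescribed drift, driven by the given $B$ and $X_0$.

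By the synchronous coupling of two such solutions, the reflection terms contribute a nonpositive quantity. This follows from monotonicity of the normal cone: $(x-y)\cdot n(x)\dd L^x\ge0$ and $(x-y)\cdot n(y)\dd L^y\le0$ for $x,y\in K$. Combined with \eqref{eq:drift-W1}, this yields
\[
\mathbb E\sup_{s\le t}|X^\mu_s-X^\nu_s|\le C\int_0^t W_1(\mu_s,\nu_s)\dd s .
\]
Picard iteration then gives existence and uniqueness in law, as in Sznitman's argument.

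\textbf{Part (i): identification.} Set $\mu_t=\mathrm{Law}(X_t)$. For $\varphi\in C^2(K)$ with $\nabla\varphi\cdot n=0$ on $\Gamma$, It\^o's formula kills the $\dd L$ term. So $\mu$ solves the linear Neumann Fokker--Planck equation with the frozen drift $\beta(t,x):=b[\mu_t](x)$ in the weak sense against such test functions.

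On the other hand, that linear problem has a unique variational solution $\tilde\rho$ with datum $\rho_0$, by the Lions--Magenes framework, since $\beta$ is bounded. A duality argument shows that $\mu_t=\tilde\rho_t\dd x$. Concretely, I solve the backward Kolmogorov equation $\partial_s u+\Delta u-\beta\cdot\nabla u=0$ with Neumann condition and terminal datum $\psi$. Its solution is regular enough, at least after mollifying $\beta$ in time and passing to the limit, to be used in It\^o's formula for $X$, and to be paired with $\tilde\rho$ via \eqref{eq:var-form}. Both pairings give $\int u(0)\rho_0$, so $\int\psi\dd\mu_t=\int\psi\tilde\rho_t$.

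Hence $\mu_t$ has density $\tilde\rho_t$, and this density solves \eqref{eq:var-form} with $b[\rho_t]$ in place of $\beta$. By the uniqueness statement of Theorem~\ref{thm:main}, $\tilde\rho=\rho$.

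I expect this duality and regularity step to be the main obstacle. The issue is that $\beta$ is only measurable in $t$, so justifying It\^o's formula for the dual solution up to the boundary needs care. My first attempt will be to use the continuity of $t\mapsto\mu_t$ in $W_1$, which makes $\beta$ continuous in $t$ with $C^1_b$ dependence on $x$. Classical Schauder theory for the Neumann problem on a $C^2$ domain then applies directly.

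\textbf{Parts (ii) and (iii).} By Proposition~\ref{prop:pen-wp}, $\mathrm{Law}(X^\lambda_t)=\rho^\lambda_t\dd x$. Parts (ii) and (iii) are then literally Theorem~\ref{thm:main}(i) and (iii), rewritten using $\mathrm{Law}(X_t)=\rho_t\one_K\dd x$ from (i). Indeed, $\mathsf Q^\lambda$ coincides with $Q^\lambda$ as a measure.
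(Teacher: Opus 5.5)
Your well-posedness argument is Sznitman's coupling/Picard scheme, which the paper simply cites. Your parts (ii)--(iii) are exactly the paper's: $\mathrm{Law}(X^\lambda_t)=\rho^\lambda_t\dd x$, Fubini giving $\mathsf Q^\lambda=Q^\lambda$, and then Theorem~\ref{thm:main}(i),(iii).

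The real difference is in part (i). The paper never uses a backward equation. Proposition~\ref{prop:pathwise} combines S{\l}omi\'nski's prescribed-drift penalization estimate with monotone synchronous coupling. It gives $\mathbb E|X^{1,\lambda}_t-X^1_t|\le C(\lambda\log(1/\lambda))^{1/4}$, hence $\mathrm{Law}(X^\lambda_t)\to\mathrm{Law}(X_t)$ narrowly. Theorem~\ref{thm:main}(i) gives $\rho^\lambda_t\to\rho_t\one_K$ narrowly for every $t$. Uniqueness of narrow limits then forces $\mathrm{Law}(X_t)=\rho_t\dd x$.

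Your duality route is intrinsic to the reflected problem and independent of any penalization estimate. Its conclusion is sound: the density $\tilde\rho$ solves the nonlinear problem with drift $b[\tilde\rho_t]$ (not $b[\rho_t]$, as you wrote), and Proposition~\ref{prop:ref-unique} finishes.

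Its cost is the step you flag, and your proposed fix is not right as stated. Schauder estimates up to the boundary for the Neumann problem require $\partial K\in C^{2,\alpha}$. They also need a drift that is H\"older, not merely continuous, in time. Assumption~\ref{ass:geometry} gives only $C^2$.

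Time-H\"older regularity is in fact available: reflection is monotone, which gives $\mathbb E|X_t-X_s|^2\le C|t-s|$. Hence $W_1(\mu_t,\mu_s)\le C|t-s|^{1/2}$, and \eqref{eq:drift-W1} makes $\beta$ H\"older in $t$. The boundary needs more work:
\begin{itemize}
\item use $W^{2,1}_p$ estimates for the Neumann problem, which do hold on $C^2$ domains;
\item take smooth approximants $u_k$ with $\nabla u_k\to\nabla u$ uniformly, so the $\dd L$ term still vanishes in the limit because $\mathbb E L_t<\infty$;
\item prove an occupation-time (Krylov-type) bound for the reflected process, for instance via Girsanov and Neumann heat-kernel bounds, to pass to the limit in the $\dd s$ term.
\end{itemize}

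The paper's route avoids all of this. It also yields the quantitative rate $\sup_{t\le T}W_1(\mathrm{Law}(X^\lambda_t),\mathrm{Law}(X_t))\le C(\lambda\log(1/\lambda))^{1/4}$. The price is importing an external pathwise penalization theorem and the coupling setup of Proposition~\ref{prop:pathwise}.
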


The corollary concerns time marginals and averaged force-occupation
measures.  {Path-space convergence, convergence of the
accumulated penalty process to $\int_0^\cdot n(X_s)\dd L_s$, and a
propagation-of-chaos estimate uniform in $\lambda$ are recorded in
Proposition~\ref{prop:pathwise}.}


\begin{remark}[Two boundary fluxes; terminology]\label{rem:flux-terminology}
The no-flux condition of \eqref{eq:ref-pde} concerns the limiting interior
flux $J=\nabla\rho+\rho\,b[\rho]$, whose normal trace vanishes in the
variational sense.  Quantitative estimates for the normal traces of
$J_K^\lambda=\nabla\rho^\lambda+\rho^\lambda b[\rho^\lambda]$ are treated in
the companion paper~\cite{SeoKimChoiStrong} under its additional
assumptions.  In contrast, the penalty component
$\rho^\lambda\nabla U_\lambda$ is supported outside $K$ and converges to the
$O(1)$ reaction measure in Theorem~\ref{thm:main}(iii).  Its density is the
boundary trace of $\rho$, not a normal flux.  Throughout the paper,
``boundary flux'' without qualification refers to the interior flux trace
$J_K^\lambda\cdot n$.
\end{remark}

\section{Well-posedness and uniqueness preliminaries}\label{sec:wp}

We first establish fixed-$\lambda$ well-posedness and the weighted energy
identity, then uniqueness of the reflected solution. These are the inputs
for the limiting argument in Section~\ref{sec:dynamical-proof}.

\begin{proposition}
\label{prop:pen-wp}
Suppose that Assumptions~\ref{ass:geometry}, \ref{ass:coefficients}, and
\ref{ass:data} hold.  Fix
$\lambda>0$.  There exists a unique penalized solution $\rho^\lambda$ in the
sense of Definition~\ref{def:pen-sol}.  It is the density of the time
marginals of the McKean--Vlasov SDE
\[
  \dd X_t=-\nabla U_\lambda(X_t)\dd t-b[\mathrm{Law}(X_t)](X_t)\dd t
  +\sqrt2\dd B_t,\qquad \mathrm{Law}(X_0)=\rho_0,
\]
and satisfies the energy identity
\begin{equation}\label{eq:energy-identity}
  \frac12\frac{\dd}{\dd t}\int_{\Rd}|\hh^\lambda_t|^2\dd\muh
  +\int_{\Rd}|\nabla\hh^\lambda_t|^2\dd\muh
  =-\int_{\Rd}\hh^\lambda_t\,b[\rho^\lambda_t]\cdot\nabla\hh^\lambda_t\dd\muh
  \qquad\text{for a.e. }t .
\end{equation}
The same conclusions hold if Assumption~\ref{ass:data} is replaced by
\(\rho_0^\lambda=h_0w_\lambda\), where \(h_0\ge0\),
\(h_0\in L^2(\muh)\), and \(\int_{\Rd}h_0\dd\muh=1\), with this
initial datum used in Definition~\ref{def:pen-sol} and in the SDE.
\end{proposition}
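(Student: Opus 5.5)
The plan has four steps: solve the nonlinear equation by a fixed point built on linear problems, identify the fixed point with the McKean--Vlasov SDE, derive the energy identity by a Lions--Magenes argument, and prove uniqueness by a Gronwall estimate in $L^1$.

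\emph{Linear problem.} Fix $\lambda>0$ and an arbitrary drift $b\in\mathcal B$, and consider first the linear problem $\partial_t\rho=\divg(\nabla\rho+\rho\nabla U_\lambda+\rho b)$. In the variable $\hh=\rho/\wl$ this becomes a weighted parabolic equation on the Hilbert triple $H^1(\muh)\subset L^2(\muh)\subset H^1(\muh)'$, with bilinear form
\[
a_t(\hh,v)=\int\nabla\hh\cdot\nabla v\dd\muh+\int\hh\,b\cdot\nabla v\dd\muh .
\]
Since $b$ is bounded, Young's inequality gives the G\aa rding bound $a_t(v,v)\ge\frac12\|\nabla v\|^2_{L^2(\muh)}-\frac{B_0^2}{2}\|v\|^2_{L^2(\muh)}$. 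The J.-L.~Lions theorem \cite{LionsMagenes} then gives existence and uniqueness of $\hh\in L^2(0,T;H^1(\muh))$ with $\partial_t\hh\in L^2(0,T;H^1(\muh)')$ for data $h_0\in L^2(\muh)$. Hard initialization is a special case, because $\rho_0\in L^2(K)$ and $\wl=1$ on $K$. The Lions--Magenes chain rule gives $\hh\in C([0,T];L^2(\muh))$ and the identity $\frac12\frac{\dd}{\dd t}\|\hh\|^2+a_t(\hh,\hh)=0$, which is exactly \eqref{eq:energy-identity} once $b=b[\rho_t^\lambda]$. Nonnegativity follows by testing with $\hh^-$, and mass conservation follows by testing with cutoffs $\varphi_R\to1$. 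The exterior Gaussian weight, together with $\rho=\wl\hh$, puts $\rho$ in $L^1$ with the needed tails, so the boundary terms from the cutoffs vanish.

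\emph{Nonlinear problem via the SDE.} For the nonlinear problem I would not run a fixed point at the PDE level. Instead I would use that $x\mapsto-\nabla U_\lambda(x)-b[\mu](x)$ is globally Lipschitz: $\proj_K$ is $1$-Lipschitz, so $\nabla U_\lambda$ has Lipschitz constant $\le2/\lambda$. The drift is also Lipschitz in $\mu$ for $W_1$ by \eqref{eq:drift-W1}. Sznitman's classical contraction on $C([0,T];\mcP_1)$ \cite{Sznitman1991} then yields a unique strong solution $X$ of the McKean--Vlasov SDE, with $\mathrm{Law}(X_0)=\rho_0$ having bounded support, hence finite moments.

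\emph{Identification and regularity.} Set $\mu_t=\mathrm{Law}(X_t)$ and freeze $\bar b:=b[\mu_t]\in\mathcal B$. By It\^o's formula, $\mu_t$ is a narrowly continuous measure solution of the linear Fokker--Planck equation with the Lipschitz drift $\nabla U_\lambda+\bar b$. The variational solution $\rho$ from the linear step is another such solution. Uniqueness of measure solutions for Lipschitz drifts, for instance by Holmgren duality using smooth solutions of the backward Kolmogorov equation or via \cite{BKRS}, gives $\mu_t=\rho_t\dd x$. Hence $\rho^\lambda:=\rho$ is a penalized solution with $b[\rho^\lambda_t]=\bar b$, so it satisfies \eqref{eq:pen-weak} and \eqref{eq:energy-identity}. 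Continuity in $L^1$ follows from $C([0,T];L^2(\muh))$ and Cauchy--Schwarz, because $\int\wl\dd x<\infty$.

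\emph{Uniqueness.} Let $\rho^1,\rho^2$ be two penalized solutions, and let $\hh^i$ solve the linear problem with frozen drifts $b^i=b[\rho^i_t]$. The difference $g=\hh^1-\hh^2$ solves the linear equation with drift $b^1$ and source $\divg\bigl(\wl\hh^2(b^1-b^2)\bigr)$. The energy estimate and \eqref{eq:drift-bounds} give
\[
\|g_t\|^2_{L^2(\muh)}\le C\int_0^t\|\hh^2_s\|^2_{L^2(\muh)}\|\rho^1_s-\rho^2_s\|^2_{L^1}\dd s .
\]
Since $\|\rho^1-\rho^2\|_{L^1}\le(\int\wl)^{1/2}\|g\|_{L^2(\muh)}$, Gronwall closes and gives uniqueness. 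The same argument covers the alternative initial datum $h_0\wl$ verbatim.

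\emph{Main obstacle.} The hardest step is the identification of the SDE marginals with the variational solution, that is, uniqueness for the linear Fokker--Planck equation in the class of narrowly continuous probability solutions with the unbounded drift $\nabla U_\lambda$. I would handle it by duality. For smooth $\varphi$, solve the backward equation $\partial_s u+\Delta u-(\nabla U_\lambda+\bar b)\cdot\nabla u=0$, $u(t)=\varphi$. Its Lipschitz drift gives $u$ with bounded gradient via synchronous coupling, and I would mollify $\nabla U_\lambda$ and $\bar b$ in space to get the regularity needed to test. Testing both solutions against $u$ then shows they agree.
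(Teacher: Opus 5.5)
Your proposal is correct. Three of its components coincide with the paper's proof in Appendix~\ref{app:wp}. These are the Lions solution of the frozen-drift ratio equation in the triple $H^1(\muh)\subset L^2(\muh)\subset H^1(\muh)'$ with the same G\aa rding bound, the identification of SDE marginals with that solution by backward-Kolmogorov duality, and the $L^2(\muh)$ Gr\"onwall uniqueness argument, which is exactly Lemma~\ref{lem:pen-stability}.

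The genuine difference is how you close the nonlinearity. The paper contracts the frozen-measure map in $\sup_t\|\cdot\|_{\TV}$ via Girsanov and Pinsker, using only $\|b[\mu]-b[\nu]\|_\infty\le L_W\|\mu-\nu\|_{\TV}$. This gives a step length $T_0<2/L_W^2$ independent of $\lambda$ and of the initial law. It needs no moment assumption and produces TV-continuity of the law curve directly, at the price of a change of measure over an unbounded Lipschitz reference drift.

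You instead run Sznitman's synchronous-coupling contraction in $W_1$, which avoids Girsanov entirely and matches the coupling arguments used later in Proposition~\ref{prop:pathwise}. The costs are the stronger bound \eqref{eq:drift-W1}, available since $W\in C^2_b$, and a first-moment condition on $\mathrm{Law}(X_0)$. Densities and $L^1$-continuity then come only afterwards from the PDE identification. Two small points should be made explicit rather than covered by ``verbatim''. First, the first moment for the datum $h_0\wl$ follows from $\int|x|h_0\wl\dd x\le\|h_0\|_{L^2(\muh)}\bigl(\int|x|^2\wl\dd x\bigr)^{1/2}<\infty$. Second, the contraction has to be run with the unbounded but globally Lipschitz term $-\nabla U_\lambda$ included, which is the routine linear-growth version of Sznitman's argument.

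A secondary difference is that you prove nonnegativity (by testing with $\hh^-$) and mass conservation at the PDE level before identification. You therefore need uniqueness only among probability-measure solutions of the frozen Fokker--Planck equation. The paper extracts these properties from the identification, so it needs uniqueness in a class of signed measures with bounded total variation and uniformly vanishing tails.
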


The proof (Appendix~\ref{app:wp}) is a fixed-point argument for the map
sending a curve of measures to the law of the SDE with frozen drift; the
contraction is established in $\sup_t\|\cdot\|_{\TV}$ through a Girsanov
relative-entropy estimate combined with Pinsker's inequality, and the energy
framework of Definition~\ref{def:pen-sol} is obtained from standard linear
Fokker--Planck theory applied with the frozen drift.  The constants in
Appendix~\ref{app:wp} degenerate as $\lambda\downarrow0$; this is
irrelevant, since all quantitative content of the dynamical limit is in the
$\lambda$-uniform estimates of Section~\ref{sec:dynamical-proof}.

\begin{proposition}
\label{prop:ref-unique}
Under Assumptions~\ref{ass:geometry}--\ref{ass:data} there is at most one
variational solution of \eqref{eq:ref-pde}.
\end{proposition}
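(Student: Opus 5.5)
The plan is a standard $L^2$ energy/Gronwall argument on the difference of two variational solutions. The drift nonlinearity is Lipschitz in $L^1$ by \eqref{eq:drift-bounds}, and on the bounded set $K$ the $L^1$ norm is controlled by the $L^2$ norm. Let $\rho^1,\rho^2$ be two variational solutions with the same datum $\rho_0$, and set $u:=\rho^1-\rho^2$. Subtracting the two copies of \eqref{eq:var-form} gives, for a.e.\ $t$ and all $\varphi\in H^1(K)$,
\[
  \langle\partial_t u,\varphi\rangle+\int_K\nabla u\cdot\nabla\varphi\dd x
  =-\int_K\bigl(u\,b[\rho^1_t]+\rho^2_t\,(b[\rho^1_t]-b[\rho^2_t])\bigr)\cdot\nabla\varphi\dd x .
\]
Since $u\in L^2(0,T;H^1(K))$ and $\partial_tu\in L^2(0,T;H^1(K)')$, the Lions--Magenes lemma lets us test with $\varphi=u(t)$. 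It gives $t\mapsto\|u(t)\|_{L^2(K)}^2$ absolutely continuous, with derivative $2\langle\partial_tu,u\rangle$, and $u(0)=0$.

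Testing with $u$ produces $\tfrac12\frac{\dd}{\dd t}\|u\|_2^2+\|\nabla u\|_2^2$ on the left. The first term on the right is bounded by $B_0\|u\|_2\|\nabla u\|_2$. The second is bounded by $L_W\|u\|_{L^1(K)}\|\rho^2_t\|_{L^2(K)}\|\nabla u\|_2$. Here I use that variational solutions are defined on $K$ and are regarded as probability measures supported in $K$, so that $b[\rho_t]$ uses $\rho_t\one_K$; then the drift difference is controlled by $\|u\|_{L^1(K)}$. Two further facts close the estimate: $\|u\|_{L^1(K)}\le|K|^{1/2}\|u\|_{L^2(K)}$, and $\rho^2\in L^\infty(0,T;L^2(K))$, so $\|\rho^2_t\|_{L^2}\le R$ for a.e.\ $t$. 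Young's inequality then absorbs $\|\nabla u\|_2^2$ and yields
\[
  \frac{\dd}{\dd t}\|u\|_2^2\le\bigl(B_0+L_W|K|^{1/2}R\bigr)^2\|u\|_2^2 .
\]
Gronwall's inequality with $u(0)=0$ gives $u\equiv0$.

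The only delicate step is justifying the use of the $L^1$-Lipschitz bound \eqref{eq:drift-bounds} for $b[\rho^1_t]-b[\rho^2_t]$. This requires $\rho^i_t$ to be probability densities for a.e.\ $t$, or at least finite measures. Nonnegativity is part of the definition, and mass conservation is asserted after Definition~\ref{def:var-sol}; it follows by testing with $\varphi\equiv1$. The argument also uses $\rho\in L^\infty(0,T;L^2)$ to control the factor $\rho^2$, rather than any $L^\infty$ bound, and no further obstacle is expected.
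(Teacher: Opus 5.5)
Your proposal is correct and follows the paper's proof essentially step for step. Like the paper, you subtract the two variational formulations, test with $u(t)$ via the Lions--Magenes chain rule, bound $\|b[\rho^1_t]-b[\rho^2_t]\|_\infty\le L_W|K|^{1/2}\|u(t)\|_{L^2(K)}$, use $\sup_t\|\rho^2_t\|_{L^2(K)}<\infty$, absorb $\|\nabla u\|_{L^2}^2$ by Young, and conclude by Gr\"onwall from $u(0)=0$. The step you flag as delicate is harmless: since $\mu\mapsto b[\mu]$ is affine, the $L^1$-Lipschitz bound holds for arbitrary integrable densities, and mass conservation is only used to get $\|b[\rho^1_t]\|_\infty\le B_0$.
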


The energy arguments below use the Lions--Magenes chain rule
for $u\in L^2(0,T;H^1(K))$ with
$\partial_tu\in L^2(0,T;H^1(K)')$; testing and time integration are
justified by Steklov averaging.

\begin{proof}
Let $\rho_1,\rho_2$ be variational solutions and $u:=\rho_1-\rho_2$.
Subtracting \eqref{eq:var-form} and testing with $\varphi=u(t)$,
\[
  \frac12\frac{\dd}{\dd t}\|u\|_{L^2(K)}^2
  +\|\nabla u\|_{L^2(K)}^2
  =-\int_K\Bigl(u\,b[\rho_{1,t}]
  +\rho_2\bigl(b[\rho_{1,t}]-b[\rho_{2,t}]\bigr)\Bigr)\cdot\nabla u\dd x .
\]
By \eqref{eq:drift-bounds},
\[
  \|b[\rho_{1,t}]-b[\rho_{2,t}]\|_\infty
  \le L_W\|u(t)\|_{L^1(K)}
  \le L_W|K|^{1/2}\|u(t)\|_{L^2(K)}.
\]
Thus, with $m:=\sup_t\|\rho_2(t)\|_{L^2(K)}<\infty$,
\[
  \frac{\dd}{\dd t}\|u\|_{L^2}^2+\|\nabla u\|_{L^2}^2
  \le C\bigl(B_0^2+L_W^2|K|\,m^2\bigr)\|u\|_{L^2}^2 .
\]
Gr\"onwall and $u(0)=0$ give $u\equiv0$.
\end{proof}

Existence of the variational solution is obtained \emph{as the limit} of the
penalized flow in Theorem~\ref{thm:main}; no separate existence proof is
needed.

\section{Proof of Theorem~\ref{thm:main}}
\label{sec:dynamical-proof}

This section proves Theorem~\ref{thm:main}.  The argument proceeds through
four modules: a drift-uniform Gaussian barrier, weighted energy and
time-regularity estimates, compactness of the density and full flux, and
identification of the limiting boundary reaction.

\subsection{Drift-uniform Gaussian domination}\label{sec:barrier}

This subsection establishes the replacement for the missing invariant-measure
domination of the linear theory.  The statement is a comparison principle
that holds \emph{simultaneously for every drift in the class $\mathcal B$}
of \eqref{eq:drift-class}; applied with the frozen drift
$b(t,x):=b[\rho^\lambda_t](x)$, it bounds the McKean--Vlasov solution without
ever confronting the nonlinearity.

We first fix a tilt field.  Let $d_s$ denote the signed distance to
$\Gamma$ ($d_s<0$ in the interior, $d_s=\dist$ on $K^c$), which is $C^2$
on the two-sided collar $\{|d_s|<\delta_0\}$ by
Assumption~\ref{ass:geometry}.  Choose $\theta\in C^\infty(\R)$ with
\[
  \theta(\sigma)=0\ \ (\sigma\le-\delta_0),\qquad
  0\le\theta'\le1,\qquad
  \theta'(\sigma)=1\ \ (\sigma\ge-\delta_0/2),\qquad
  \|\theta''\|_\infty\le C/\delta_0,
\]
and set $\zeta:=\theta\circ d_s$.  Then $\zeta\in C^{1,1}(\Rd)$, $\zeta\ge0$,
$\zeta$ is constant ($=0$) at depth $\ge\delta_0$ inside $K$, and
\begin{equation}\label{eq:zeta-props}
  \nabla\zeta=\nabla\dist=n\quad\text{on }K^c,\qquad
  |\nabla\zeta|\le1,\qquad
  |D^2\zeta|\le C_\zeta{,\quad|\Delta\zeta|\le C_\zeta}\quad\text{a.e.},
\end{equation}
with $C_\zeta=C_\zeta(K)$: on the collar this is the $C^2$ regularity of
$d_s$, and outside the collar $\theta'\equiv1$ so $D^2\zeta=D^2\dist$, which
is bounded by \eqref{eq:lap-dist-bound}.  The essential design point is that
\emph{the transition of the tilt takes place inside $K$, at unit scale}; a
transition at scale $\lambda$ or $\sqrt\lambda$ near $\Gamma$ would
produce second derivatives of size $\lambda^{-1}$ or $\lambda^{-1/2}$ and
destroy the supersolution property below.

\begin{theorem}[Tilted Gaussian barrier]\label{thm:barrier}
Let Assumptions~\ref{ass:geometry}--\ref{ass:coefficients} hold and set
\[
  \beta:=B_0+1,\qquad
  \gamma:=\beta(\beta+B_0)+\beta\,(C_K^*\vee C_\zeta)+dB_1 .
\]
Define
\[
  \bar R(t,x):=M\,e^{\gamma t}\,e^{\beta\zeta(x)}\,\wl(x)
  =M\,e^{\gamma t}\exp\Bigl(-\frac{\dist(x)^2}{2\lambda}+\beta\zeta(x)\Bigr).
\]
Then for every measurable family $(b(t,\cdot))_{t\in[0,T]}\subset\mathcal B$,
$\bar R$ is an a.e.\ supersolution of the linear penalized equation:
\begin{equation}\label{eq:supersol}
  \partial_t\bar R
  -\divg\bigl(\nabla\bar R+\bar R\,\nabla U_\lambda+\bar R\,b(t,\cdot)\bigr)
  \ \ge\ 0
  \qquad\text{a.e.\ in }(0,T)\times\Rd,
\end{equation}
for every $\lambda>0$.  Consequently, if $\rho^\lambda$ is a penalized
solution with $0\le\rho_0\le M\one_K$, then
\begin{equation}\label{eq:domination}
  0\ \le\ \rho^\lambda(t,x)\ \le\
  M\,e^{\gamma t}\,e^{\beta\zeta(x)}\,
  e^{-\dist(x)^2/2\lambda}
  \qquad\text{for a.e.\ }(t,x)\in[0,T]\times\Rd .
\end{equation}
More generally, \eqref{eq:domination} holds with \(M\) replaced by
\(M_0\) for any initial probability density satisfying
\(0\le\rho_0^\lambda\le M_0w_\lambda\).
\end{theorem}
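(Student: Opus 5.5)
The plan is a direct computation in the factorized variable $h:=\bar R/\wl=M e^{\gamma t}e^{\beta\zeta}$, followed by a weighted weak comparison argument for the linear equation with frozen drift. The factorization is natural because $\nabla\bar R+\bar R\nabla U_\lambda=\wl\nabla h$, exactly as in Definition~\ref{def:pen-sol}. Using $\divg(\wl\nabla h)=\wl(\Delta h-\nabla U_\lambda\cdot\nabla h)$ and $\divg(\wl h b)=\wl(b\cdot\nabla h+h\,\divg b-h\,b\cdot\nabla U_\lambda)$, the left side of \eqref{eq:supersol} equals
\[
  \wl\Bigl[\gamma h-\Delta h+\nabla U_\lambda\cdot\nabla h-b\cdot\nabla h-h\,\divg b+h\,b\cdot\nabla U_\lambda\Bigr].
\]
This holds a.e., since $\zeta\in C^{1,1}$ and $\dist^2\in C^{1,1}$ give $W^{2,\infty}_{\rm loc}$ regularity.

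Next insert $\nabla h=\beta h\nabla\zeta$ and $\Delta h=h(\beta\Delta\zeta+\beta^2|\nabla\zeta|^2)$. The singular terms appear only on $K^c$, where $\nabla\zeta=n$ by \eqref{eq:zeta-props} and $\nabla U_\lambda=\lambda^{-1}\dist\,n$. There
\[
  \nabla U_\lambda\cdot\nabla h+h\,b\cdot\nabla U_\lambda
  =\frac{h\dist}{\lambda}\bigl(\beta+b\cdot n\bigr)\ge\frac{h\dist}{\lambda}(\beta-B_0)\ge0,
\]
while both terms vanish on $K$. This is the key design point: the tilt slope $\beta>B_0$ makes the $\lambda^{-1}$ terms have a favourable sign, uniformly over $b\in\mathcal B$. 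The remaining bounded terms are controlled by $|\nabla\zeta|\le1$, $|\Delta\zeta|\le C_\zeta$, $|b\cdot\nabla\zeta|\le B_0$ and $|\divg b|\le dB_1$. They contribute at least $h\bigl(\gamma-\beta C_\zeta-\beta^2-\beta B_0-dB_1\bigr)\ge0$ by the choice of $\gamma$. This proves \eqref{eq:supersol} for every $\lambda$ and every measurable family $b$.

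For \eqref{eq:domination}, freeze $b(t,\cdot):=b[\rho^\lambda_t]\in\mathcal B$, so that $\hh^\lambda$ solves the linear weighted equation $\partial_t(\wl\hh)=\divg(\wl\nabla\hh+\wl\hh b)$ weakly. Since $\bar R$ is $W^{2,\infty}_{\rm loc}$ with an a.e.\ inequality, it satisfies the corresponding weak supersolution inequality against nonnegative test functions. Set $v:=\hh^\lambda-h$. Note that $h\in L^2(\muh)$ and $\nabla h\in L^2(\muh)$, because $e^{2\beta\zeta}\wl$ is integrable: $\zeta$ grows only linearly while $\wl$ is Gaussian. Hence $v_+\in L^2(0,T;H^1(\muh))$, and testing with $v_+$ gives
\[
  \tfrac12\tfrac{\dd}{\dd t}\int v_+^2\dd\muh+\int|\nabla v_+|^2\dd\muh\le-\int v_+\,b\cdot\nabla v_+\dd\muh\le\tfrac12\int|\nabla v_+|^2\dd\muh+\tfrac{B_0^2}{2}\int v_+^2\dd\muh .
\]
At $t=0$ we have $\rho_0\le M\one_K\le M e^{\beta\zeta}\wl$, since $\wl=1$ on $K$ and $\zeta\ge0$; hence $v_+(0)=0$. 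Gr\"onwall then gives $v_+\equiv0$, that is, $\rho^\lambda\le\bar R$. The same argument with $M_0$ covers initial data with $\rho_0^\lambda\le M_0\wl$. Nonnegativity follows identically by testing with $(\hh^\lambda)_-$.

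I expect the main obstacle to be rigour in the comparison step rather than the algebra. One must justify testing with $v_+$ on the unbounded domain, and apply the chain rule for $\int v_+^2\dd\muh$ when only $\partial_t(\wl\hh)\in L^2(0,T;H^1(\muh)')$ is available. I would do this by Steklov averaging in time, combined with a spatial cutoff $\chi_R$. The cutoff error terms vanish as $R\to\infty$ because of the Gaussian weight and the integrability of $h$, $\nabla h$ and $\hh^\lambda$ in $L^2(\muh)$. A secondary point is checking that the a.e.\ pointwise inequality integrates by parts correctly across $\Gamma$. This holds because $\bar R$, $\nabla\bar R$ and $\bar R\nabla U_\lambda$ are continuous, so no singular layer arises on $\Gamma$.
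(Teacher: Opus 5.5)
Your proposal is correct and follows essentially the paper's route: the same supersolution computation, in which $\beta>B_0$ gives the $\lambda^{-1}$ terms $\tfrac{\dist}{\lambda}(\beta+b\cdot n)$ a favourable sign uniformly over $\mathcal B$, followed by a weighted positive-part comparison for the frozen linear equation. The differences are cosmetic: you factor by $w_\lambda$ and compare $\hh^\lambda$ with the non-constant $Me^{\gamma t}e^{\beta\zeta}$ directly in the native class $L^2(\muh)$, which avoids the paper's case split at $\dist=\beta\lambda$ and its transfer of the solution class to $L^2(w_\beta)$; the paper instead factors by $w_\beta$ and compares with the spatially constant $Me^{\gamma t}$.
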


\begin{proof}
\textbf{Step 1: the pointwise inequality \eqref{eq:supersol}.}
Write $w_\beta:=e^{\beta\zeta}\wl$, so $\bar R=Me^{\gamma t}w_\beta$ and
$\nabla\log w_\beta=-\nabla U_\lambda+\beta\nabla\zeta$.  The flux field in
\eqref{eq:supersol} is
\[
  F:=\nabla\bar R+\bar R\nabla U_\lambda+\bar Rb
  =\bar R\bigl(\nabla\log w_\beta+\nabla U_\lambda+b\bigr)
  =\bar R\,v,
  \qquad
  v:=\beta\nabla\zeta+b,
\]
a bounded field, $|v|\le\beta+B_0$.  Since
$\partial_t\bar R=\gamma\bar R$, inequality \eqref{eq:supersol} is equivalent
to the a.e.\ pointwise bound
\begin{equation}\label{eq:div-condition}
  \frac{\divg(w_\beta v)}{w_\beta}
  =\bigl(-\nabla U_\lambda+\beta\nabla\zeta\bigr)\cdot v
  +\beta\Delta\zeta+\divg b
  \ \le\ \gamma .
\end{equation}
We verify \eqref{eq:div-condition} in three regions, using
$\nabla U_\lambda=(\dist/\lambda)\,n$ on $K^c$ and $\nabla U_\lambda=0$ on
$K$, together with \eqref{eq:zeta-props} and $|\divg b|\le dB_1$.

\emph{(a) On $K$.}  Here $\nabla U_\lambda=0$ and $|\nabla\zeta|\le1$, so
the left side of \eqref{eq:div-condition} is
$\beta\nabla\zeta\cdot(\beta\nabla\zeta+b)+\beta\Delta\zeta+\divg b
\le\beta(\beta+B_0)+\beta C_\zeta+dB_1\le\gamma$.

\emph{(b) On $K^c$ with $\dist(x)\le\beta\lambda$.}  Here $\nabla\zeta=n$,
so the first term equals
$\bigl(\beta-\dist/\lambda\bigr)\bigl(\beta+n\cdot b\bigr)$ with
$0\le\beta-\dist/\lambda\le\beta$ and $0<\beta-B_0\le\beta+n\cdot b\le
\beta+B_0$; hence the left side is at most
$\beta(\beta+B_0)+\beta C_K^*+dB_1\le\gamma$, using
$\Delta\zeta=\Delta\dist\le C_K^*$ from \eqref{eq:lap-dist-bound}.

\emph{(c) On $K^c$ with $\dist(x)\ge\beta\lambda$.}  Now
$\beta-\dist/\lambda\le0$ while
$\beta+n\cdot b\ge\beta-B_0=1>0$, so the first term is non-positive; the left side
is at most $\beta C_K^*+dB_1\le\gamma$.

All quantities above exist a.e.\ (Appendix~\ref{app:geometry}): $\dist$ is
$C^2$ in the exterior collar and $C^{1,1}_{\rm loc}$ on $K^c$ with the
a.e.\ bounds \eqref{eq:lap-dist-bound}, $\zeta\in C^{1,1}$, and $w_\beta v$
is locally Lipschitz with no jump across $\Gamma$ (indeed
$\nabla\log w_\beta\to\beta n$ from both sides at $\Gamma$, since
$\nabla U_\lambda=\dist\,n/\lambda\to0$ there and $\theta'\equiv1$ near
$\Gamma$), so the distributional divergence carries no surface part and
\eqref{eq:div-condition} holds distributionally as well.

\textbf{Step 2: comparison.}  At $t=0$, $\rho_0\le M\one_K\le Mw_\beta$
because $w_\beta\ge1$ on $K$ ($\zeta\ge0$, $\wl=1$).  Freezing
$b(t,\cdot):=b[\rho^\lambda_t]\in\mathcal B$, both $\rho^\lambda$ and
$\bar R$ are, respectively, a weak solution and an a.e.\ supersolution of
the \emph{same linear} equation with bounded measurable drift, and
$\rho^\lambda$ lies in the energy class of Definition~\ref{def:pen-sol}.
The weak comparison principle of Appendix~\ref{app:comparison}, applied to
the ratio $\rho^\lambda/w_\beta$, yields
$\rho^\lambda\le Me^{\gamma t}w_\beta$, which is \eqref{eq:domination}.
For the weighted initial datum, use
\(\rho_0^\lambda\le M_0w_\lambda\le M_0w_\beta\), since \(\zeta\ge0\),
and repeat the same comparison with \(M_0\) in place of \(M\).
Nonnegativity is part of Proposition~\ref{prop:pen-wp}.
\end{proof}

\begin{corollary}
\label{cor:moments}
Under the hypotheses of Theorem~\ref{thm:barrier} there is
$C=C(K,d,B_0,B_1,M,T)$ such that for all $t\in[0,T]$,
$\lambda\in(0,\lambda_0]$ and $q\ge0$,
\begin{equation}\label{eq:exterior-moments}
  \int_{K^c}\dist(x)^q\,\rho^\lambda(t,x)\dd x\le C_q\,\lambda^{(q+1)/2},
  \qquad
  \int_{\{\dist\ge\delta\}}\rho^\lambda(t,x)\dd x\le C\,e^{-\delta^2/8\lambda}
  \ \ (\delta\le1),
\end{equation}
and $\|\rho^\lambda\|_{L^\infty([0,T]\times K)}\le CM$.  In particular the
exterior mass is $O(\sqrt\lambda)$ uniformly on $[0,T]$.
For initial data dominated by \(M_0w_\lambda\), the same conclusions hold
with \(M\) replaced by \(M_0\).
\end{corollary}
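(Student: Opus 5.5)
The plan is to integrate the pointwise domination \eqref{eq:domination} of Theorem~\ref{thm:barrier} directly. On $[0,T]$ we have $e^{\gamma t}\le e^{\gamma T}$. Since $\zeta\le\sup_K\zeta+\dist$ (because $|\nabla\zeta|\le1$ and $\zeta$ is bounded on $K$ by $\theta(0)\le\delta_0$), this gives, for a.e.\ $x$,
\[
  \rho^\lambda(t,x)\le C M\,e^{\beta\dist(x)}e^{-\dist(x)^2/2\lambda},
  \qquad C=C(K,B_0,B_1,d,T).
\]
On $K$ we have $\dist=0$, which immediately gives $\|\rho^\lambda\|_{L^\infty([0,T]\times K)}\le CM$. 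The same argument with $M_0$ in place of $M$ handles the weighted-initial-data version. The exterior estimates then reduce to integrals of $\dist^q e^{\beta\dist-\dist^2/2\lambda}$ over $K^c$.

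For these integrals I would use the layer (co-area) structure of the exterior of a convex body. Write $A(r):=\Hm(\{\dist=r\})$. By the Steiner formula for convex $K$ (or the co-area formula with $|\nabla\dist|=1$ on $K^c$), $A(r)$ is a polynomial in $r$ of degree $d-1$, so $A(r)\le C_K(1+r)^{d-1}$; this is presumably the content of Lemma~\ref{lem:layer}. Hence
\[
  \int_{K^c}\dist^q\rho^\lambda_t\dd x\le CM\int_0^\infty r^q(1+r)^{d-1}e^{\beta r-r^2/2\lambda}\dd r.
\]
For $\lambda\le\lambda_0$, absorb $e^{\beta r}$ by completing the square: $\beta r-r^2/2\lambda\le -r^2/4\lambda+\beta^2\lambda$. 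Then substitute $r=\sqrt\lambda s$. This yields $\lambda^{(q+1)/2}\int_0^\infty s^q(1+\sqrt\lambda s)^{d-1}e^{-s^2/4}\dd s\le C_q\lambda^{(q+1)/2}$. The case $q=0$ gives the $O(\sqrt\lambda)$ exterior mass uniformly in $t$.

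For the tail on $\{\dist\ge\delta\}$, split $e^{-r^2/4\lambda}=e^{-r^2/8\lambda}e^{-r^2/8\lambda}$ and use $r\ge\delta$ in the first factor. This leaves $e^{-\delta^2/8\lambda}\int_0^\infty(1+r)^{d-1}e^{-r^2/8\lambda}\dd r\le Ce^{-\delta^2/8\lambda}$, since $\lambda\le1$. The only step needing care is the surface-area growth bound for the level sets of $\dist$. For convex $K$ this follows from monotonicity of perimeter under inclusion of convex sets, giving $\{\dist\le r\}\subset B(0,R_K+r)$ and so $A(r)\le C(R_K+r)^{d-1}$. I would cite the appendix geometry for this rather than reprove it. The constant's dependence on $M$ is linear, and $T$ enters only through $e^{\gamma T}$.
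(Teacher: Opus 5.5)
Your proposal is correct and follows the paper's proof essentially step by step: you bound $\zeta\le\dist+c_0$ on $K^c$ and insert this into the domination \eqref{eq:domination}, read off the interior $L^\infty$ bound from $\dist=0$ on $K$, and integrate the resulting tilted Gaussian profile by the coarea formula with the Steiner-type surface bound, which is exactly the content of Lemma~\ref{lem:layer}. The only cosmetic difference is that you absorb $e^{\beta\dist}$ by completing the square globally, whereas the paper first splits at $\dist=4\beta\lambda$ and then invokes the same layer lemma, whose own proof completes the square much as you do.
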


\begin{proof}
On $K^c$, $\zeta\le\dist+c_0$ with $c_0=c_0(K)$, so
\eqref{eq:domination} gives, for $\dist\ge4\beta\lambda$,
\[
\rho^\lambda_t\le Ce^{-\dist^2/2\lambda+\beta\dist}
\le Ce^{\beta^2\lambda}e^{-(\dist-\beta\lambda)^2/2\lambda}
\le C'e^{-\dist^2/4\lambda},
\]
and $\rho^\lambda_t\le C$ elsewhere.  The tilted Gaussian layer estimate of
Lemma~\ref{lem:layer} (coarea formula plus the Steiner polynomial bound for
the parallel sets of a convex body) yields \eqref{eq:exterior-moments}.  The
interior bound is \eqref{eq:domination} restricted to $K$, where
$\wl=1$ and $e^{\beta\zeta}\le e^{\beta\theta(0)}$.
\end{proof}

\begin{remark}[Uniformity over the drift class]
The pair $(\beta,\gamma)$ depends only on $(B_0,B_1,K,d)$.  This uniformity
is used repeatedly: for the McKean--Vlasov solution itself; for any
approximating or perturbed flow one may wish to insert; and for the
\emph{lifted} solutions employed in the strong theory of the companion
paper~\cite{SeoKimChoiStrong}, whose drifts are different members of the
same class.  It is the precise sense in which the barrier decouples the
nonlinearity.
\end{remark}

\subsection{Uniform weighted energy estimates}\label{sec:energy}

To obtain uniform energy estimates, we absorb the singular penalty into
the weight and treat the full drift as a bounded perturbation.
Throughout this subsection $\rho^\lambda$ is the penalized solution,
$\hh:=\hh^\lambda=\rho^\lambda/\wl$ (so $\hh=\rho^\lambda$ on $K$), and
$b^\lambda(t,x):=b[\rho^\lambda_t](x)\in\mathcal B$.  In terms of $\hh$ the
equation reads, weakly,
\begin{equation}\label{eq:ratio-eq}
  \wl\,\partial_t\hh=\divg\bigl(\wl\nabla\hh+\wl\,\hh\,b^\lambda\bigr),
\end{equation}
because $\nabla\rho^\lambda+\rho^\lambda\nabla U_\lambda=\wl\nabla\hh$.
We write
\[
  \|v\|_\lambda^2:=\int_{\Rd}v^2\dd\muh,\qquad
  E(t):=\int_{\Rd}|\nabla\hh_t|^2\dd\muh .
\]
Recall $\muh(\Rd)\le|K|+C\sqrt\lambda\le C_K$ and
$\|\hh_0\|_\lambda=\|\rho_0\|_{L^2(K)}=M_2$ for the hard initialization.

\begin{remark}[Relation to earlier strong-confinement estimates]
The exponential conjugation underlying \eqref{eq:ratio-eq} is closely
related to the weighted $L^2$ method of Alasio, Bruna, and Carrillo
\cite{AlasioBrunaCarrillo2020}.  In the present setting only the singular
Moreau--Yosida wall is absorbed into the weight, while the fixed external
field and the law-dependent interaction remain as a nonautonomous bounded
perturbation.  The estimates below supply the compactness part of the main
theorem; {the barrier makes the exterior estimates
quantitative, while Lemma~\ref{lem:factorization} identifies the reaction;
see Remark~\ref{rem:barrier-role}.}
\end{remark}

\begin{lemma}\label{lem:basic-energy}
For all $t\in[0,T]$,
\[
  \|\hh_t\|_\lambda^2+\int_0^tE(s)\dd s\ \le\ e^{B_0^2t}\,M_2^2 .
\]
\end{lemma}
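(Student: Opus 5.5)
The estimate follows from the energy identity \eqref{eq:energy-identity} of Proposition~\ref{prop:pen-wp}, with the drift term absorbed by Young's inequality. No use of the barrier is needed.

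\textbf{Step 1 (drift term).} Since $b^\lambda(t,\cdot)=b[\rho^\lambda_t]\in\mathcal B$, we have $\|b^\lambda\|_\infty\le B_0$ uniformly in $\lambda$ and $t$. The right-hand side of \eqref{eq:energy-identity} is therefore bounded by
\[
  B_0\int_{\Rd}|\hh_t|\,|\nabla\hh_t|\dd\muh
  \le\frac12\int_{\Rd}|\nabla\hh_t|^2\dd\muh+\frac{B_0^2}{2}\int_{\Rd}|\hh_t|^2\dd\muh .
\]
Both integrals are finite for a.e.\ $t$, because $\hh\in L^\infty(0,T;L^2(\muh))\cap L^2(0,T;H^1(\muh))$.

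\textbf{Step 2 (differential inequality).} Substituting into \eqref{eq:energy-identity}, absorbing half of the Dirichlet term, and multiplying by two gives, for a.e.\ $t$,
\[
  \frac{\dd}{\dd t}\|\hh_t\|_\lambda^2+E(t)\le B_0^2\|\hh_t\|_\lambda^2 .
\]
The map $t\mapsto\|\hh_t\|_\lambda^2$ is absolutely continuous. This is part of the energy identity and follows from the Lions--Magenes framework in the weighted space $H^1(\muh)$ together with its dual.

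\textbf{Step 3 (integrating factor).} Multiply by $e^{-B_0^2t}$ and integrate on $[0,t]$:
\[
  e^{-B_0^2t}\|\hh_t\|_\lambda^2+\int_0^te^{-B_0^2s}E(s)\dd s\le\|\hh_0\|_\lambda^2=M_2^2 .
\]
Here $\|\hh_0\|_\lambda=M_2$ holds for the hard initialization, since $\wl=1$ on $K$ and $\rho_0=0$ off $K$. Next use $e^{-B_0^2s}\ge e^{-B_0^2t}$ for $s\le t$ and multiply through by $e^{B_0^2t}$. This yields $\|\hh_t\|_\lambda^2+\int_0^tE(s)\dd s\le e^{B_0^2t}M_2^2$.

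\textbf{Main point to check.} The only delicate point is justifying the time integration of the energy identity at fixed $\lambda$, namely the absolute continuity of $\|\hh_t\|_\lambda^2$ and the fact that the identity holds for a.e.\ $t$. This is supplied by Proposition~\ref{prop:pen-wp}. If only an integrated version is available, one instead runs the same argument on the integrated identity, using Steklov averaging and Gr\"onwall's inequality.
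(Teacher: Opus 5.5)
Your proposal is correct and matches the paper's proof: the energy identity \eqref{eq:energy-identity}, the bound of the drift term by $B_0\|\hh\|_\lambda E^{1/2}$, Young's inequality to absorb half the Dirichlet term, and then Gr\"onwall. Your explicit integrating-factor step, using $e^{-B_0^2s}\ge e^{-B_0^2t}$, is just that Gr\"onwall step written out.
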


\begin{proof}
The identity \eqref{eq:energy-identity} and Cauchy--Schwarz give
$\frac12\frac{\dd}{\dd t}\|\hh\|_\lambda^2+E
\le B_0\|\hh\|_\lambda E^{1/2}
\le\frac12E+\frac{B_0^2}2\|\hh\|_\lambda^2$. The desired result follows from Gr\"onwall.
\end{proof}

{
\begin{lemma}
\label{lem:drift-time-reg}
For a.e.\ $t\in(0,T)$,
\begin{equation}\label{eq:J-L1-bound}
  \int_{\Rd}|J^\lambda(t,x)|\dd x
  \le C\bigl(1+E(t)^{1/2}\bigr),
\end{equation}
where $C$ is independent of $\lambda\le\lambda_0$.
\end{lemma}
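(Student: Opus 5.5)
We split the full flux as $J^\lambda=\wl\nabla\hh+\rho^\lambda b^\lambda$, with $b^\lambda=b[\rho^\lambda_t]\in\mathcal B$, and bound the two pieces separately in $L^1(\Rd)$.

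The drift piece is immediate. Since $\|b^\lambda\|_\infty\le B_0$ by \eqref{eq:drift-bounds} and $\rho^\lambda_t$ is a probability density,
\[
\int_{\Rd}|\rho^\lambda b^\lambda|\dd x\le B_0 .
\]

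For the weighted gradient piece we use Cauchy--Schwarz with respect to $\muh$:
\[
\int_{\Rd}\wl|\nabla\hh_t|\dd x=\int_{\Rd}|\nabla\hh_t|\dd\muh
\le \muh(\Rd)^{1/2}E(t)^{1/2}.
\]
This holds for a.e.\ $t$, because $\hh\in L^2(0,T;H^1(\muh))$ makes $E(t)$ finite for a.e.\ $t$. It remains to bound $\muh(\Rd)=|K|+\int_{K^c}e^{-\dist^2/2\lambda}\dd x$ uniformly in $\lambda\le\lambda_0\le1$. We take the coarea formula together with the Steiner polynomial bound $\Hm(\{\dist=s\})\le C_K(1+s)^{d-1}$, i.e.\ the Gaussian layer estimate of Lemma~\ref{lem:layer}. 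This gives
\[
\int_{K^c}e^{-\dist^2/2\lambda}\dd x\le C_K\int_0^\infty(1+s)^{d-1}e^{-s^2/2\lambda}\dd s\le C\sqrt\lambda .
\]
Hence $\muh(\Rd)\le C_K$, as already recorded before Lemma~\ref{lem:basic-energy}.

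Combining the two pieces gives \eqref{eq:J-L1-bound} with $C=\max(B_0,C_K^{1/2})$, which does not depend on $\lambda$.

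The only point needing care is the identity $\nabla\rho^\lambda+\rho^\lambda\nabla U_\lambda=\wl\nabla\hh$ at the level of the energy class. This identity is part of Definition~\ref{def:pen-sol}, where $J^\lambda$ is defined through $\wl\nabla\hh$, so no separate regularity of $\nabla\rho^\lambda$ is required and there is no genuine obstacle. The lemma's role is then to combine with Lemma~\ref{lem:basic-energy}, which gives $\int_0^TE\,\dd t\le C$ and hence a $\lambda$-uniform $L^1((0,T)\times\Rd)$ bound on $J^\lambda$. Through $\partial_t\rho^\lambda=\divg J^\lambda$, this yields equicontinuity in time in $\dBL$.
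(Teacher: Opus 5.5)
Your proof is correct and matches the paper's argument: you split $J^\lambda=\wl\nabla\hh+\rho^\lambda b^\lambda$ and bound the drift part by $B_0$ using unit mass. You then apply Cauchy--Schwarz against $\muh$ to the weighted gradient and control $\muh(\Rd)$ uniformly in $\lambda$ via Lemma~\ref{lem:layer} with $q=0$.
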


\begin{proof}
Since $J^\lambda=\wl\nabla\hh+\rho^\lambda b^\lambda$ and
$\|b^\lambda\|_\infty\le B_0$, Cauchy--Schwarz with respect to
$\muh=\wl\dd x$ gives
\[
  \int_{\Rd}|J^\lambda(t)|\dd x
  \le \muh(\Rd)^{1/2}
       \Bigl(\int_{\Rd}|\nabla\hh_t|^2\dd\muh\Bigr)^{1/2}+B_0.
\]
The mass $\muh(\Rd)$ is uniformly bounded by Lemma~\ref{lem:layer} with
$q=0$, and the gradient integral is $E(t)$.  This proves
\eqref{eq:J-L1-bound}.
\end{proof}
}

\begin{lemma}
\label{lem:interior}
With $C=C(T,K,d,B_0,B_1,\|W\|_{C^2},M,M_2)$:
\begin{align}
  &\sup_{t\le T}\|\rho^\lambda_t\|_{L^2(K)}
  +\|\nabla\rho^\lambda\|_{L^2((0,T)\times K)}
  +\|J^\lambda\|_{L^2((0,T)\times K)}\le C,
  \label{eq:interior-bounds}\\
  &\|\partial_t\rho^\lambda\|_{L^2(0,T;H^1_0(K)')}\le C .
  \label{eq:dual-bound}
\end{align}
\end{lemma}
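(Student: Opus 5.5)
The bounds in \eqref{eq:interior-bounds} follow from Lemma~\ref{lem:basic-energy}, using that on $K$ the weight is trivial. Since $\wl\equiv1$ on $K$, we have $\hh=\rho^\lambda$ there. Hence
\[
\|\rho^\lambda_t\|_{L^2(K)}^2\le\|\hh_t\|_\lambda^2\le e^{B_0^2T}M_2^2
\]
for every $t\le T$, and
\[
\int_0^T\!\!\int_K|\nabla\rho^\lambda|^2\dd x\dd t\le\int_0^TE(s)\dd s\le e^{B_0^2T}M_2^2 .
\]
On $K$ the full flux reduces to $J^\lambda=\nabla\rho^\lambda+\rho^\lambda b^\lambda$, because $\nabla U_\lambda=0$ there. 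With $\|b^\lambda\|_\infty\le B_0$,
\[
\|J^\lambda\|_{L^2((0,T)\times K)}\le\|\nabla\rho^\lambda\|_{L^2((0,T)\times K)}+B_0T^{1/2}\sup_t\|\rho^\lambda_t\|_{L^2(K)}\le C .
\]
Alternatively, Corollary~\ref{cor:moments} gives $\rho^\lambda\le CM$ on $K$, which bounds the drift term directly. I would note that both routes give $\lambda$-uniform constants.

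For the dual bound \eqref{eq:dual-bound}, I would test the weak formulation \eqref{eq:pen-weak} with $\varphi(t,x)=\eta(t)\psi(x)$, where $\eta\in C_c^\infty(0,T)$ and $\psi\in C_c^\infty(\operatorname{int}K)$, extended by zero outside $K$. Since $\nabla\varphi$ is supported in $K$, only the interior flux enters:
\[
-\int_0^T\eta'(t)\int_K\rho^\lambda\psi\dd x\dd t=\int_0^T\eta(t)\int_KJ^\lambda\cdot\nabla\psi\dd x\dd t .
\]
This identifies the distributional time derivative of $\rho^\lambda|_K$, acting on $H^1_0(K)$, as the functional $\psi\mapsto-\int_KJ^\lambda(t)\cdot\nabla\psi\dd x$. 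By density of $C_c^\infty(\operatorname{int}K)$ in $H^1_0(K)$, its norm satisfies
\[
\|\partial_t\rho^\lambda(t)\|_{H^1_0(K)'}\le\|J^\lambda(t)\|_{L^2(K)} \quad\text{for a.e. } t .
\]
Squaring and integrating in time, then applying the first part, gives \eqref{eq:dual-bound}.

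The only delicate point is this identification. One must check that the separated test functions determine $\partial_t\rho^\lambda$ as an element of $L^2(0,T;H^1_0(K)')$. That is standard once the right-hand side is known to be an $L^2$-in-time functional. It also explains why only $H^1_0(K)'$, and not $H^1(K)'$, is controlled at this stage: test functions that do not vanish on $\Gamma$ would pick up the exterior flux and the penalty term.
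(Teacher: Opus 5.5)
Your proposal is correct and follows the paper's proof: Lemma~\ref{lem:basic-energy}, together with $\wl\equiv1$ and $\hh=\rho^\lambda$ on $K$ and the bound $|J^\lambda|\le|\nabla\rho^\lambda|+B_0\rho^\lambda$ there, gives \eqref{eq:interior-bounds}, and testing \eqref{eq:pen-weak} with functions compactly supported in $\operatorname{int}K$ gives \eqref{eq:dual-bound}. One harmless slip: with $\eta(0)=0$ the tested identity is $\int_0^T\eta'\int_K\rho^\lambda\psi\,\dd x\,\dd t=\int_0^T\eta\int_KJ^\lambda\cdot\nabla\psi\,\dd x\,\dd t$, with no minus sign on the left; this is the form that actually yields the functional $\psi\mapsto-\int_KJ^\lambda\cdot\nabla\psi$ you state, and the sign does not affect the norm bound.
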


\begin{proof}
On $K$, $\wl\equiv1$, so \eqref{eq:interior-bounds} is
Lemma~\ref{lem:basic-energy} together with
$|J^\lambda|\le|\nabla\hh|+B_0|\hh|$ on $K$ (the penalty drift vanishes on
$K$).  For \eqref{eq:dual-bound}, test \eqref{eq:pen-weak} with
$\varphi\in C_c^\infty(K)$: $\langle\partial_t\rho^\lambda,\varphi\rangle
=-\int_KJ^\lambda\cdot\nabla\varphi$, and use
\eqref{eq:interior-bounds}.
\end{proof}

\begin{lemma}\label{lem:exterior-flux}
$\displaystyle\int_0^T\!\!\int_{K^c}|J^\lambda|\dd x\dd t\le
C\,\lambda^{1/4}$.
\end{lemma}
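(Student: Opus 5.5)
We write the exterior full flux as $J^\lambda=\wl\nabla\hh+\rho^\lambda b^\lambda$ on $K^c$ and treat the two pieces separately. The drift piece is immediate: $\int_0^T\!\int_{K^c}|\rho^\lambda b^\lambda|\le B_0 T\sup_t\int_{K^c}\rho^\lambda_t\le C\sqrt\lambda$ by Corollary~\ref{cor:moments} with $q=0$. So everything reduces to bounding $\int_0^T\!\int_{K^c}\wl|\nabla\hh|\dd x\dd t$ by $C\lambda^{1/4}$.

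A direct Cauchy--Schwarz with respect to $\muh$ restricted to $K^c$ gives
\[
  \int_0^T\!\!\int_{K^c}\wl|\nabla\hh|
  \le \Bigl(T\,\muh(K^c)\Bigr)^{1/2}\Bigl(\int_0^T E(s)\dd s\Bigr)^{1/2}.
\]
The first factor is controlled by the layer estimate (Lemma~\ref{lem:layer} with $q=0$), which gives $\muh(K^c)=\int_{K^c}e^{-\dist^2/2\lambda}\le C\sqrt\lambda$. The second factor is bounded by Lemma~\ref{lem:basic-energy}: $\int_0^TE\le e^{B_0^2T}M_2^2$. The product is therefore $C\lambda^{1/4}$, which is exactly the claimed rate.

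Combining the two pieces gives $C(\lambda^{1/4}+\sqrt\lambda)\le C\lambda^{1/4}$ for $\lambda\le\lambda_0\le1$. The only substantive inputs are the $\lambda$-uniform energy bound, which absorbs the singular penalty into the weight, and the Gaussian smallness of $\muh(K^c)$. The step to check is that Lemma~\ref{lem:layer} indeed gives $\muh(K^c)=O(\sqrt\lambda)$. This follows from coarea and the Steiner bound $\Hm(\{\dist=r\})\le C(1+r)^{d-1}$, which give $\int_0^\infty(1+r)^{d-1}e^{-r^2/2\lambda}\dd r\le C\sqrt\lambda$. No use of the barrier beyond the exterior mass bound is needed, and the $1/4$ exponent is what this Cauchy--Schwarz split naturally produces.
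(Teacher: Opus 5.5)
Your proposal is correct and matches the paper's proof essentially line by line. It uses the same split of $J^\lambda$ on $K^c$ into $\wl\nabla\hh$ and $\rho^\lambda b^\lambda$, bounds the drift piece by $C\sqrt\lambda$ through the exterior-mass estimate of Corollary~\ref{cor:moments}, and bounds the weighted-gradient piece by Cauchy--Schwarz against $\muh$, using $\int_0^T E\le C$ from Lemma~\ref{lem:basic-energy} and $\muh(K^c)\le C\sqrt\lambda$ from Lemma~\ref{lem:layer}.
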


\begin{proof}
$\int_0^T\!\int_{K^c}|J^\lambda|
\le\int_0^T\!\int_{K^c}\wl|\nabla\hh|
+B_0\int_0^T\!\int_{K^c}\rho^\lambda$.  The second term is
$O(\sqrt\lambda)$ by Corollary~\ref{cor:moments}.  For the first,
Cauchy--Schwarz with respect to $\muh$:
\[
  \int_0^T\!\!\int_{K^c}\wl|\nabla\hh|
  \le\Bigl(\int_0^T\!\!E\Bigr)^{1/2}
  \bigl(T\,\muh(K^c)\bigr)^{1/2}
  \le C\bigl(\sqrt\lambda\bigr)^{1/2}=C\lambda^{1/4},
\]
using $\muh(K^c)=\int_{K^c}e^{-\dist^2/2\lambda}\le C\sqrt\lambda$
(Lemma~\ref{lem:layer}).
\end{proof}

{
We close this subsection with the stability estimate used for uniqueness of
Proposition~\ref{prop:pen-wp}.

\begin{lemma}
\label{lem:pen-stability}
Let $\rho^{(1)},\rho^{(2)}$ be penalized solutions with data
$\rho^{(i)}_0=h^{(i)}_0\wl$, $h^{(i)}_0\in L^2(\muh)$, and
$\rho^{(i)}_0\in\mcP(\Rd)$.  Then, with $z:=h^{(1)}-h^{(2)}$,
\[
  \sup_{t\le T}\|z_t\|_\lambda^2
  +\int_0^T\!\!\int_{\Rd}|\nabla z|^2\dd\muh\dd t
  \le C\|z_0\|_\lambda^2,
\]
where $C=C(T,B_0,L_W,K,\sup_t\|h^{(2)}_t\|_\lambda)$ is uniform in
$\lambda$.
\end{lemma}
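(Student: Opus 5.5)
We work at the level of the ratio equation \eqref{eq:ratio-eq}. For $i=1,2$ write $h^{(i)}=\rho^{(i)}/\wl$ and $b_i(t):=b[\rho^{(i)}_t]$. Subtracting the two weak formulations gives, weakly,
\[
  \wl\partial_t z=\divg\bigl(\wl\nabla z+\wl\, z\, b_1+\wl\, h^{(2)}(b_1-b_2)\bigr).
\]
We test with $z$ itself. This is justified exactly as the energy identity \eqref{eq:energy-identity} of Proposition~\ref{prop:pen-wp}, since $z\in L^2(0,T;H^1(\muh))$ and $\wl\partial_t z$ lies in the dual space, so the Lions--Magenes chain rule applies in the weighted Hilbert triple. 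Testing yields
\[
  \frac12\frac{\dd}{\dd t}\|z\|_\lambda^2+\int|\nabla z|^2\dd\muh
  =-\int z\,b_1\cdot\nabla z\dd\muh-\int h^{(2)}(b_1-b_2)\cdot\nabla z\dd\muh .
\]

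Next we bound the two terms on the right. The first is at most $B_0\|z\|_\lambda\|\nabla z\|_{L^2(\muh)}$. For the second, \eqref{eq:drift-bounds} gives
\[
  \|b_1-b_2\|_\infty\le L_W\|\rho^{(1)}-\rho^{(2)}\|_{L^1(\Rd)}=L_W\int|z|\dd\muh\le L_W\,\muh(\Rd)^{1/2}\|z\|_\lambda .
\]
Here $\muh(\Rd)\le C_K$ uniformly in $\lambda$ by Lemma~\ref{lem:layer}. The second term is therefore at most
\[
  L_W C_K^{1/2}\Bigl(\sup_t\|h^{(2)}_t\|_\lambda\Bigr)\|z\|_\lambda\|\nabla z\|_{L^2(\muh)} .
\]
Applying Young's inequality to both bounds and absorbing half of the Dirichlet term on the left, we get
\[
  \frac{\dd}{\dd t}\|z\|_\lambda^2+\int|\nabla z|^2\dd\muh\le C\|z\|_\lambda^2 ,
\]
with $C=C\bigl(B_0,L_W,K,\sup_t\|h^{(2)}_t\|_\lambda\bigr)$. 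Gr\"onwall's inequality controls $\sup_t\|z_t\|_\lambda^2$ by $e^{CT}\|z_0\|_\lambda^2$. Integrating the differential inequality once more in time then bounds the gradient term, which completes the stated estimate.

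The main technical point is the justification of testing with $z$ in the weighted, whole-space setting. We would rely on the energy framework already supplied by Proposition~\ref{prop:pen-wp}, namely that penalized solutions satisfy \eqref{eq:energy-identity}, and apply the same argument to the difference, using Steklov averaging if needed. Uniformity in $\lambda$ is automatic, because the only $\lambda$-dependent quantity entering the constant is $\muh(\Rd)$, which is uniformly bounded.
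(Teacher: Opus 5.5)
Your proposal is correct and follows essentially the same route as the paper's proof. Like the paper, you subtract the two ratio equations, test with $z$, and bound $\|b_1-b_2\|_\infty$ by $L_W\muh(\Rd)^{1/2}\|z\|_\lambda$ via \eqref{eq:drift-bounds} and Cauchy--Schwarz against $\muh$. You then close with Young's inequality and Gr\"onwall, using the $\lambda$-uniform bound on $\muh(\Rd)$.
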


\begin{proof}
Subtracting the two ratio equations \eqref{eq:ratio-eq} and testing by $z$
gives, with $\Delta b=b[\rho^{(1)}_t]-b[\rho^{(2)}_t]$,
\[
  \frac12\frac{\dd}{\dd t}\|z\|_\lambda^2
  +\|\nabla z\|_{L^2(\muh)}^2
  \le \bigl(B_0\|z\|_\lambda
      +\|\Delta b\|_\infty\|h^{(2)}\|_\lambda\bigr)
      \|\nabla z\|_{L^2(\muh)}.
\]
By \eqref{eq:drift-bounds} and Cauchy--Schwarz,
\[
  \|\Delta b\|_\infty
  \le L_W\int_{\Rd}|z|\dd\muh
  \le L_W\muh(\Rd)^{1/2}\|z\|_\lambda
  \le C\|z\|_\lambda.
\]
Young's inequality therefore yields
$\frac{\dd}{\dd t}\|z\|_\lambda^2+
 \|\nabla z\|_{L^2(\muh)}^2\le C\|z\|_\lambda^2$;
integration and Gr\"onwall give the claim.
\end{proof}
}

\subsection{Density and full-flux convergence}\label{sec:weak}

We now use compactness and continuity of the interaction drift to identify
the reflected limit.

\begin{proposition}\label{prop:compactness}
The family $(\rho^\lambda|_K)_{\lambda\le\lambda_0}$ is relatively compact
in $L^2((0,T)\times K)$, and every limit point $\rho$ belongs to
$L^\infty(0,T;L^2(K))\cap L^2(0,T;H^1(K))$ with
$\partial_t\rho\in L^2(0,T;H^1_0(K)')$.
\end{proposition}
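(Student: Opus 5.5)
The plan is to apply the Aubin--Lions--Simon lemma to the restrictions $u^\lambda:=\rho^\lambda|_K$, using the uniform bounds of Lemma~\ref{lem:interior}. Those bounds give three facts. First, $u^\lambda$ is bounded in $L^\infty(0,T;L^2(K))\cap L^2(0,T;H^1(K))$, since $\hh=\rho^\lambda$ on $K$ and $\nabla\rho^\lambda$ is bounded in $L^2((0,T)\times K)$. Second, the $L^2(0,T;L^2)$ part of the $H^1$ norm comes from the sup-in-time bound. Third, $\partial_t u^\lambda$ is bounded in $L^2(0,T;H^1_0(K)')$. We apply the lemma with the triple $H^1(K)\hookrightarrow\hookrightarrow L^2(K)\hookrightarrow H^1_0(K)'$. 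The first embedding is compact by Rellich, because $K$ is a bounded Lipschitz (indeed $C^2$ convex) domain. The second is continuous and injective, since $C_c^\infty(K)$ is dense in $L^2(K)$ and so $L^2(K)$ embeds into $H^{-1}(K)=H^1_0(K)'$. Aubin--Lions then gives relative compactness of $(u^\lambda)$ in $L^2(0,T;L^2(K))=L^2((0,T)\times K)$.

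The point to check is that the time-derivative bound is only in the dual of $H^1_0$, not of $H^1$. Aubin--Lions needs only some Banach space $Y$ into which $L^2$ embeds continuously, and no compactness involving $Y$, so $H^1_0(K)'$ is admissible. Here $\partial_t u^\lambda$ is the distributional time derivative of $u^\lambda$ as a $\mathcal D'(K)$-valued function, tested against $C_c^\infty(K)$ as in the proof of \eqref{eq:dual-bound}. It extends by density to $H^1_0(K)$, with the bound $\|J^\lambda\|_{L^2((0,T)\times K)}$.

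For the properties of a limit point $\rho$, take a sequence $\lambda_k\to0$ with $u^{\lambda_k}\to\rho$ strongly in $L^2((0,T)\times K)$. By the uniform bounds and Banach--Alaoglu, pass to a further subsequence with three convergences:
\begin{itemize}
\item $u^{\lambda_k}\rightharpoonup\rho$ weakly in $L^2(0,T;H^1(K))$;
\item $u^{\lambda_k}\rightharpoonup^*\rho$ weakly-$*$ in $L^\infty(0,T;L^2(K))$, which is the dual of $L^1(0,T;L^2(K))$;
\item $\partial_t u^{\lambda_k}\rightharpoonup\xi$ weakly in $L^2(0,T;H^1_0(K)')$, a reflexive Hilbert space.
\end{itemize}
All three limits coincide with the strong limit, because each convergence implies distributional convergence on $(0,T)\times K$. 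Similarly $\xi=\partial_t\rho$: for $\psi\in C_c^\infty(0,T)$ and $\varphi\in C_c^\infty(K)$, pass to the limit in $\int\langle\partial_tu^\lambda,\varphi\rangle\psi\dd t=-\iint u^\lambda\varphi\psi'$. Weak lower semicontinuity of the norms then gives the stated membership, and nonnegativity of $\rho$ is inherited from the strong $L^2$ limit.

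The only real obstacle is the choice of the dual space; with the triple above it is routine. Identifying $\partial_t\rho$ in the larger space $L^2(0,T;H^1(K)')$, as required in Definition~\ref{def:var-sol}, is deliberately not claimed here. It will follow later from the full-flux and boundary analysis, once the variational identity is derived with test functions in $H^1(K)$.
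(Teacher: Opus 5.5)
Your proposal is correct and matches the paper's argument. The uniform bounds of Lemma~\ref{lem:interior} feed Aubin--Lions--Simon with the triple $H^1(K)\hookrightarrow\hookrightarrow L^2(K)\hookrightarrow H^1_0(K)'$, limit-point regularity follows by weak lower semicontinuity, and the upgrade of $\partial_t\rho$ to $L^2(0,T;H^1(K)')$ is deferred. In the paper that upgrade comes from the limiting variational equation in Step~2 of the proof of Theorem~\ref{thm:main}, using test functions in $C_c^\infty([0,T)\times\Rd)$ and the vanishing exterior flux, not from the boundary-reaction analysis.
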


\begin{proof}
Estimates \eqref{eq:interior-bounds} and \eqref{eq:dual-bound} give uniform
bounds for $\rho^\lambda|_K$ in $L^2(0,T;H^1(K))$ and for
$\partial_t\rho^\lambda$ in $L^2(0,T;H^1_0(K)')$, respectively.  Since
$H^1(K)\hookrightarrow\hookrightarrow L^2(K)\hookrightarrow H^1_0(K)'$,
the Aubin--Lions--Simon theorem gives relative compactness in
$L^2(0,T;L^2(K))$.  The stated regularity of limit points follows from weak
lower semicontinuity and by passing the uniform bound \eqref{eq:dual-bound}
to the limit; the upgrade of the dual estimate to the full space
$L^2(0,T;H^1(K)')$ is \emph{not} claimed at this stage --- it is obtained
from the limiting variational equation in the proof of
Theorem~\ref{thm:main}.
\end{proof}

The next lemma supplies the time-uniformity announced in
Theorem~\ref{thm:main}.  Recall the bounded-Lipschitz metric
$\dBL(\mu,\nu)=\sup\{\int\varphi\,\dd(\mu-\nu):
\|\varphi\|_\infty+\mathrm{Lip}(\varphi)\le1\}$, which metrizes narrow
convergence on $\mcP(\Rd)$ along tight families.

\begin{lemma}
\label{lem:equicont}
There is $C$, independent of $\lambda\le\lambda_0$, such that
\[
  \dBL\bigl(\rho^\lambda_t,\rho^\lambda_s\bigr)
  \ \le\ C\,|t-s|^{1/2},
  \qquad 0\le s\le t\le T .
\]
Moreover the family $\{\rho^\lambda_t:\lambda\le\lambda_0,\,t\in[0,T]\}$ is
tight.
\end{lemma}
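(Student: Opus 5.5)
The plan is to test the weak formulation \eqref{eq:pen-weak} against a time-independent test function and use the $L^1$ flux bound of Lemma~\ref{lem:drift-time-reg}. Fix $\varphi\in C_c^\infty(\Rd)$ with $\|\varphi\|_\infty+\mathrm{Lip}(\varphi)\le1$. Since $\rho^\lambda\in C([0,T];L^1(\Rd))$, inserting into \eqref{eq:pen-weak} the product of $\varphi$ with a time cutoff approximating $\one_{[s,t]}$ (Steklov/cutoff argument) gives, for all $0\le s\le t\le T$,
\[
  \int_{\Rd}\varphi\,\dd(\rho^\lambda_t-\rho^\lambda_s)
  =-\int_s^t\!\!\int_{\Rd}J^\lambda\cdot\nabla\varphi\dd x\dd r .
\]
This identity first holds for a.e.\ $s,t$ and then for all $s,t$ by $L^1$-continuity. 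Hence
\[
  \Bigl|\int\varphi\,\dd(\rho^\lambda_t-\rho^\lambda_s)\Bigr|
  \le\int_s^t\|J^\lambda(r)\|_{L^1(\Rd)}\dd r
  \le C\int_s^t\bigl(1+E(r)^{1/2}\bigr)\dd r .
\]
By Cauchy--Schwarz in time this is at most $C|t-s|+C|t-s|^{1/2}\bigl(\int_0^TE\bigr)^{1/2}$. Lemma~\ref{lem:basic-energy} bounds $\int_0^TE\le e^{B_0^2T}M_2^2$, and $|t-s|\le T^{1/2}|t-s|^{1/2}$, so we obtain $C|t-s|^{1/2}$ with $C$ independent of $\lambda$.

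The supremum in $\dBL$ ranges over bounded Lipschitz functions that need not have compact support. I will reduce to $C_c^\infty$ in two steps. First mollify: a mollified function keeps $\|\cdot\|_\infty+\mathrm{Lip}\le1$ and converges uniformly, so the integrals against the probability measures converge. Second, truncate by $\chi_R\varphi$, where $\chi_R$ is a cutoff equal to $1$ on $B_R$ with $|\nabla\chi_R|\le 1/R$. The truncation changes the normalization only by a factor $1+1/R$, and the error is controlled by the mass of $\rho^\lambda_t,\rho^\lambda_s$ outside $B_R$. That mass is uniformly small by the tightness part below, so letting $R\to\infty$ yields the estimate for all admissible $\varphi$.

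Tightness follows from Corollary~\ref{cor:moments}. Choose $R_0$ with $K\subset B_{R_0}$. For $R\ge R_0+1$, the set $\{|x|\ge R\}$ is contained in $\{\dist\ge1\}$, and the corollary bounds its $\rho^\lambda_t$-mass by $Ce^{-1/8\lambda}$. A finer bound uses the domination \eqref{eq:domination} together with $\zeta\le\dist+c_0$: it gives a Gaussian tail $\rho^\lambda_t(x)\le C e^{-\dist(x)^2/4\lambda}\le Ce^{-\dist(x)^2/4\lambda_0}$, which is integrable uniformly in $\lambda\le\lambda_0$ and $t\le T$. Its integral over $\{|x|\ge R\}$ tends to $0$ as $R\to\infty$ uniformly in $\lambda$ and $t$, which is tightness.

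I expect no serious obstacle here. The one delicate point is justifying the time-localized identity for every pair $s,t$ rather than almost every pair, together with the extension from compactly supported test functions to general bounded Lipschitz ones. Both are handled by the $C([0,T];L^1)$ continuity in Definition~\ref{def:pen-sol} and the uniform tightness.
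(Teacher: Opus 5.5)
Your proposal is correct and follows essentially the same route as the paper. The shared steps are the time-localized weak form, the $L^1$ flux bound of Lemma~\ref{lem:drift-time-reg} combined with Cauchy--Schwarz in time and Lemma~\ref{lem:basic-energy}, mollification for bounded Lipschitz tests, and tightness from the barrier domination made uniform through $\lambda\le\lambda_0$.

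The one difference is cosmetic. The paper removes the spatial cutoff inside the identity, using $J^\lambda\in L^1((0,T)\times\Rd)$ and $|\nabla\chi_R|\le C/R$. You instead extend the final bound by truncating the test function and letting the tail mass vanish; for this, the fact that each fixed $\rho^\lambda_t,\rho^\lambda_s$ is a probability measure already suffices, so uniform tightness is not needed at that step.
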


\begin{proof}
Let $\varphi\in C^1_b(\Rd)$ with
$\|\varphi\|_\infty+\|\nabla\varphi\|_\infty\le1$.
Choose spatial cutoffs \(\chi_R\), equal to one on \(B_R\), supported in
\(B_{2R}\), and satisfying \(|\nabla\chi_R|\le C/R\).
Test \eqref{eq:pen-weak} with
\(\eta(r)\chi_R(x)\varphi_\eps(x)\), where
\(\eta\in C_c^\infty(0,T)\) and \(\varphi_\eps\) is a mollification
of \(\varphi\).  Lemma~\ref{lem:drift-time-reg} and the basic energy
estimate give \(J^\lambda\in L^1((0,T)\times\Rd)\); in particular,
\[
 \left|\int_0^T\!\!\int_{\Rd}
 \eta J^\lambda\cdot\varphi_\eps\nabla\chi_R\dd x\dd r\right|
 \le\frac{C\|\eta\|_\infty}{R}
 \int_0^T\!\!\int_{\Rd}|J^\lambda|\dd x\dd r\longrightarrow0.
\]
Dominated convergence removes the remaining spatial cutoff and then the
mollification.  Approximating the indicator of \((s,t)\) by smooth time
cutoffs gives, using \(\rho^\lambda\in C([0,T];L^1(\Rd))\) also at the
endpoints,
\[
  \int_{\Rd}\varphi\,\dd\rho^\lambda_t
  -\int_{\Rd}\varphi\,\dd\rho^\lambda_s
  =-\int_s^t\!\!\int_{\Rd}J^\lambda\cdot\nabla\varphi\dd x\dd r,
\]
so that, by \eqref{eq:J-L1-bound} and Cauchy--Schwarz in time,
\[
\begin{aligned}
  \Bigl|\int\varphi\,\dd\rho^\lambda_t-\int\varphi\,\dd\rho^\lambda_s\Bigr|
  &\le\int_s^t\!\!\int_{\Rd}|J^\lambda|\dd x\dd r\\
  &\le C\int_s^t\bigl(1+E(r)^{1/2}\bigr)\dd r\\
  &\le C(t-s)+C(t-s)^{1/2}\Bigl(\int_0^TE\Bigr)^{1/2}.
\end{aligned}
\]
and $\int_0^TE\le C$ by Lemma~\ref{lem:basic-energy}.  A bounded Lipschitz
$\varphi$ with $\|\varphi\|_\infty+\mathrm{Lip}(\varphi)\le1$ is the uniform
limit of mollifications $\varphi_\eps$ with
$\|\varphi_\eps\|_\infty+\|\nabla\varphi_\eps\|_\infty\le1$, so the same
bound holds in the supremum defining $\dBL$.  Tightness follows from the
domination $\rho^\lambda_t\le\bar R_t=Me^{\gamma t}e^{\beta\zeta}\wl$ of
Theorem~\ref{thm:barrier}: since $U_\lambda\ge U_{\lambda_0}$ pointwise for
$\lambda\le\lambda_0$, we have $\wl\le e^{-\dist^2/2\lambda_0}$, so
\[
  \rho^\lambda_t\ \le\ Me^{\gamma T}\,e^{\beta\zeta}\,
  e^{-\dist^2/2\lambda_0}
  \qquad\text{for all }\lambda\le\lambda_0,\ t\le T,
\]
and the right-hand side is a single fixed integrable function on $\Rd$
(Lemma~\ref{lem:layer} applied with $\lambda=\lambda_0$, using
$\zeta\le\dist+c_0$ on $K^c$).  Hence
$\sup_{\lambda\le\lambda_0,\,t\le T}\rho^\lambda_t(\{|x|\ge R\})\to0$ as
$R\to\infty$.
\end{proof}

\begin{proposition}\label{prop:interaction}
Let $\rho^{\lambda_j}|_K\to\rho$ in $L^2((0,T)\times K)$ with
$\lambda_j\downarrow0$.  Then
\[
  \int_0^T\bigl\|b[\rho^{\lambda_j}_t]-b[\rho_t]\bigr\|_{\infty}^2\dd t
  \longrightarrow0,
\]
where $\rho_t$ is extended by zero off $K$.
\end{proposition}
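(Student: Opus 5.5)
The plan is to reduce the statement to an $L^1$ estimate at each time and then use dominated convergence in $t$. By \eqref{eq:drift-bounds}, for a.e.\ $t$,
\[
  \bigl\|b[\rho^{\lambda_j}_t]-b[\rho_t]\bigr\|_\infty
  \le L_W\bigl\|\rho^{\lambda_j}_t-\rho_t\one_K\bigr\|_{L^1(\Rd)}
  \le L_W\Bigl(\|\rho^{\lambda_j}_t-\rho_t\|_{L^1(K)}
  +\int_{K^c}\rho^{\lambda_j}_t\dd x\Bigr).
\]
The exterior term is at most $C\sqrt{\lambda_j}$ uniformly in $t$, by Corollary~\ref{cor:moments}. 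For the interior term, Cauchy--Schwarz on the bounded set $K$ gives $\|\rho^{\lambda_j}_t-\rho_t\|_{L^1(K)}\le|K|^{1/2}\|\rho^{\lambda_j}_t-\rho_t\|_{L^2(K)}$.

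Squaring and integrating in time, we get
\[
  \int_0^T\bigl\|b[\rho^{\lambda_j}_t]-b[\rho_t]\bigr\|_\infty^2\dd t
  \le 2L_W^2\Bigl(|K|\,\|\rho^{\lambda_j}-\rho\|_{L^2((0,T)\times K)}^2
  +C^2T\lambda_j\Bigr)\longrightarrow0,
\]
using the hypothesis of strong $L^2((0,T)\times K)$ convergence. No dominated convergence argument is needed, since the squared $L^1$ norm is controlled directly by the squared $L^2$ norm.

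The only points that need care are the following.

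\begin{itemize}
\item The Lipschitz bound \eqref{eq:drift-bounds} is stated for probability densities. Here $\rho_t\one_K$ is a probability density for a.e.\ $t$, being an a.e.\ limit of nonnegative functions. Its mass is $1$ because $\int_K\rho^{\lambda_j}_t\dd x=1-O(\sqrt{\lambda_j})$, and the $L^1((0,T)\times K)$ convergence passes this to a.e.\ $t$. In any case the estimate $|b[\mu]-b[\nu]|\le\|\nabla_xW\|_\infty\|\mu-\nu\|_{\TV}$ holds for arbitrary finite measures directly from the definition \eqref{eq:drift-def}, so normalization is not actually required.
\item Measurability in $t$ of $t\mapsto\|b[\rho^{\lambda_j}_t]-b[\rho_t]\|_\infty$ can be handled by bounding it above by the measurable majorant displayed above; the upper integral then suffices.
\end{itemize}

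I expect no genuine obstacle. The only mildly delicate step is the exterior mass, which is exactly what the uniform barrier of Theorem~\ref{thm:barrier} supplies.
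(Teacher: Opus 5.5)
Your argument is correct and follows the paper's proof. It uses the same splitting via \eqref{eq:drift-bounds} into an interior $L^1(K)$ term, controlled by Cauchy--Schwarz on $K$, and an exterior mass term bounded by $C\sqrt{\lambda_j}$ from Corollary~\ref{cor:moments}. The only difference is cosmetic: you conclude directly with the bound $2L_W^2\bigl(|K|\,\|\rho^{\lambda_j}-\rho\|_{L^2((0,T)\times K)}^2+C^2T\lambda_j\bigr)$, while the paper also appeals to dominated convergence with the uniform bound $2B_0$, which, as you note, is not needed.
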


\begin{proof}
By \eqref{eq:drift-bounds},
$\|b[\rho^{\lambda_j}_t]-b[\rho_t]\|_\infty
\le L_W\bigl(\|\rho^{\lambda_j}_t-\rho_t\|_{L^1(K)}
+\int_{K^c}\rho^{\lambda_j}_t\bigr)$.
The second summand is bounded by $C\sqrt{\lambda_j}$ uniformly in $t$
(Corollary~\ref{cor:moments}); the first tends to $0$ in
$L^2(0,T)$ since $\|\cdot\|_{L^1(K)}\le|K|^{1/2}\|\cdot\|_{L^2(K)}$ and
$\rho^{\lambda_j}\to\rho$ in $L^2((0,T)\times K)$.  The integrand is also
uniformly bounded by $2B_0$, so dominated convergence applies.
\end{proof}

\medskip
\begin{proof}[Proof of Theorem~\ref{thm:main}, parts (i)--(ii)]
\textbf{Step 1: extraction and support of the limit.}
Take any sequence $\lambda_j\downarrow0$; by
Proposition~\ref{prop:compactness}, after a further extraction
$\rho^{\lambda_j}|_K\to\rho$ in $L^2((0,T)\times K)$ and a.e., with
$\nabla\rho^{\lambda_j}\rightharpoonup\nabla\rho$ in $L^2((0,T)\times K)$.
By Corollary~\ref{cor:moments} the exterior mass is $O(\sqrt{\lambda_j})$
uniformly in $t$, so $\int_K\rho_t\dd x=1$ for a.e.\ $t$ and
$\rho^{\lambda_j}\to\rho\one_K$ in $L^1((0,T)\times\Rd)$.

\textbf{Step 2: passage in the equation.}
Fix $\psi\in C_c^\infty([0,T)\times\Rd)$ and pass to the limit in
\eqref{eq:pen-weak}.  The exterior part of the flux integral vanishes by
Lemma~\ref{lem:exterior-flux}.  On $K$,
$J^{\lambda_j}=\nabla\rho^{\lambda_j}+\rho^{\lambda_j}b[\rho^{\lambda_j}_t]
=J^{\lambda_j}_K$;
the gradient converges weakly, while
$\rho^{\lambda_j}b[\rho^{\lambda_j}_t]\to\rho\,b[\rho_t]$ strongly in
$L^1((0,T)\times K)$ (and weakly in $L^2$) by
Proposition~\ref{prop:interaction} and the strong $L^2$ convergence of the
densities.  Hence
\begin{equation}\label{eq:limit-weak-form}
  \int_K\rho_0\psi(0)\dd x
  +\int_0^T\!\!\int_K\rho\,\partial_t\psi\dd x\dd t
  =\int_0^T\!\!\int_K
  \bigl(\nabla\rho+\rho\,b[\rho_t]\bigr)\cdot\nabla\psi\dd x\dd t .
\end{equation}
Set $J:=\nabla\rho+\rho\,b[\rho_t]\in L^2((0,T)\times K)$.
Testing \eqref{eq:limit-weak-form} with
$\psi(t,x)=\eta(t)\varphi(x)$, $\eta\in C_c^\infty(0,T)$, and using the
density of restrictions of $C_c^\infty(\Rd)$ in $H^1(K)$ gives
\[
  \langle\partial_t\rho,\varphi\rangle
  =-\int_KJ\cdot\nabla\varphi\dd x,
  \qquad \varphi\in H^1(K).
\]
The bound $|\langle\partial_t\rho,\varphi\rangle|
\le\|J\|_{L^2(K)}\|\nabla\varphi\|_{L^2(K)}$ yields
$\partial_t\rho\in L^2(0,T;H^1(K)')$ and hence
$\rho\in C([0,T];L^2(K))$.
Allowing $\psi(0)\ne0$ in \eqref{eq:limit-weak-form} then gives
$\rho(0)=\rho_0$.  Thus \eqref{eq:var-form} holds, including the
variational no-flux condition.  Nonnegativity and mass follow from Step~1.

\textbf{Step 3: uniqueness, full convergence, and time-uniformity.}
By Proposition~\ref{prop:ref-unique} the limit is unique, so the full family
converges in the topologies of Steps~1--2 (every subsequence has a further
subsequence converging to the same $\rho$).  To obtain time-uniform
convergence, set \(e_\lambda(t):=\dBL(\rho_t^\lambda,\rho_t\one_K)\).
Step~2 gives \(\rho\in C([0,T];L^2(K))\), hence the limit curve is
\(\dBL\)-continuous.  Together with Lemma~\ref{lem:equicont}, this makes
\(\{e_\lambda\}\) equicontinuous on \([0,T]\).  Moreover,
\[
 \int_0^T e_\lambda(t)\dd t
 \le\|\rho^\lambda-\rho\one_K\|_{L^1((0,T)\times\Rd)}\longrightarrow0.
\]
Nonnegativity and equicontinuity on the compact interval then imply
\(\sup_{t\le T}e_\lambda(t)\to0\).
The flux convergence
$J^{\lambda}\one_K\rightharpoonup J$ was established in Step~2, the exterior
flux bound is Lemma~\ref{lem:exterior-flux}, and the exterior mass bound is
Corollary~\ref{cor:moments}.
\end{proof}

\subsection{Boundary reaction and exact cancellation}\label{sec:reaction}

{
The penalty force is singular as a function, but its induced scalar and
vector measures have uniformly bounded mass.  We retain the exact normal
factorization and then identify the vector limit directly from the already
established full-flux convergence.

\begin{lemma}[Normal factorization and localization]
\label{lem:factorization}
Set
\[
  q^\lambda:=\lambda^{-1}\dist(x)\rho^\lambda\dd x\dd t,
  \qquad
  Q^\lambda:=\lambda^{-1}\rho^\lambda
  (x-\proj_Kx)\dd x\dd t.
\]
Then $q^\lambda\ge0$, $|Q^\lambda|=q^\lambda$, and on $K^c$
\[
  \dd Q^\lambda=n(\proj_Kx)\,\dd q^\lambda.
\]
Moreover, uniformly in $\lambda\le\lambda_0$,
\begin{align}
  q^\lambda([s,t]\times\Rd)&\le C(t-s),
  &&0\le s\le t\le T,\label{eq:q-time-slab-short}\\
  q^\lambda([0,T]\times\{\dist\ge\delta\})
  &\le CT\frac{\sqrt\lambda}{\delta},
  &&\delta>0.\label{eq:q-localization-short}
\end{align}
\end{lemma}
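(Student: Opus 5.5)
The algebraic parts come straight from the definitions. On $K$ we have $x-\proj_Kx=0$, so both measures vanish there. On $K^c$, $x-\proj_Kx=\dist(x)\,n(x)$, with $n(x)=\nabla\dist(x)$ a unit vector. Hence $Q^\lambda=n\,q^\lambda$ and $|Q^\lambda|=q^\lambda\ge0$ by nonnegativity of $\rho^\lambda$ (Proposition~\ref{prop:pen-wp}). Moreover $n(x)=n(\proj_Kx)$ for $x\notin K$, since for a convex body with $C^2$ boundary the vector $x-\proj_Kx$ lies in the normal cone at $\proj_Kx$, which is the ray spanned by the outer normal. This gives the factorization $\dd Q^\lambda=n(\proj_Kx)\,\dd q^\lambda$ on $K^c$.

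For the time-slab bound \eqref{eq:q-time-slab-short}, I would use the first moment estimate of Corollary~\ref{cor:moments} with $q=1$:
\[
\int_{K^c}\dist\,\rho^\lambda_t\dd x\le C_1\lambda ,
\]
uniformly in $t\in[0,T]$ and $\lambda\le\lambda_0$. Dividing by $\lambda$ gives $q^\lambda(\{t\}\times\Rd)$-density $\le C_1$ for every $t$. Integrating over $[s,t]$ yields $q^\lambda([s,t]\times\Rd)\le C_1(t-s)$. In particular the total variation of $Q^\lambda$ is at most $C_1T$.

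For the localization bound \eqref{eq:q-localization-short}, I would use the second moment ($q=2$) and Chebyshev. On $\{\dist\ge\delta\}$ we have $\dist\le\dist^2/\delta$, so
\[
\lambda^{-1}\int_{\{\dist\ge\delta\}}\dist\,\rho^\lambda_t\dd x
\le\frac{1}{\lambda\delta}\int_{K^c}\dist^2\rho^\lambda_t\dd x
\le\frac{C_2\lambda^{3/2}}{\lambda\delta}=C_2\frac{\sqrt\lambda}{\delta}.
\]
Integrating in time gives $CT\sqrt\lambda/\delta$. For $\delta\le1$ one could alternatively use the exponential tail estimate, but the Chebyshev route is uniform in $\delta>0$ and matches the stated form.

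I do not expect a real obstacle. The only delicate point is the geometric identity $n(x)=n(\proj_Kx)$ and the a.e.\ well-definedness of $n$ on $K^c$. Both follow from convexity, uniqueness of the projection, and the $C^1$ regularity of $\dist$ off $K$ (Appendix~\ref{app:geometry}). Everything quantitative is contained in the barrier-based moment bounds of Corollary~\ref{cor:moments}, whose constants are independent of $\lambda$.
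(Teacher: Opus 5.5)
Your proposal is correct and matches the paper's proof essentially step for step. The paper obtains the factorization from $x-\proj_Kx=\dist(x)\,n(\proj_Kx)$ for $x\notin K$, the time-slab bound from the first-moment estimate of Corollary~\ref{cor:moments}, and the localization bound from $\dist\le\delta^{-1}\dist^2$ on $\{\dist\ge\delta\}$ together with the second-moment bound $C\lambda^{3/2}$, exactly as you propose.
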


\begin{proof}
For a closed convex set the metric projection is single valued and, for
$x\notin K$,
\[
  x-\proj_Kx=\dist(x)\,n(\proj_Kx).
\]
This gives the factorization and $|Q^\lambda|=q^\lambda$.  The first estimate
follows from Corollary~\ref{cor:moments} with moment order $1$:
$\int\dist\,\rho^\lambda_t\le C\lambda$ uniformly in $t$.  For the second,
use $\dist\le\delta^{-1}\dist^2$ on $\{\dist\ge\delta\}$ and the
order-$2$ moment bound
$\int\dist^2\rho^\lambda_t\le C\lambda^{3/2}$.
\end{proof}

\begin{proof}[Proof of Theorem~\ref{thm:main}, parts (iii)--(iv)]
Lemma~\ref{lem:factorization} gives a uniform total-variation bound and,
through \eqref{eq:q-localization-short}, tightness of both measure families.
In spatial distributions,
\[
  Q^\lambda
  =J^\lambda-\nabla\rho^\lambda
   -\rho^\lambda b[\rho^\lambda_t].
\]
Let $V\in C_c^\infty([0,T]\times\Rd;\Rd)$.  Theorem~\ref{thm:main}(i)--(ii),
Proposition~\ref{prop:interaction}, and the exterior mass estimate imply
\begin{align*}
  \lim_{\lambda\downarrow0}\int V\cdot\dd Q^\lambda
  &=\int_0^T\!\!\int_K V\cdot
       (\nabla\rho+\rho b[\rho_t])
    +\int_0^T\!\!\int_K\rho\,\divg V
    -\int_0^T\!\!\int_K\rho b[\rho_t]\cdot V\\
  &=\int_0^T\!\!\int_K
       \bigl(V\cdot\nabla\rho+\rho\,\divg V\bigr).
\end{align*}
For a.e.\ $t$, $\rho_t\in H^1(K)$, so
$\rho_tV(t,\cdot)\in W^{1,1}(K;\Rd)$ and Gauss--Green gives
\[
  \int_K\bigl(V\cdot\nabla\rho+\rho\,\divg V\bigr)
  =\int_\Gamma\Tr(\rho)\,V\cdot n\dd\Hm.
\]
The trace theorem makes the right-hand side integrable in time.  By density
of smooth test fields, the uniform total-variation bound, and tightness,
\begin{equation}\label{eq:Q-limit-short}
  Q^\lambda\rightharpoonup^*
  \Tr(\rho)n\,\Hm\!\restriction_\Gamma\dd t.
\end{equation}
This vector test identifies every component of the limit and, in particular,
rules out a tangential defect.

It remains to recover the scalar measures.  Let $d_s$ be the signed distance
on a two-sided $C^2$ collar $\{|d_s|<\delta_0\}$ and choose
$\chi\in C_c^1(\{|d_s|<\delta_0\})$ with
$\chi=1$ on $\{|d_s|\le\delta_0/2\}$.  For
$\phi\in C_c^1([0,T]\times\Rd)$ set
$V=\phi\chi\nabla d_s$.  On the exterior part of the collar,
$\nabla d_s=n(\proj_Kx)$ and $|\nabla d_s|=1$, hence the factorization gives
\[
  \int V\cdot\dd Q^\lambda=\int\phi\chi\,\dd q^\lambda.
\]
Since $q^\lambda$ vanishes on $K$, \eqref{eq:q-localization-short} yields
\[
  \left|\int\phi\,\dd q^\lambda-
  \int V\cdot\dd Q^\lambda\right|
  \le \|\phi\|_\infty
  q^\lambda\bigl([0,T]\times\{\dist\ge\delta_0/2\}\bigr)
  \longrightarrow0.
\]
Using \eqref{eq:Q-limit-short} and $\chi=1$, $\nabla d_s=n$ on $\Gamma$,
we obtain
\[
  \lim_{\lambda\downarrow0}\int\phi\,\dd q^\lambda
  =\int_0^T\!\!\int_\Gamma\Tr(\rho)\,\phi\dd\Hm\dd t.
\]
Uniform boundedness extends this identity from $C_c^1$ to continuous compactly
supported tests, proving
$q^\lambda\rightharpoonup^*\Tr(\rho)\Hm|_\Gamma\dd t$.

{Finally, $\rho^\lambda\to\rho\one_K$ in $L^1((0,T)\times\Rd)$
by (i), hence $\nabla\rho^\lambda\to\nabla(\rho\one_K)$ in $\mathcal D'$, and
for a.e.\ $t$ the zero-extension (Gauss--Green) formula for
$\rho_t\in H^1(K)$ gives
\[
  \nabla(\rho_t\one_K)
  =\one_K\nabla\rho_t-\Tr(\rho_t)n\Hm|_\Gamma .
\]
Moreover $\rho^\lambda b[\rho^\lambda_t]\to\one_K\rho\,b[\rho_t]$ in
$L^1((0,T)\times\Rd)$ by (i), \eqref{eq:drift-bounds}, and
Proposition~\ref{prop:interaction}.  Together with
\eqref{eq:Q-limit-short} and (ii), this proves (iv).  We stress that the
relation $\one_KJ=\nabla(\rho\one_K)+\Tr(\rho)n\Hm|_\Gamma\dd t
+\one_K\rho\,b[\rho_t]$ among the three limits is itself the Gauss--Green
formula; the content of (iv) is the separate convergence of the three
components, two of which have nontrivial singular parts on $\Gamma$.}
\end{proof}

\begin{remark}
The identification is distributional: $\Tr(\rho)$ is the boundary density
of the reaction measure, whereas the limiting interior flux has zero normal
trace.  The vector test above identifies the reaction without losing
possible tangential components; the exact normal factorization is then used
to recover the scalar measure $q^\lambda$.
{The identification itself uses full-flux convergence
and Gauss--Green; the Moreau--Yosida structure supplies the quantitative
localization through the Gaussian layer and normal factorization.}
\end{remark}

\begin{remark}[Role of the barrier]\label{rem:barrier-role}
{The pointwise barrier is needed only for the
$O(\sqrt\lambda)$ exterior and moment bounds and the interior $L^\infty$
estimate.  Without it, Lemmas~\ref{lem:basic-energy} and \ref{lem:layer} give
\[
  \int_{K^c}\rho_t^\lambda\dd x
  +\int_{K^c}\frac{\dist^2}{\lambda}\rho_t^\lambda\dd x
  \le C\lambda^{1/4}.
\]
Testing \eqref{eq:pen-weak} with $\zeta$ gives, uniformly in $\lambda$,
\[
  q^\lambda([0,T]\times\Rd)
  \le \int_K\rho_0\zeta+T(C_\zeta+B_0).
\]
These estimates replace Corollary~\ref{cor:moments} in the compactness and
identification steps.  Hence parts (ii)--(iv), and the convergence assertions
in (i), remain valid for $\rho_0\in L^2(K)$; only the exterior-mass rate
becomes $O(\lambda^{1/4})$.}
\end{remark}
}

\section{Proof of Theorem~\ref{thm:stationary-main}}
\label{sec:stationary-proof}

In the interacting case stationary states solve the Gibbs fixed-point problem
\eqref{eq:fixed-point}.

For uniqueness, Theorem~\ref{thm:stationary-main} assumes $\kappa<1$ or
positive semidefiniteness of $W$; the latter includes the MMD kernel at
every amplitude. Neither condition is required for the finite-horizon
dynamical theory of Section~\ref{sec:dynamical-proof}.

\subsection{Proof of Theorem \ref{thm:stationary-main} (i)}
\label{sec:static}
We first establish the Gibbs stability estimates needed to control the
stationary fixed points.
The coefficient \eqref{eq:dob-def} is unchanged by adding terms of the form
$\varphi(x)+\psi(y)$ to $W$. In contrast, $\|W\|_\infty$ generally changes
under such additions. Moreover,
\[
  \dob(W)\le\osc(W)\le2\|W\|_\infty .
\]

\begin{lemma}\label{lem:gibbs-lip}
Let $A,B$ be measurable functions on a measure space $(\Omega,\mu)$ with
\[
  0<\int e^{-A}\dd\mu<\infty,\qquad
  0<\int e^{-B}\dd\mu<\infty,\qquad
  A-B\in L^\infty(\mu),
\]
and set
$g_A:=e^{-A}/\int e^{-A}\dd\mu$, $g_B$ likewise.  Then
\[
  \|g_A-g_B\|_{L^1(\mu)}\le\tfrac12\osc(A-B),
\]
where the oscillation is understood in the essential sense.
The individual potentials may be unbounded, so the common penalty
$U_\lambda$ can be included in both exponents.
\end{lemma}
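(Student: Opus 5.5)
The plan is to interpolate between the two Gibbs densities along the segment of potentials joining $B$ to $A$. Then I would bound the $L^1(\mu)$ derivative of the resulting curve by a centered first moment of $D:=A-B$, and control that moment by the oscillation of $D$ through Popoviciu's variance inequality. Neither $A$ nor $B$ needs to be bounded; only $D\in L^\infty(\mu)$ enters, which is what allows the common penalty $U_\lambda$ to sit in both exponents.

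\emph{Step 1 (interpolating family).} For $t\in[0,1]$ set $Z_t:=\int e^{-B-tD}\dd\mu$ and $g_t:=e^{-B-tD}/Z_t$, so that $g_0=g_B$ and $g_1=g_A$. Since $e^{-B-tD}\le e^{\|D\|_\infty}e^{-B}$ and $e^{-B-tD}\ge e^{-\|D\|_\infty}e^{-B}$, each $Z_t$ lies in $(0,\infty)$. The map $t\mapsto e^{-B-tD}$ is differentiable pointwise, with derivative $-De^{-B-tD}$. This derivative is dominated by $\|D\|_\infty e^{\|D\|_\infty}e^{-B}\in L^1(\mu)$, so dominated convergence gives $Z_t'=-\int De^{-B-tD}\dd\mu$. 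It also makes $t\mapsto g_t$ differentiable (hence absolutely continuous) as a map into $L^1(\mu)$, with
\[
  \partial_tg_t=-\bigl(D-\mathbb E_t D\bigr)g_t,\qquad \mathbb E_tD:=\int D\,g_t\dd\mu .
\]

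\emph{Step 2 (integrate the derivative).} The fundamental theorem of calculus in $L^1(\mu)$, or equivalently Fubini applied pointwise, gives
\[
  \|g_A-g_B\|_{L^1(\mu)}\le\int_0^1\!\!\int|D-\mathbb E_tD|\,g_t\dd\mu\dd t
  \le\int_0^1\bigl(\mathrm{Var}_{g_t}(D)\bigr)^{1/2}\dd t ,
\]
where the second inequality is Cauchy--Schwarz with respect to the probability measure $g_t\mu$.

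\emph{Step 3 (oscillation bound).} Let $m,M'$ be the essential infimum and supremum of $D$, so that $\osc(D)=M'-m$; these are the same under $\mu$ and under every $g_t\mu$, because $g_t>0$ $\mu$-a.e. For any probability measure carried by $[m,M']$, Popoviciu's inequality gives variance at most $(M'-m)^2/4$. To see this, note $\mathrm{Var}(D)\le\mathbb E\bigl(D-\tfrac{m+M'}2\bigr)^2\le\bigl(\tfrac{M'-m}2\bigr)^2$. Hence $\mathrm{Var}_{g_t}(D)^{1/2}\le\tfrac12\osc(A-B)$ for every $t$, and integrating over $t\in[0,1]$ yields the claim.

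The main obstacle is only the justification in Step 1: differentiating under the integral and treating $g_t$ as an $L^1$-valued absolutely continuous curve, given that $A$ and $B$ may be unbounded. This is handled by the domination by $e^{\|D\|_\infty}e^{-B}$ noted above. If one prefers to avoid vector-valued calculus, the pointwise identity $g_A(x)-g_B(x)=\int_0^1\partial_tg_t(x)\dd t$ followed by Tonelli suffices. As a remark, the sharper bound $2\tanh(\osc(A-B)/4)$ also holds, via a two-point extremal argument on the ratio $g_A/g_B$, but it is not needed here.
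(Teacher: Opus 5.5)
Your proposal is correct and follows the paper's proof essentially line by line: the same interpolation $g_t\propto e^{-B-t(A-B)}$, the same derivative formula $\partial_tg_t=-(D-\mathbb E_tD)g_t$, Cauchy--Schwarz under $g_t\mu$, and the variance bound via the second moment about the midpoint of the essential range. Your domination by $e^{\|A-B\|_\infty}e^{-B}$ and the pointwise-plus-Tonelli integration simply spell out the differentiation under the integral that the paper asserts in one line.
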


\begin{proof}
Put $F:=A-B$ and
$g_s:=e^{-B-sF}/\int e^{-B-sF}\dd\mu$, $0\le s\le1$.
Since $F$ is bounded, differentiation under the integral is valid and
\[
  \partial_s g_s=-\Bigl(F-\int Fg_s\dd\mu\Bigr)g_s.
\]
Write $\mathbb E_s$ for expectation under $g_s\dd\mu$ and let $m$ be the
midpoint of the essential range of $F$.  Then
\[
  \|\partial_sg_s\|_{L^1(\mu)}
  =\mathbb E_s|F-\mathbb E_sF|
  \le\bigl(\mathbb E_s|F-\mathbb E_sF|^2\bigr)^{1/2}
  \le\bigl(\mathbb E_s|F-m|^2\bigr)^{1/2}
  \le\tfrac12\osc(F).
\]
Integration over $s\in[0,1]$ proves the claim.
\end{proof}

\begin{lemma}\label{lem:gibbs-contraction}
For probability densities $\rho_1,\rho_2$ on $\Rd$ and every
$\lambda\in[0,\infty)$ (with $\lambda=0$ denoting the hard-constrained map
$\mcT_0$),
\[
  \|\mcT_\lambda(\rho_1)-\mcT_\lambda(\rho_2)\|_{L^1}
  \ \le\ \kappa\,\|\rho_1-\rho_2\|_{L^1},
  \qquad
  \kappa:=\tfrac12\dob(W).
\]
\end{lemma}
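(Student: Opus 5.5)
The plan is to apply Lemma~\ref{lem:gibbs-lip} directly, with the reference measure chosen so that the penalty (or the indicator of $K$) sits in both exponents. For $\lambda>0$, take $\Omega=\Rd$ with Lebesgue measure and the potentials
\[
  A:=U_\lambda+\Phi+W*\rho_1,\qquad B:=U_\lambda+\Phi+W*\rho_2 .
\]
Then $g_A=\mcT_\lambda(\rho_1)$ and $g_B=\mcT_\lambda(\rho_2)$. For $\lambda=0$, take instead $\Omega=K$ with Lebesgue measure and drop $U_\lambda$, so that $g_A,g_B$ are the zero extensions $\mcT_0(\rho_i)$.

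First check the integrability hypotheses. $W*\rho_i$ is bounded by $\|W\|_\infty$. In the applications $\Phi$ is bounded below, and $e^{-U_\lambda}$ is integrable by Lemma~\ref{lem:layer}, so $0<\int e^{-A}<\infty$. The difference $A-B=W*(\rho_1-\rho_2)$ is bounded. (If $\Phi$ is merely assumed to satisfy Assumption~\ref{ass:coefficients}, I will note that the maps are only considered when the partition functions are finite, which is the implicit standing assumption of \eqref{eq:fixed-point}.) Lemma~\ref{lem:gibbs-lip} then gives
\[
  \|\mcT_\lambda(\rho_1)-\mcT_\lambda(\rho_2)\|_{L^1}\le\tfrac12\osc_x F,
  \qquad F(x):=\int W(x,y)\,g(y)\dd y,\quad g:=\rho_1-\rho_2 .
\]

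The key step is to bound $\osc F$ by $\dob(W)\|g\|_{L^1}$, and this is where the gauge invariance enters. Since $\int g=0$, for any $x,x'$ and any constant $c$,
\[
  F(x)-F(x')=\int\bigl(W(x,y)-W(x',y)-c\bigr)g(y)\dd y .
\]
Choose $c$ to be the midpoint of the range of $y\mapsto W(x,y)-W(x',y)$. Then the integrand factor is bounded in absolute value by $\tfrac12\osc_y(W(x,\cdot)-W(x',\cdot))\le\dob(W)$, so $|F(x)-F(x')|\le\dob(W)\|g\|_{L^1}$. Taking the supremum over $x,x'$ gives $\osc F\le\dob(W)\|g\|_{L^1}$, and therefore
\[
  \|\mcT_\lambda(\rho_1)-\mcT_\lambda(\rho_2)\|_{L^1}\le\tfrac12\dob(W)\|g\|_{L^1}=\kappa\|\rho_1-\rho_2\|_{L^1}.
\]
For $\lambda=0$ the supremum runs only over $x,x'\in K$, which is bounded by the same quantity.

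I expect no real obstacle here. The only delicate point is using the zero-mass property of $\rho_1-\rho_2$, so that the constant shift (the gauge) absorbs half of the oscillation, rather than the cruder bound $2\|W\|_\infty$.
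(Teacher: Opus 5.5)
Your proposal is correct and follows the paper's own proof: you apply Lemma~\ref{lem:gibbs-lip} with $U_\lambda$ in both exponents on $\Rd$ (or on $K$ without $U_\lambda$ when $\lambda=0$), and then bound $\osc(W*z)\le\dob(W)\|z\|_{L^1}$ by subtracting the midrange constant, which is allowed because $\int z=0$. Your parenthetical about finiteness of the partition functions is not needed, since $\nabla\Phi\in L^\infty$ gives $\Phi$ at most linear growth, and this is dominated by the Gaussian factor $e^{-U_\lambda}$.
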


\begin{proof}
For $\lambda>0$, apply Lemma~\ref{lem:gibbs-lip} with
$A=U_\lambda+\Phi+W*\rho_1$ and $B=U_\lambda+\Phi+W*\rho_2$ on $\Rd$.
For $\lambda=0$, use $A=\Phi+W*\rho_1$ and $B=\Phi+W*\rho_2$ on $K$.
In either case $A-B=W*z$, where $z:=\rho_1-\rho_2$ and $\int z\dd y=0$.  For any $x,x'$ and any constant
$\varkappa$ (using $\int z=0$),
\[
  (W*z)(x)-(W*z)(x')
  =\int\bigl(W(x,y)-W(x',y)-\varkappa\bigr)z(y)\dd y,
\]
and choosing $\varkappa$ as the midrange of
$y\mapsto W(x,y)-W(x',y)$ bounds the integrand kernel by $\dob(W)$ in
absolute value; hence $\osc(W*z)\le\dob(W)\|z\|_{L^1}$.  By
Lemma~\ref{lem:gibbs-lip},
\[
  \|\mcT_\lambda(\rho_1)-\mcT_\lambda(\rho_2)\|_{L^1}
  \le\tfrac12\osc(W*z)
  \le\tfrac12\dob(W)\|z\|_{L^1}
  =\kappa\|z\|_{L^1}. \qedhere
\]
\end{proof}

\medskip
\begin{proof}[Proof of Theorem~\ref{thm:stationary-main}(i)]
\emph{(Fixed points and upper bound{; case (a)})}
By Lemma~\ref{lem:gibbs-contraction}, $\mcT_\lambda$ ($\lambda\in[0,\infty)$)
is a $\kappa$-contraction on the complete metric space of probability
densities with the $L^1$ distance; Banach's theorem gives the unique fixed
points $\pi_\lambda$, $\pi$.  Inserting $\pi$,
\[
  \|\pi_\lambda-\pi\|_{L^1}
  \le\|\mcT_\lambda(\pi_\lambda)-\mcT_\lambda(\pi)\|_{L^1}
  +\|\mcT_\lambda(\pi)-\mcT_0(\pi)\|_{L^1}
  \le\kappa\|\pi_\lambda-\pi\|_{L^1}
  +\|\mcT_\lambda(\pi)-\mcT_0(\pi)\|_{L^1}.
\]
The second term compares Gibbs measures for the \emph{fixed} bounded
continuous potential $f_\pi:=\Phi+W*\pi$.  Set
\[
  Z_\pi:=\int_K e^{-f_\pi}\dd x,\qquad
  T_{\pi,\lambda}:=\int_{K^c}e^{-f_\pi-U_\lambda}\dd x.
\]
On $K$, $\mcT_\lambda(\pi)=Z_\pi\pi/(Z_\pi+T_{\pi,\lambda})$,
whereas $\mcT_0(\pi)=\pi$ vanishes on $K^c$.  Thus the interior deficit
and exterior mass coincide, giving
\[
  \|\mcT_\lambda(\pi)-\mcT_0(\pi)\|_{L^1}
  =\frac{2T_{\pi,\lambda}}{Z_\pi+T_{\pi,\lambda}}
  \le C\sqrt\lambda
\]
by Lemma~\ref{lem:layer} and $Z_\pi\ge |K|e^{-\|f_\pi\|_\infty}$.
Rearranging proves
$\|\pi_\lambda-\pi\|_{L^1}\le C_1(1-\kappa)^{-1}\sqrt\lambda$.

\medskip
\noindent
{\emph{(Fixed points and upper bound; case (b))}
\emph{Existence.} Put $M:=\|\Phi\|_\infty+\|W\|_\infty$.  For each fixed $\lambda>0$,
the uniform bounds on $W*\rho$ and its spatial Lipschitz constant give
local compactness of the Gibbs densities, while
$\mcT_\lambda(\rho)\le e^{2M}|K|^{-1}e^{-U_\lambda}$ controls the tails.
Thus $\mcT_\lambda$ has relatively compact range in $L^1$; for
$\lambda=0$, the same argument applies on $K$.
Continuity follows from Lemma~\ref{lem:gibbs-contraction}, so Schauder
gives fixed points for $\lambda\ge0$.

\emph{Uniqueness.} If $\rho_1,\rho_2$ are two fixed points, subtract their
logarithmic Gibbs formulas and integrate against $\rho_1-\rho_2$ on
their common support ($\Rd$ for $\lambda>0$, and $K$ for $\lambda=0$).
The normalization constants cancel because $\int(\rho_1-\rho_2)=0$, so
\[
  \int(\log\rho_1-\log\rho_2)(\rho_1-\rho_2)\dd x
  =-\iint W(x,y)(\rho_1-\rho_2)(x)(\rho_1-\rho_2)(y)\dd x\dd y\le0.
\]
The left-hand side is nonnegative, while positive semidefiniteness gives the
last inequality; hence $\rho_1=\rho_2$.

\emph{Perturbation estimate.} Set
$\beta_\lambda:=\int_{K^c}\pi_\lambda$ and
$\tilde\pi_\lambda:=\pi_\lambda\one_K/(1-\beta_\lambda)$.
This normalization puts $\tilde\pi_\lambda$ and $\pi$ on the same support
and makes their difference have zero integral, so the preceding
monotonicity argument can be applied.  The Gibbs
formula and Lemma~\ref{lem:layer} give
$\beta_\lambda\le C\sqrt\lambda$, $1-\beta_\lambda\ge c>0$, and
$\tilde\pi_\lambda,\pi\le C_0$ on $K$.
Writing \(\mathcal Z_K(\sigma):=\int_K e^{-\Phi-W*\sigma}\dd x\),
the two Gibbs formulas on \(K\), where \(U_\lambda=0\), read
\[
\begin{aligned}
 \log\tilde\pi_\lambda&=-\Phi-W*\pi_\lambda
   -\log\mathcal Z_K(\pi_\lambda),\\
 \log\pi&=-\Phi-W*\pi-\log\mathcal Z_K(\pi).
\end{aligned}
\]
Subtracting gives
\[
  (\log\tilde\pi_\lambda-\log\pi)+W*(\tilde\pi_\lambda-\pi)
  =r_\lambda+c_\lambda,
  \qquad
  \|r_\lambda\|_\infty\le2\|W\|_\infty\beta_\lambda.
\]
Here $r_\lambda:=W*(\tilde\pi_\lambda-\pi_\lambda)$ and
\(c_\lambda=\log\mathcal Z_K(\pi)-\log\mathcal Z_K(\pi_\lambda)\)
is constant in $x$; the bound follows from
$\|\tilde\pi_\lambda-\pi_\lambda\|_{L^1(\Rd)}=2\beta_\lambda$.
Multiplying by $\tilde\pi_\lambda-\pi$ and integrating, the constant drops
out and positive semidefiniteness, together with
$(\log a-\log b)(a-b)\ge(a-b)^2/\max(a,b)$, yields
\[
  C_0^{-1}\|\tilde\pi_\lambda-\pi\|_{L^2(K)}^2
  \le2\|W\|_\infty\beta_\lambda|K|^{1/2}
       \|\tilde\pi_\lambda-\pi\|_{L^2(K)}.
\]
Therefore
$\|\pi_\lambda-\pi\|_{L^1}
\le2\beta_\lambda+|K|^{1/2}\|\tilde\pi_\lambda-\pi\|_{L^2(K)}
\le C_1\sqrt\lambda$.}

\medskip
\noindent
\emph{(Lower bound)}
Extend $\pi$ by zero on $K^c$ and set
$\beta_\lambda:=\int_{K^c}\pi_\lambda\dd x$ and
$M:=\|\Phi\|_\infty+\|W\|_\infty$.
For $0<\lambda\le1$, Lemma~\ref{lem:layer} bounds the Gibbs normalizer by
\[
  Z_\lambda:=\int_{\Rd}e^{-U_\lambda-\Phi-W*\pi_\lambda}\dd x
  \le e^M(|K|+C_K\sqrt\lambda)
  \le e^M(|K|+C_K)=:C_Z.
\]
On $E_\lambda:=\{x\in K^c:0<\dist(x,K)<\sqrt\lambda\}$,
$U_\lambda\le1/2$, so $\pi_\lambda\ge C_Z^{-1}e^{-1/2-M}=:c_0>0$.
The Steiner formula for the convex outer parallel sets
$K_r:=\{x:\dist(x,K)<r\}$ gives
\begin{equation}\label{eq:exterior-collar-lower}
  |E_\lambda|=|K_{\sqrt\lambda}|-|K|
  \ge\Hm(\Gamma)\sqrt\lambda.
\end{equation}
Consequently,
\begin{equation}\label{eq:beta-lower}
  \beta_\lambda\ge c_0|E_\lambda|\ge b_0\sqrt\lambda,
  \qquad b_0:=c_0\Hm(\Gamma)>0.
\end{equation}
Since $\int_K(\pi_\lambda-\pi)=-\beta_\lambda$,
\[
  \|\pi_\lambda-\pi\|_{L^1(\Rd)}
  \ge\left|\int_K(\pi_\lambda-\pi)\dd x\right|+\beta_\lambda
  =2\beta_\lambda\ge2b_0\sqrt\lambda.
\]
This proves the lower bound with $c_1=2b_0$, with the stated dependence
on $K,d,\|\Phi\|_\infty,\|W\|_\infty$.

\medskip
\noindent
\emph{($\pi$ is stationary)}  Since $b[\pi]=\nabla(\Phi+W*\pi)$ and
$\log\pi+\Phi+W*\pi$ is constant on $K$ by the fixed-point property,
$\nabla\pi+\pi\,b[\pi]=\pi\nabla\bigl(\log\pi+\Phi+W*\pi\bigr)=0$ in $K$,
so \eqref{eq:var-form} holds with $\partial_t\pi=0$: $\pi$ is a stationary
variational solution.

\medskip
\noindent
\emph{(Uniqueness among stationary variational solutions)}
Let $\bar\rho$ be any stationary variational solution, i.e.\
$\bar\rho\in H^1(K)$, $\bar\rho\ge0$, $\int_K\bar\rho=1$, and
\begin{equation}\label{eq:stat-weak}
  \int_K\bigl(\nabla\bar\rho+\bar\rho\,\nabla f\bigr)\cdot\nabla\varphi
  \dd x=0
  \qquad\text{for all }\varphi\in H^1(K),
  \qquad f:=\Phi+W*\bar\rho\in C^2(\overline K).
\end{equation}
Equation \eqref{eq:stat-weak} is the stationary \emph{linear} reflected
Fokker--Planck equation with the frozen potential $f$, whose Gibbs density
$g:=e^{-f}/\int_Ke^{-f}$ satisfies $c\le g\le C$ on $\overline K$ and
$\nabla g+g\nabla f=0$; in particular $g$ also satisfies
\eqref{eq:stat-weak}.  Write $\bar\rho=gv$ with
$v=\bar\rho/g\in H^1(K)$ (allowed since $g\in C^1(\overline K)$ is bounded
away from zero).  Then
$\nabla\bar\rho+\bar\rho\nabla f=g\nabla v+v(\nabla g+g\nabla f)=g\nabla v$,
and \eqref{eq:stat-weak} becomes
$\int_Kg\,\nabla v\cdot\nabla\varphi=0$ for all $\varphi\in H^1(K)$.
Choosing $\varphi=v$ gives $\int_Kg|\nabla v|^2=0$, so $v$ is constant and,
by normalization, $\bar\rho=g$.  Thus every stationary variational solution
is the Gibbs measure of its own frozen potential, i.e.\ a fixed point of
$\mcT_0$; by uniqueness of the fixed point, $\bar\rho=\pi$.
\end{proof}

\subsection{Proof of Theorem \ref{thm:stationary-main} (ii)}
The upper bound has two steps: project the exterior layer onto $\Gamma$,
then redistribute mass within $K$.  The squared projection cost is
$O(\lambda^{3/2})$, giving a $W_2$ cost of $O(\lambda^{3/4})$.
The larger $O(\sqrt\lambda)$ scale comes from the redistribution step,
which we control by a negative Sobolev norm.

For the proof, set
\begin{equation}\label{eq:stationary-stability-constant}
  C_{\mathrm{st}}:=
  \begin{cases}
    C_1(1-\kappa)^{-1}, & \text{in case (a)},\\
    C_1, & \text{in case (b)},
  \end{cases}
\end{equation}
where $C_1$ is the constant from part~(i).
 
Throughout this subsection $\Gamma:=\partial K$, $\Omega:=\operatorname{int}K$,
$d_K(x):=\dist(x,K)$, and
\[
  \mu_\lambda:=\pi_\lambda\dd x,\qquad \mu:=\pi\dd x .
\]
Since $K$ is compact and convex with $C^2$ boundary, $\Omega$ is a bounded
convex $C^2$ domain, $|\Gamma|=0$, and $\proj_K$ maps $\Rd$ onto $K$ with
$\proj_K(K^c)\subseteq\Gamma$.  We write $\mcP(K)$ for the Borel probability
measures on $K$ (which may charge $\Gamma$), $\mathcal M_+(\Gamma)$ for the
finite nonnegative Borel measures on $\Gamma$, and, as elsewhere,
$H^1(K):=H^1(\Omega)$, $L^p(K):=L^p(\Omega)$.  We use repeatedly the
Poincar\'e inequality
\begin{equation}\label{eq:W2-poincare}
  \|\varphi\|_{L^2(K)}\le C_P\|\nabla\varphi\|_{L^2(K)}
  \qquad\Bigl(\varphi\in H^1(K),\ \textstyle\int_K\varphi\dd x=0\Bigr)
\end{equation}
and the trace inequality $\|\Tr\varphi\|_{L^1(\Gamma)}\le C_T\|\varphi\|_{H^1(K)}$,
both valid on the bounded convex Lipschitz domain $\Omega$.  Finally, $C$
denotes a constant depending only on $K$, $d$, $\|\Phi\|_\infty$ and
$\|W\|_\infty$, which may change from line to line.
 
The upper bound rests on two lemmas.  The first is a Benamou--Brenier
estimate in the spirit of Peyre \cite{Peyre2018}: the absolutely
continuous parts of both measures have densities bounded below, while one
measure may additionally carry a singular part on $\Gamma$.  This is exactly
what allows the exterior layer to be pushed \emph{onto} $\Gamma$ rather than
reflected into $K$.
 
\begin{lemma}
\label{lem:Hminus1-transport}
Let $\nu=f\dd x\in\mcP(K)$ with $f\in L^2(K)$ and
$f\ge c_*>0$ a.e.\ on $K$, and let
\(
  m=h\dd x+\sigma\in\mcP(K)\)
  where
\[
  h\in L^2(K),\quad h\ge c_*>0\ \text{a.e.\ on }K,
  \qquad
  \sigma\in\mathcal M_+(\Gamma)\ \text{ with }\
  \sigma\le\Lambda\,\Hm\!\restriction_\Gamma
\]
for some $\Lambda<\infty$.  For $\varphi\in H^1(K)$ put
\[
  \langle m-\nu,\varphi\rangle
  :=\int_K(h-f)\varphi\dd x+\int_\Gamma\Tr\varphi\dd\sigma ,
\]
which is well defined and finite, and set
\[
  \|m-\nu\|_{\dot H^{-1}(K)}
  :=\sup\Bigl\{\,\bigl|\langle m-\nu,\varphi\rangle\bigr|:\
  \varphi\in H^1(K),\ \int_K\varphi\dd x=0,\
  \|\nabla\varphi\|_{L^2(K)}\le1\Bigr\}
  \ \ (<\infty).
\]
Then
\[
  W_2(m,\nu)\ \le\ c_*^{-1/2}\,\|m-\nu\|_{\dot H^{-1}(K)} .
\]
\end{lemma}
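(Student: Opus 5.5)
We follow Peyre's argument: dualize $W_2^2$ and use the Benamou--Brenier representation along the linear interpolation. Set $m_s:=(1-s)\nu+sm$ for $s\in[0,1]$. Its absolutely continuous part is $(1-s)f+sh\ge c_*$ a.e.\ on $K$, and its singular part $s\sigma$ lives on $\Gamma$. The plan is to build a single velocity field $v=\nabla u/g_s$ that transports $\nu$ to $m$, with kinetic energy bounded by $c_*^{-1}\|m-\nu\|_{\dot H^{-1}(K)}^2$. Here $u$ solves a Neumann problem whose boundary datum is the singular part.

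\textbf{Step 1 (elliptic potential).} Define the functional $\ell(\varphi):=\langle m-\nu,\varphi\rangle$ on $\{\varphi\in H^1(K):\int_K\varphi=0\}$. It is bounded, since $h-f\in L^2$, $\sigma\le\Lambda\Hm$, and the trace inequality $\|\Tr\varphi\|_{L^1(\Gamma)}\le C_T\|\varphi\|_{H^1}$ holds together with the Poincar\'e inequality \eqref{eq:W2-poincare}. It also annihilates constants, because both measures are probability measures. By Riesz in the Hilbert space $\dot H^1(K)$ (mean-zero functions, inner product $\int\nabla\cdot\nabla$), there is a $u$ with
\[
  \int_K\nabla u\cdot\nabla\varphi\dd x=\ell(\varphi)\quad\forall\varphi\in H^1(K),
  \qquad \|\nabla u\|_{L^2(K)}=\|m-\nu\|_{\dot H^{-1}(K)} .
\]
Read distributionally, this says $-\divg(\nabla u)=h-f$ in $K$ with Neumann datum $-\nabla u\cdot n=\sigma$ on $\Gamma$, up to sign conventions. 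Equivalently, the vector field $E:=\nabla u\,\one_K$ (with sign chosen suitably) satisfies $\partial_s m_s+\divg E=0$ in duality with $C^1(\overline K)$ (indeed $H^1(K)$) test functions. The boundary mass therefore enters through the normal trace of $E$ and not through any boundary velocity.

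\textbf{Step 2 (Benamou--Brenier bound).} The pair $(m_s,E)$ solves the continuity equation on $\overline K$ from $\nu$ to $m$. Since $E$ is absolutely continuous with respect to Lebesgue measure, the Benamou--Brenier action only involves the absolutely continuous density $g_s:=(1-s)f+sh\ge c_*$:
\[
  W_2^2(m,\nu)\le\int_0^1\!\!\int_K\frac{|E|^2}{g_s}\dd x\dd s
  \le c_*^{-1}\|\nabla u\|_{L^2(K)}^2 .
\]
The singular part $s\sigma$ carries no momentum, which only lowers the action, so it is harmless. Taking square roots gives the claim.

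\textbf{Main obstacle.} The delicate point is justifying the Benamou--Brenier inequality $W_2^2\le\int\!\int|E|^2/g_s$ for measures on the closed convex set $K$ that charge $\Gamma$, with a flux that has a nonzero normal trace. First I would avoid boundary issues by working on $\Rd$ with test functions $\varphi\in C_c^\infty(\Rd)$. Their restrictions lie in $H^1(K)$, so the continuity equation holds in $\mathcal D'((0,1)\times\Rd)$ with $E$ extended by zero outside $K$. Then I would apply the standard superposition/Benamou--Brenier result (e.g.\ \cite{AGS}, Thm.~8.3.1, or \cite{Santambrogio}, Thm.~5.28) for narrowly continuous curves in $\mcP_2(\Rd)$ with $\int_0^1\|v_s\|^2_{L^2(m_s)}\dd s<\infty$. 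Here $v_s:=E/m_s$ is the density of $E$ with respect to $m_s$; it is $E/g_s$ on the absolutely continuous part and $0$ on the singular part.

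Narrow continuity of $s\mapsto m_s$ is immediate, since the interpolation is linear. Finiteness of the action follows from $\nabla u\in L^2$ and $g_s\ge c_*$. This yields $W_2(m,\nu)\le\int_0^1\|v_s\|_{L^2(m_s)}\dd s\le c_*^{-1/2}\|\nabla u\|_{L^2}$, which completes the proof.
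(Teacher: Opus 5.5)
Your proposal is correct and follows the paper's proof essentially step for step. Both arguments use the Riesz/weak-Neumann potential $u$ on mean-zero $H^1(K)$ with $\|\nabla u\|_{L^2(K)}=\|m-\nu\|_{\dot H^{-1}(K)}$, the linear interpolation with flux $\nabla u\,\one_K\,\dd x$ and velocity $\nabla u/g_s$ set to zero on $\Gamma$, the continuity equation tested against $C_c^\infty(\Rd)$, and \cite[Thm.~8.3.1]{AGS}. The only differences are that you interpolate from $\nu$ to $m$ rather than from $m$ to $\nu$ and use the opposite sign convention for $u$, which is immaterial.
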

 
{
\begin{proof}
For mean-zero $\varphi\in H^1(K)$, Poincar\'e and the trace inequality give
\[
  |\langle m-\nu,\varphi\rangle|
  \le C\bigl(\|h-f\|_{L^2(K)}+\Lambda\bigr)
        \|\nabla\varphi\|_{L^2(K)},
\]
so $N:=\|m-\nu\|_{\dot H^{-1}(K)}<\infty$.  On the mean-zero Hilbert space
$H^1(K)/\mathbb R$, the Riesz theorem therefore provides $u$ such that
\begin{equation}\label{eq:neumann-weak}
  \int_K\nabla u\cdot\nabla\varphi\dd x
  =-\langle m-\nu,\varphi\rangle
  \qquad(\varphi\in H^1(K)),
\end{equation}
with $\|\nabla u\|_{L^2(K)}=N$; the identity extends to constants because
$m$ and $\nu$ have the same mass.

Set $\rho_t=(1-t)m+t\nu$ and
$a_t=(1-t)h+tf\ge c_*$.  {Define
$v_t=\nabla u/a_t$ in $K$ and $v_t=0$ on $\Gamma$.  After extension by
zero outside $K$, the vector measure
$j_t:=v_t\rho_t=\nabla u\,\mathcal L^d|_K$ satisfies $j_t\ll\rho_t$
with Radon--Nikodym derivative $v_t$.}  For every
$\varphi\in C_c^\infty(\Rd)$, \eqref{eq:neumann-weak} gives
\[
  \frac{\dd}{\dd t}\int_K\varphi\dd\rho_t
  =\langle\nu-m,\varphi\rangle
  =\int_K\nabla\varphi\cdot v_t\dd\rho_t.
\]
{Thus
$\partial_t\rho_t+\nabla\!\cdot j_t=0$ in $\mathcal D'(\Rd)$; the weak
Neumann identity already accounts for the singular boundary part of
$\rho_t$.}  Moreover,
\[
  \int_K|v_t|^2\dd\rho_t
  =\int_K\frac{|\nabla u|^2}{a_t}\dd x
  \le c_*^{-1}N^2.
\]
Because $K$ is compact, $t\mapsto\rho_t$ is narrowly continuous in
$\mathcal P_2(\Rd)$ and the action above is finite.  The
Benamou--Brenier formula (equivalently, \cite[Thm.~8.3.1]{AGS}) therefore
yields
\[
  W_2^2(m,\nu)
  \le\int_0^1\!\int_K|v_t|^2\dd\rho_t\dd t
  \le c_*^{-1}N^2.
\]
\end{proof}
}
 
The second lemma states that projecting the exterior layer onto $\Gamma$
produces a surface measure of density $O(\sqrt\lambda)$; convexity makes the
normal coordinates global, so no collar radius is involved.
 
\begin{lemma}\label{lem:projected-layer}
Let $K$ satisfy Assumption~\ref{ass:geometry} and let $g:\Rd\to[0,\infty)$ be
measurable with $g(x)\le C_G\,e^{-d_K(x)^2/(2\lambda)}$ for all $x$.  Then
$(\proj_K)_\#\bigl(g\one_{K^c}\dd x\bigr)\in\mathcal M_+(\Gamma)$ and, for
$0<\lambda\le1$,
\[
  (\proj_K)_\#\bigl(g\one_{K^c}\dd x\bigr)
  \ \le\ C\,C_G\sqrt\lambda\;\Hm\!\restriction_\Gamma
  \qquad\text{as measures},
\]
with $C=C(K,d)$.
\end{lemma}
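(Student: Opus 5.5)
We use the global normal coordinates of the convex body $K$. Let $\Psi:\Gamma\times(0,\infty)\to K^c$, $\Psi(y,r):=y+r\,n(y)$. For a closed convex set, every $x\notin K$ satisfies $x=\proj_Kx+d_K(x)\,n(\proj_Kx)$. Since the outer normal at a $C^2$ point is unique, $\Psi$ is a bijection onto $K^c$, with inverse $x\mapsto(\proj_Kx,d_K(x))$, and $\proj_K\circ\Psi(y,r)=y$. It is also $C^1$, since $n\in C^1(\Gamma)$ by the $C^2$ regularity of $\Gamma$.

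\textbf{Jacobian.} In principal curvature coordinates on $\Gamma$, the Jacobian of $\Psi$ relative to $\Hm\otimes\dd r$ is
\[
  J(y,r)=\prod_{i=1}^{d-1}\bigl(1+r\kappa_i(y)\bigr).
\]
Convexity gives $\kappa_i\ge0$, and $\kappa_i\le\bar\kappa=\bar\kappa(K)$ by compactness and $C^2$ regularity. Hence
\[
  1\le J(y,r)\le(1+\bar\kappa r)^{d-1}\quad\text{for all }r>0.
\]
This bound is global in $r$ because no focal points occur outside a convex set, so no collar radius is needed. The area formula then gives, for every Borel set $A\subseteq\Gamma$,
\[
  (\proj_K)_\#\bigl(g\one_{K^c}\dd x\bigr)(A)
  =\int_{K^c}\one_A(\proj_Kx)\,g(x)\dd x
  =\int_A\int_0^\infty g\bigl(\Psi(y,r)\bigr)J(y,r)\dd r\dd\Hm(y).
\]
This shows that the pushforward is a nonnegative measure concentrated on $\Gamma$ with density $\int_0^\infty g(\Psi(y,r))J(y,r)\dd r$ with respect to $\Hm$. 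It is Borel because $\proj_K$ is continuous and maps $K^c$ into $\Gamma$.

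\textbf{Gaussian integral.} Using $g\le C_Ge^{-r^2/2\lambda}$ on $\Psi(y,r)$, the density is at most
\[
  C_G\int_0^\infty e^{-r^2/2\lambda}(1+\bar\kappa r)^{d-1}\dd r .
\]
Substituting $r=\sqrt\lambda\,s$ and using $\lambda\le1$ gives
\[
  C_G\sqrt\lambda\int_0^\infty e^{-s^2/2}(1+\bar\kappa s)^{d-1}\dd s
  =C(K,d)\,C_G\sqrt\lambda ,
\]
which is the claimed measure inequality.

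\textbf{Main obstacle.} The one step needing care is justifying the area formula and the Jacobian bound globally in $r$. One could verify these directly: $D\Psi(y,r)=\mathrm{Id}_{T_y\Gamma}+r\,\mathrm{d}n(y)$ on tangent directions and $n(y)$ in the normal direction, and the shape operator $\mathrm{d}n$ is symmetric and positive semidefinite by convexity. Alternatively, one can work with the parallel surfaces $\{d_K=r\}$. Use the coarea formula with $|\nabla d_K|=1$ on $K^c$, and note that the restriction of $\proj_K$ to $\{d_K=r\}$ is a bijection onto $\Gamma$ whose inverse $y\mapsto y+rn(y)$ has surface Jacobian $\prod_i(1+r\kappa_i)\le(1+\bar\kappa r)^{d-1}$. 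This is the same Steiner-polynomial bound underlying Lemma~\ref{lem:layer}, so that lemma's argument can be invoked essentially verbatim, now localized to Borel subsets $A\subseteq\Gamma$.
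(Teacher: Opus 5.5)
Your proposal is correct and follows the paper's proof essentially line by line. Both use the global normal map $\Psi(y,s)=y+s\,n(y)$ as a bijection onto $K^c$ with $\proj_K\circ\Psi(y,s)=y$, the tube-formula Jacobian $\prod_i(1+s\kappa_i)\le(1+\kappa_* s)^{d-1}$ obtained from convexity, and the substitution $s=\sqrt\lambda\,t$, where $\lambda\le1$ is used to bound $(1+\kappa_*\sqrt\lambda\,t)^{d-1}$ by $(1+\kappa_* t)^{d-1}$; so in your write-up the last step is an inequality, not an equality.
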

 
\begin{proof}
That the pushforward is carried by $\Gamma$ is immediate from
$\proj_K(K^c)\subseteq\Gamma$.  Since $K$ is compact and convex with $C^2$
boundary, the outward normal map
\[
  \Psi:\Gamma\times(0,\infty)\to K^c,\qquad \Psi(y,s):=y+s\,n(y),
\]
is a bijection with $\proj_K\Psi(y,s)=y$ and $d_K(\Psi(y,s))=s$: indeed every
$x\in K^c$ satisfies $x=\proj_Kx+d_K(x)\,n(\proj_Kx)$, and conversely
$n(y)$ lies in the normal cone of $K$ at $y$, so $\proj_K\Psi(y,s)=y$.  As
$n\in C^1(\Gamma;\R^d)$, the tube formula gives
\[
  \dd x=\prod_{i=1}^{d-1}\bigl(1+s\kappa_i(y)\bigr)\dd s\dd\Hm(y),
  \qquad
  0\le\kappa_i(y)\le\kappa_*:=\max_\Gamma\max_i\kappa_i<\infty,
\]
the principal curvatures being nonnegative by convexity.  Hence, for every
Borel set $A\subseteq\Gamma$,
\begin{align}
    (\proj_K)_\#\bigl(g\one_{K^c}\dd x\bigr)(A)
  &=\int_A\!\!\int_0^\infty g\bigl(\Psi(y,s)\bigr)
    \prod_{i}\bigl(1+s\kappa_i(y)\bigr)\dd s\dd\Hm(y)
     \nonumber \\
  &\le C_G\,\Hm(A)\int_0^\infty e^{-s^2/(2\lambda)}(1+\kappa_*s)^{d-1}\dd s,
  \nonumber
\end{align}
and the substitution $s=\sqrt\lambda\,t$ bounds the last integral by
$\sqrt\lambda\int_0^\infty e^{-t^2/2}(1+\kappa_*t)^{d-1}\dd t=C\sqrt\lambda$
for $0<\lambda\le1$.
\end{proof}
 
\medskip
\noindent\textit{Proof of Theorem \ref{thm:stationary-main} (ii).}
 
\medskip
\noindent\emph{(Upper bound)}
We prove the upper bound in cases (a) and (b) simultaneously,
using the constant $C_{\mathrm{st}}$ defined in
\eqref{eq:stationary-stability-constant}.
 
\medskip
\noindent\textbf{Step 1: uniform Gibbs bounds and an interior $L^\infty$
estimate.}
Set $P_\lambda:=\Phi+W*\pi_\lambda$, $P:=\Phi+W*\pi$,
$M:=\|\Phi\|_\infty+\|W\|_\infty$, and
\[
  Z_\lambda:=\int_{\Rd}e^{-U_\lambda-P_\lambda}\dd x,
  \qquad
  Z_0:=\int_Ke^{-P}\dd x .
\]
Since $\pi_\lambda,\pi$ are probability densities,
$\|P_\lambda\|_\infty,\|P\|_\infty\le M$; since $U_\lambda\equiv0$ on $K$,
\[
  |K|e^{-M}\le Z_0\le|K|e^{M},
  \qquad
  |K|e^{-M}\le Z_\lambda\le e^{M}\bigl(|K|+C\sqrt\lambda\bigr)\le C,
\]
the upper bound on $Z_\lambda$ by Lemma~\ref{lem:layer} with $q=0$.
Consequently there are $0<c_0\le C_0<\infty$, independent of
$\lambda\in(0,1]$, with
\begin{equation}\label{eq:W2-gibbs-bounds}
\begin{aligned}
c_0&\le\pi_\lambda\le C_0\quad\text{and}\quad
c_0\le\pi\le C_0\quad\text{on }K,\\
\pi_\lambda(x)&\le C_G\,e^{-d_K(x)^2/(2\lambda)}\quad\text{on }\Rd.
\end{aligned}
\end{equation}
where $C_G:=e^{2M}/|K|$.  Next, by part~(i),
\[
  \|P_\lambda-P\|_{L^\infty(\Rd)}
  \le\|W\|_\infty\|\pi_\lambda-\pi\|_{L^1(\Rd)}
  \le C\,{C_{\mathrm{st}}}\sqrt\lambda,
\]
and therefore
\[
\begin{aligned}
|Z_\lambda-Z_0|
&\le\int_K\bigl|e^{-P_\lambda}-e^{-P}\bigr|\dd x
 +\int_{K^c}e^{-U_\lambda-P_\lambda}\dd x\\
&\le C\|P_\lambda-P\|_{L^\infty(\Rd)}+C\sqrt\lambda
 \le C\,{C_{\mathrm{st}}}\sqrt\lambda.
\end{aligned}
\]
again by Lemma~\ref{lem:layer}.  Inserting these into the two Gibbs formulas
and using the uniform two-sided bounds on $Z_\lambda,Z_0$ yields
\begin{equation}\label{eq:W2-interior-Linfty}
  \|\pi_\lambda-\pi\|_{L^\infty(K)}\ \le C\,{C_{\mathrm{st}}}\sqrt\lambda.
\end{equation}
 
\medskip
\noindent\textbf{Step 2: projection of the exterior mass.}
Write $T:=\proj_K$ and define
\[
  \sigma_\lambda:=T_\#\bigl(\pi_\lambda\one_{K^c}\dd x\bigr)
  \in\mathcal M_+(\Gamma),
  \qquad
  \mu_\lambda^\sharp:=\pi_\lambda\one_K\dd x+\sigma_\lambda .
\]
Then $\mu_\lambda^\sharp\in\mcP(K)$, since its total mass is
$\mu_\lambda(K)+\mu_\lambda(K^c)=1$.  As $T$ is Borel and
$|x-Tx|=d_K(x)$, the plan $(\mathrm{id},T)_\#\mu_\lambda$ belongs to
$\Pi(\mu_\lambda,\mu_\lambda^\sharp)$, so by \eqref{eq:W2-gibbs-bounds} and
Lemma~\ref{lem:layer} with $q=2$,
\begin{equation}\label{eq:W2-projection-cost}
  W_2^2(\mu_\lambda,\mu_\lambda^\sharp)
  \le\int_{\Rd}|x-Tx|^2\pi_\lambda\dd x
  =\int_{K^c}d_K^2\,\pi_\lambda\dd x
  \le C_GC\lambda^{3/2},
\end{equation}
so that $W_2(\mu_\lambda,\mu_\lambda^\sharp)\le C\lambda^{3/4}$.  Note that no
splitting into a collar and a far exterior region is needed: the Gaussian
bound \eqref{eq:W2-gibbs-bounds} controls the whole of $K^c$ at once.
Lemma~\ref{lem:projected-layer}, applied with $g=\pi_\lambda$, gives in
addition
\begin{equation}\label{eq:W2-surface-density}
  \sigma_\lambda\ \le\ C\sqrt\lambda\;\Hm\!\restriction_\Gamma .
\end{equation}
 
\medskip
\noindent\textbf{Step 3: $\mu_\lambda^\sharp$ is $O(\sqrt\lambda)$-close to
$\mu$ in $\dot H^{-1}(K)$.}
Both $\mu_\lambda^\sharp$ and $\mu$ are probability measures on $K$.  Let
$\varphi\in H^1(K)$ with $\int_K\varphi\dd x=0$ and
$\|\nabla\varphi\|_{L^2(K)}\le1$.  In the notation of
Lemma~\ref{lem:Hminus1-transport} (with $h=\pi_\lambda|_K$, $f=\pi$,
$\sigma=\sigma_\lambda$),
\[
  \langle\mu_\lambda^\sharp-\mu,\varphi\rangle
  =\int_K(\pi_\lambda-\pi)\varphi\dd x
  +\int_\Gamma\Tr\varphi\dd\sigma_\lambda .
\]
By \eqref{eq:W2-interior-Linfty}, Cauchy--Schwarz and
\eqref{eq:W2-poincare},
\[
  \Bigl|\int_K(\pi_\lambda-\pi)\varphi\dd x\Bigr|
  \le\|\pi_\lambda-\pi\|_{L^\infty(K)}\,|K|^{1/2}\,\|\varphi\|_{L^2(K)}
  \le C\,{C_{\mathrm{st}}}\sqrt\lambda\,\|\nabla\varphi\|_{L^2(K)} ,
\]
while \eqref{eq:W2-surface-density}, the trace inequality and
\eqref{eq:W2-poincare} give
\[
  \Bigl|\int_\Gamma\Tr\varphi\dd\sigma_\lambda\Bigr|
  \le C\sqrt\lambda\,\|\Tr\varphi\|_{L^1(\Gamma)}
  \le C\sqrt\lambda\,\|\varphi\|_{H^1(K)}
  \le C\sqrt\lambda\,\|\nabla\varphi\|_{L^2(K)} .
\]
Taking the supremum over all such $\varphi$,
\begin{equation}\label{eq:W2-Hminus1}
  \|\mu_\lambda^\sharp-\mu\|_{\dot H^{-1}(K)}
  \ \le C\,{C_{\mathrm{st}}}\sqrt\lambda.
\end{equation}
 
\medskip
\noindent\textbf{Step 4: interior redistribution and conclusion.}
By \eqref{eq:W2-gibbs-bounds} we have $\pi\ge c_0$ and
$\pi_\lambda|_K\ge c_0$ on $K$, and \eqref{eq:W2-surface-density} shows that
$\mu_\lambda^\sharp$ satisfies the hypotheses of
Lemma~\ref{lem:Hminus1-transport} with $\Lambda=C\sqrt\lambda$.  Hence, by
that lemma and \eqref{eq:W2-Hminus1},
\[
  W_2(\mu_\lambda^\sharp,\mu)
  \ \le\ c_0^{-1/2}\,\|\mu_\lambda^\sharp-\mu\|_{\dot H^{-1}(K)}
  \ \le C\,{C_{\mathrm{st}}}\sqrt\lambda.
\]
Combining with \eqref{eq:W2-projection-cost} and the triangle inequality, and
using $\lambda^{3/4}\le\sqrt\lambda$ for $0<\lambda\le1$,
\[
  W_2(\mu_\lambda,\mu)
  \ \le\ W_2(\mu_\lambda,\mu_\lambda^\sharp)+W_2(\mu_\lambda^\sharp,\mu)
  \ \le C\lambda^{3/4}+C\,{C_{\mathrm{st}}}\sqrt\lambda.
\]
{In case (a), tracking $C_{\mathrm{st}}$ gives the stated
$C_2(1-\kappa)^{-1}\sqrt\lambda$ bound; in case (b) it gives
$C_2\sqrt\lambda$.}
 
\bigskip
\noindent\emph{(Lower bound)}
The exterior mass creates an equal total deficit inside $K$.  The Gibbs
ratio bound below prevents this deficit from being concentrated in an
arbitrarily thin interior strip.  A Lipschitz test across a fixed-width
strip therefore detects an $O(\sqrt\lambda)$ discrepancy already in $W_1$.
Recall $\beta_\lambda:=\int_{K^c}\pi_\lambda\dd x$ and that, for probability
densities $\rho_1,\rho_2$,
\begin{equation}\label{eq:osc-interaction}
  \osc_{x\in K}\bigl(W*(\rho_1-\rho_2)\bigr)(x)
  \le\dob(W)\,\|\rho_1-\rho_2\|_{L^1(\Rd)} .
\end{equation}
 
\medskip
\noindent\textbf{Step 1: exterior mass and $L^1$ discrepancy.}
By \eqref{eq:beta-lower} and \eqref{pi_lambda--pi--L1},
\begin{equation}\label{eq:W2-lower-inputs}
  \beta_\lambda\ \ge\ b_0\sqrt\lambda,
  \qquad
  \|\pi_\lambda-\pi\|_{L^1(\Rd)}\ \le {C_{\mathrm{st}}}\sqrt\lambda,
\end{equation}
with $b_0=b_0(K,d,\|\Phi\|_\infty,\|W\|_\infty)>0$.
 
\medskip
\noindent\textbf{Step 2: on $K$, $\pi_\lambda/\pi$ is almost constant.}
Let $q_\lambda:=\pi_\lambda/\pi$ on $K$.  Since $U_\lambda\equiv0$ on $K$, the
two Gibbs formulas give
$\log q_\lambda=c_\lambda-\bigl(W*(\pi_\lambda-\pi)\bigr)$ on $K$ for a
constant $c_\lambda$, so \eqref{eq:osc-interaction} and
\eqref{eq:W2-lower-inputs} yield
\[
  \osc_K\log q_\lambda\ \le\ A\sqrt\lambda,
  \qquad
  {A:=\dob(W)\,C_{\mathrm{st}}.}
\]
Since $\int_Kq_\lambda\dd\pi=\int_K\pi_\lambda\dd x=1-\beta_\lambda$, the
value $1-\beta_\lambda$ lies between $\inf_Kq_\lambda$ and $\sup_Kq_\lambda$,
whence
\[
  e^{-A\sqrt\lambda}(1-\beta_\lambda)\ \le\ q_\lambda\ \le\
  e^{A\sqrt\lambda}(1-\beta_\lambda)\qquad\text{on }K .
\]
Set
\[
  \lambda_*:=\min\{1,A^{-2}\}\in(0,1]
  \qquad(\lambda_*:=1\ \text{ if }A=0),
  \qquad
  {C_A:=2A} .
\]
For $0<\lambda\le\lambda_*$ we have $A\sqrt\lambda\le1$, so
$e^{A\sqrt\lambda}-1\le(e-1)A\sqrt\lambda\le {C_A}\sqrt\lambda$ by convexity,
and since $0\le1-\beta_\lambda\le1$,
\begin{equation}\label{eq:W2-q-additive}
  \bigl|q_\lambda(x)-(1-\beta_\lambda)\bigr|\ \le\ {C_A}\sqrt\lambda,
  \qquad x\in K .
\end{equation}
 
{
\medskip
\noindent\textbf{Step 3: one Lipschitz test across a fixed boundary strip.}
Let $r(x):=\dist(x,K^c)$, so $r$ is $1$-Lipschitz on $\Rd$, vanishes on
$K^c$, and equals the boundary depth on $K$.  Put
\[
  A_t:=\{x\in K:r(x)\le t\},
  \qquad F(t):=\pi(A_t).
\]
Let $\rho_K$ be the inradius of $K$.  For $0<s<\rho_K$, the inner parallel
body $K_{-s}=\{r\ge s\}$ is convex and
$\Hm(\partial K_{-s})\le\Hm(\Gamma)$.  Since $|\nabla r|=1$ a.e., the
coarea formula and the uniform upper bound $\pi\le C_0$ give
\[
  F(t)\le C_0|A_t|
  \le C_0\Hm(\Gamma)t=:M_rt,
  \qquad 0\le t<\rho_K.
\]
Choose
\[
  t_*:=\min\left\{\frac{\rho_K}{2},
  \frac{b_0}{2M_r(b_0+{C_A})}\right\}>0,
  \qquad
  F(t_*)\le\frac{b_0}{2(b_0+{C_A})}.
\]
The function $\varphi=(t_*-r)_+$ is $1$-Lipschitz.  Since
$\mu_\lambda|_K=q_\lambda\mu$ and $\varphi=t_*$ on $K^c$,
\begin{align*}
  \int\varphi\dd(\mu_\lambda-\mu)
  &=t_*\beta_\lambda
    +\int_{A_{t_*}}\varphi(q_\lambda-1)\dd\pi\\
  &=\beta_\lambda\left(t_*-
      \int_{A_{t_*}}\varphi\dd\pi\right)
    +\int_{A_{t_*}}\varphi
      \bigl(q_\lambda-(1-\beta_\lambda)\bigr)\dd\pi\\
  &\ge t_*\sqrt\lambda
      \bigl[b_0(1-F(t_*))-{C_A}F(t_*)\bigr]
   \ge \frac{b_0t_*}{2}\sqrt\lambda,
\end{align*}
where we used \eqref{eq:W2-lower-inputs},
\eqref{eq:W2-q-additive}, $0\le\varphi\le t_*$, and
$\int_{A_{t_*}}\varphi\dd\pi\le t_*F(t_*)$.
Kantorovich--Rubinstein duality and $W_1\le W_2$ therefore give
\[
  W_2(\mu_\lambda,\mu)\ge W_1(\mu_\lambda,\mu)
  \ge\frac{b_0t_*}{2}\sqrt\lambda,
  \qquad 0<\lambda\le\lambda_*.
\]
The constants have the dependencies stated in the theorem, so the claim
holds with $c_2=b_0t_*/2$. \qed
}

\begin{remark}[Constants]\label{rem:W2-constants}
{The proof uses $W$ only through $\|W\|_\infty$,
$\dob(W)\le2\|W\|_\infty$, and part~(i), which gives the dependence stated
in Theorem~\ref{thm:stationary-main}(ii).}
\end{remark}

\section{Proof of Proposition \ref{prop:kernel--free}, Corollary~\ref{cor:probabilistic}, and stochastic consequences}
\label{sec:probabilistic}\label{sec:sde-numerics}

This section proves Proposition~\ref{prop:kernel--free} and
Corollary~\ref{cor:probabilistic}.  We first treat the kernel-free Gibbs
measures, where the stationary states are explicit.  These estimates
extend the fixed-potential comparison to potentials that are merely
bounded below.  We then record the kernel-free reflected-Langevin
specialization and the particle approximation.

\subsection{Proof of Proposition \ref{prop:kernel--free}}
\label{sec:linear}

Throughout this subsection $W\equiv0$ and $\Phi=f$, so that
$b[\mu]=\nabla f$ for every $\mu$ and the penalized dynamics
\eqref{eq:pen-pde} is the linear Fokker--Planck equation
\begin{equation}\label{eq:linear-pen-pde}
  \partial_t\rho^\lambda
  =\divg\bigl(\nabla\rho^\lambda+\rho^\lambda\nabla U_\lambda
  +\rho^\lambda\nabla f\bigr)
  =\divg\bigl(\nabla\rho^\lambda+\rho^\lambda\nabla(f+U_\lambda)\bigr)
\end{equation}
of the penalized overdamped Langevin diffusion
\begin{equation}\label{eq:linear-sde}
  \dd X^\lambda_t
  =-\nabla f(X^\lambda_t)\dd t
  -\frac1\lambda\bigl(X^\lambda_t-\proj_KX^\lambda_t\bigr)\dd t
  +\sqrt2\dd B_t .
\end{equation}
This is the continuous-time model behind quadratic-penalty samplers for
distributions supported on $K$
\cite{BrosseDurmusMoulinesPereyra2018,GurbuzbalabanHuZhu2024}.
The explicit Gibbs structure yields Proposition~\ref{prop:kernel--free},
extending the fixed-potential comparison in Subsection~\ref{sec:static}
to the weaker assumptions below.

For the static theory the standing hypotheses of
Assumption~\ref{ass:coefficients} can be relaxed: no global boundedness of
$\nabla f$ is needed.

\begin{assumption}[Linear potential]\label{ass:linear}
$f\in C^2(\Rd)$ and $\inf_{\Rd}f>-\infty$.
\end{assumption}

\subsubsection{Explicit invariant measures}

Under Assumption~\ref{ass:linear}, the bound
$e^{-f-U_\lambda}\le e^{-\inf f}e^{-U_\lambda}$ gives
$Z_\lambda:=\int_{\Rd}e^{-f-U_\lambda}\dd x<\infty$;
continuity on the compact set $K$ gives
$Z:=\int_K e^{-f}\dd x\in(0,\infty)$. The Gibbs densities are

\begin{equation}\label{eq:linear-gibbs}
\begin{aligned}
\mu_{\lambda,\infty}(\dd x)&=\rli(x)\dd x,
&\rli&:=Z_\lambda^{-1}e^{-f-U_\lambda},\\
\mu_\infty(\dd x)&=\ri(x)\dd x,
&\ri&:=Z^{-1}e^{-f}\one_K.
\end{aligned}
\end{equation}
The drift of \eqref{eq:linear-sde} is the gradient field
$-\nabla(f+U_\lambda)$ (Lemma~\ref{lem:convex-geom}(i)), so the penalized
generator $\mcL_\lambda h=\Delta h-\nabla(f+U_\lambda)\cdot\nabla h$ is
symmetric in $L^2(\mu_{\lambda,\infty})$, and the sub-Markovian semigroup
$P^\lambda_t$ generated by the associated closed symmetric Dirichlet form
is symmetric with respect to $\mu_{\lambda,\infty}$ --- reversible, and
conservative whenever the penalized flow is well posed in the class of
Definition~\ref{def:pen-sol}: the linear penalized flow has an
\emph{explicit} invariant measure, in contrast with the fixed-point
characterization \eqref{eq:fixed-point} of the interacting case.

Because the penalty vanishes identically on $K$, all of the discrepancy
between $\mu_{\lambda,\infty}$ and $\mu_\infty$ is carried by the exterior
Gaussian boundary layer.  The basic layer estimate is the case $\beta=0$ of
the tilted layer bound proved in the appendix.

\begin{lemma}\label{lem:linear-layer}
For every $q\ge0$ there exists $C_{K,q}>0$ such that for all
$\lambda\in(0,1]$,
\begin{equation}\label{eq:linear-coarea}
  \int_{K^c}\dist(x)^q
  \exp\Bigl(-\frac{\dist(x)^2}{2\lambda}\Bigr)\dd x
  \ \le\ C_{K,q}\,\lambda^{(q+1)/2}.
\end{equation}
In particular
$\int_{K^c}e^{-\dist^2/2\lambda}\dd x\le C_K\sqrt\lambda$.
\end{lemma}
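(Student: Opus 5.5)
The plan is to reduce the volume integral over $K^c$ to a one-dimensional integral in the distance variable, and then rescale. Two routes are available: the coarea formula for the $1$-Lipschitz function $\dist$ on $K^c$, where $|\nabla\dist|=1$, or the global normal coordinates $\Psi(y,s)=y+s\,n(y)$ of Lemma~\ref{lem:projected-layer}. I will use the normal coordinates, since convexity makes them global and no collar radius enters. By the tube formula established there,
\[
  \int_{K^c}\dist^q e^{-\dist^2/2\lambda}\dd x
  =\int_\Gamma\int_0^\infty s^q e^{-s^2/2\lambda}
   \prod_{i=1}^{d-1}\bigl(1+s\kappa_i(y)\bigr)\dd s\dd\Hm(y).
\]
Because the principal curvatures satisfy $0\le\kappa_i\le\kappa_*$, the product is bounded by $(1+\kappa_*s)^{d-1}$. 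This gives the bound
\[
  \Hm(\Gamma)\int_0^\infty s^q(1+\kappa_* s)^{d-1}e^{-s^2/2\lambda}\dd s .
\]

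If one prefers to avoid the normal coordinates, the Steiner route works equally well. By coarea, the integral equals $\int_0^\infty s^q e^{-s^2/2\lambda}\,\Hm(\{\dist=s\})\dd s$. The Steiner polynomial for the convex parallel body $K_s$ then gives
\[
  \Hm(\partial K_s)=\tfrac{\dd}{\dd s}|K_s|=\sum_{j=0}^{d-1}c_j(K)s^j\le C(1+s)^{d-1}.
\]
Both routes lead to the same one-dimensional integral.

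Next, substitute $s=\sqrt\lambda\,t$. The integral becomes
\[
  \lambda^{(q+1)/2}\int_0^\infty t^q(1+\kappa_*\sqrt\lambda\, t)^{d-1}e^{-t^2/2}\dd t .
\]
For $\lambda\le1$ the factor $(1+\kappa_*\sqrt\lambda\,t)^{d-1}$ is at most $(1+\kappa_* t)^{d-1}$. The remaining Gaussian integral is finite and depends only on $q$, $d$ and $\kappa_*$. This yields the constant $C_{K,q}:=\Hm(\Gamma)\int_0^\infty t^q(1+\kappa_*t)^{d-1}e^{-t^2/2}\dd t$. The special case $q=0$ is the stated mass bound $\int_{K^c}e^{-\dist^2/2\lambda}\dd x\le C_K\sqrt\lambda$.

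The only step needing care is the justification of the change of variables, that is, the bijectivity and Jacobian of $\Psi$ on all of $K^c$. This was already verified in the proof of Lemma~\ref{lem:projected-layer} using $x=\proj_Kx+\dist(x)\,n(\proj_Kx)$, the $C^1$ regularity of $n$, and nonnegativity of the curvatures. For a self-contained argument that avoids this, I would use the coarea/Steiner route instead. There the only inputs are that $|\nabla\dist|=1$ a.e. on $K^c$ and that $|K_s|$ is a polynomial in $s$ of degree $d$ with nonnegative coefficients.
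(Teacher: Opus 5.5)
Your argument is correct and is essentially the paper's: the paper proves this as the case $\beta=0$ of Lemma~\ref{lem:layer}, via exactly your second route (coarea with $|\nabla\dist|=1$ on $K^c$, the Steiner bound $\Hm(\{\dist=s\})\le C_K(1+s)^{d-1}$, and the rescaling $s=\sqrt\lambda\,t$). Your primary normal-coordinate route is the tube-formula computation from the proof of Lemma~\ref{lem:projected-layer}, run with the extra weight $s^q$, and it leads to the same one-dimensional integral.
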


\begin{proof}
This is Lemma~\ref{lem:layer} with $\beta=0$: the coarea formula for the
distance function combined with the Steiner-type bound
$\Hm(\{\dist=s\})\le C_K(1+s)^{d-1}$ for the outer parallel surfaces of a
compact convex body.
\end{proof}

\begin{corollary}\label{cor:linear-moments}
For every $q\ge0$ there exists $C_q>0$ such that for all
$\lambda\in(0,1]$,
\begin{equation}
  \int_{K^c}\dist(x)^q\,\rli(x)\dd x\ \le\ C_q\,\lambda^{(q+1)/2}.
\end{equation}
\end{corollary}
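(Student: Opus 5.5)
The statement to prove is Corollary~\ref{cor:linear-moments}. The plan is to reduce it to Lemma~\ref{lem:linear-layer} through a pointwise bound on $\rli$ that does not depend on $\lambda$. The key steps are a lower bound on the partition function and an upper bound on $e^{-f}$, followed by integration.

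First, bound the normalizer from below uniformly in $\lambda$. Since $U_\lambda\equiv0$ on $K$,
\[
  Z_\lambda=\int_{\Rd}e^{-f-U_\lambda}\dd x\ \ge\ \int_Ke^{-f}\dd x=Z>0 .
\]
Here $Z$ is finite and positive because $f$ is continuous on the compact set $K$, and $K$ has nonempty interior.

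Second, bound the numerator pointwise. Assumption~\ref{ass:linear} gives $e^{-f}\le e^{-\inf f}$ on all of $\Rd$. Combining the two steps, for every $x\in\Rd$ and every $\lambda\in(0,1]$,
\[
  \rli(x)\ \le\ \frac{e^{-\inf f}}{Z}\,e^{-\dist(x)^2/2\lambda}.
\]
The prefactor $e^{-\inf f}/Z$ is independent of $\lambda$.

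Finally, multiply this bound by $\dist^q$ and integrate over $K^c$. Lemma~\ref{lem:linear-layer} then gives
\[
  \int_{K^c}\dist^q\,\rli\dd x\ \le\ \frac{e^{-\inf f}}{Z}\,C_{K,q}\,\lambda^{(q+1)/2},
\]
so the claim holds with $C_q:=e^{-\inf f}C_{K,q}/Z$. This constant depends only on $K$, $q$ and $f$.

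There is no real obstacle in this argument. The only point to check is that no growth or gradient bound on $f$ is needed: the pointwise domination uses only $\inf f>-\infty$, and the lower bound on $Z_\lambda$ uses only the interior contribution over $K$.
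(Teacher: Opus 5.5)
Your proof is correct and essentially the same as the paper's: both use $Z_\lambda\ge Z$ and $e^{-f}\le e^{-\inf f}$ to get $\rli\le (e^{-\inf f}/Z)\,e^{-\dist^2/2\lambda}$, then integrate against $\dist^q$ over $K^c$ using Lemma~\ref{lem:linear-layer}. Your remark that only $\inf f>-\infty$ and the interior contribution to $Z_\lambda$ are needed matches the paper's use of Assumption~\ref{ass:linear}.
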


\begin{proof}
Since $Z_\lambda\ge Z$ and $f$ is bounded from below,
\[
  \int_{K^c}\dist(x)^q\rli(x)\dd x
  \ \le\ \frac{e^{-\inf_{\Rd}f}}{Z}
  \int_{K^c}\dist(x)^q
  \exp\Bigl(-\frac{\dist(x)^2}{2\lambda}\Bigr)\dd x,
\]
and the claim follows from Lemma~\ref{lem:linear-layer}.
\end{proof}

\begin{lemma}\label{thm:linear-partition}
For every $\lambda>0$,
\[
  Z_\lambda=Z+T_\lambda,
  \qquad
  T_\lambda:=\int_{K^c}e^{-f-U_\lambda}\dd x,
\]
and there exist $c,C>0$ such that for all $\lambda\in(0,1]$,
\[
  c \sqrt{\lambda} \le T_\lambda\le C\sqrt\lambda .
\]
Consequently $Z_\lambda\to Z$ as $\lambda\downarrow0$.
\end{lemma}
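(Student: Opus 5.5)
The identity $Z_\lambda=Z+T_\lambda$ comes from splitting $\Rd=K\cup K^c$: on $K$ we have $U_\lambda\equiv0$, so $\int_Ke^{-f-U_\lambda}\dd x=Z$. Both integrals are finite. $Z\in(0,\infty)$ by continuity of $f$ on the compact set $K$ with nonempty interior. For the exterior part, the bound $e^{-f-U_\lambda}\le e^{-\inf f}e^{-U_\lambda}$ together with Lemma~\ref{lem:linear-layer} gives integrability.

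\emph{Upper bound.} Restricted to $\lambda\in(0,1]$, the same domination and Lemma~\ref{lem:linear-layer} with $q=0$ give
\[
  T_\lambda\le e^{-\inf_{\Rd}f}\int_{K^c}e^{-\dist^2/2\lambda}\dd x\le e^{-\inf f}C_K\sqrt\lambda .
\]
Since $f$ is only assumed bounded below, no upper bound on $f$ is used here.

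\emph{Lower bound.} The potential $f$ may be unbounded above far from $K$, so I would localize to the thin exterior collar $E_\lambda:=\{x\in K^c:\dist(x)<\sqrt\lambda\}$. This set lies inside the fixed compact set $K_1:=\{\dist\le1\}$ for $\lambda\le1$. Continuity of $f$ gives $f\le\sup_{K_1}f=:F_1<\infty$ there, and on $E_\lambda$ we have $U_\lambda\le1/2$. Hence
\[
  T_\lambda\ge e^{-F_1-1/2}|E_\lambda| .
\]
The Steiner formula for the outer parallel sets of the convex body $K$, exactly as in \eqref{eq:exterior-collar-lower}, gives $|E_\lambda|=|K_{\sqrt\lambda}|-|K|\ge\Hm(\Gamma)\sqrt\lambda$; the Steiner polynomial has nonnegative coefficients with linear coefficient $\Hm(\Gamma)$. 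Alternatively, one can integrate the tube formula of Lemma~\ref{lem:projected-layer} over $s\in(0,\sqrt\lambda)$, using nonnegative curvatures. Either route yields $c=e^{-F_1-1/2}\Hm(\Gamma)>0$.

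Finally, $Z_\lambda\to Z$ follows from $0\le Z_\lambda-Z=T_\lambda\le C\sqrt\lambda$ for $\lambda\le1$. I expect no serious obstacle. The only point needing care is the lower bound, because $f$ is merely bounded below: one must use $\sup f$ over a fixed compact neighborhood rather than a global bound.
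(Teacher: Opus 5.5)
Your proposal is correct and follows the paper's proof essentially line by line. The steps are the split over $K$ and $K^c$, the upper bound $T_\lambda\le e^{-\inf f}C_K\sqrt\lambda$ from Lemma~\ref{lem:linear-layer} with $q=0$, and the lower bound on the collar $E_\lambda\subset K_1$ using $U_\lambda\le 1/2$, the local bound $\sup_{K_1}f<\infty$, and $|E_\lambda|\ge\Hm(\Gamma)\sqrt\lambda$ from Steiner, which gives the same constant $c=e^{-\sup_{K_1}f-1/2}\Hm(\Gamma)$. One cosmetic point: Lemma~\ref{lem:linear-layer} is stated only for $\lambda\le1$, so for $\lambda>1$ the finiteness of $T_\lambda$ should be attributed directly to the Gaussian decay of $e^{-\dist^2/2\lambda}$ (the splitting identity holds in $[0,\infty]$ anyway).
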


\begin{proof}
Splitting the integral over $K$ and $K^c$ gives $Z_\lambda=Z+T_\lambda$.
Lemma~\ref{lem:linear-layer} and the lower bound on $f$ give
$T_\lambda\le e^{-\inf f}C_K\sqrt\lambda$.
For the reverse bound, put $K_1=\{\dist(x,K)<1\}$ and
$M_1=\sup_{K_1}f<\infty$, and let
$E_\lambda=\{0<\dist(x,K)<\sqrt\lambda\}$. For $0<\lambda\le1$,
$E_\lambda\subset K_1$ and $U_\lambda\le1/2$ there. Convexity and the
Steiner formula yield
\[
T_\lambda\ge e^{-M_1-1/2}|E_\lambda|
\ge e^{-M_1-1/2}\Hm(\partial K)\sqrt\lambda.
\]
The two-sided bound and convergence of $Z_\lambda$ follow.
\end{proof}

\begin{proposition}
\label{thm:linear-tv}
For every $\lambda>0$,
\begin{equation}
\label{eq:linear-tv-identity}
\|\rli-\ri\|_{L^1(\Rd)}
=
\frac{2T_\lambda}{Z_\lambda}.
\end{equation}
Consequently, there exist constants $c,C>0$ such that
\[
c\sqrt{\lambda}
\le
\|\rli-\ri\|_{L^1(\Rd)}
\le
C\sqrt{\lambda},
\qquad
0<\lambda\le1.
\]
\end{proposition}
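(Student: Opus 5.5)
The plan is to compute the $L^1$ distance directly from the explicit Gibbs formulas \eqref{eq:linear-gibbs}, splitting $\Rd$ into $K$ and $K^c$. Since $U_\lambda\equiv0$ on $K$, on $K$ we have
\[
  \rli=\frac{e^{-f}}{Z_\lambda}=\frac{Z}{Z_\lambda}\,\ri ,
\]
and by Lemma~\ref{thm:linear-partition} we know $Z_\lambda=Z+T_\lambda\ge Z$. Hence $\ri-\rli\ge0$ on $K$, and
\[
  \int_K|\rli-\ri|\dd x=\Bigl(1-\frac{Z}{Z_\lambda}\Bigr)\int_K\ri\dd x=1-\frac{Z}{Z_\lambda}=\frac{T_\lambda}{Z_\lambda}.
\]
On $K^c$ the hard density $\ri$ vanishes, so $\int_{K^c}|\rli-\ri|\dd x=Z_\lambda^{-1}\int_{K^c}e^{-f-U_\lambda}\dd x=T_\lambda/Z_\lambda$. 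Adding the two contributions gives the exact identity \eqref{eq:linear-tv-identity}; equivalently, the interior deficit equals the exterior mass. This is the same bookkeeping already used for $\|\mcT_\lambda(\pi)-\mcT_0(\pi)\|_{L^1}$ in the proof of Theorem~\ref{thm:stationary-main}(i). Finiteness and positivity of $Z,Z_\lambda$ follow from Assumption~\ref{ass:linear} and continuity of $f$ on the compact set $K$.

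For the two-sided rate I would combine the identity with Lemma~\ref{thm:linear-partition}. For $0<\lambda\le1$ that lemma gives $c\sqrt\lambda\le T_\lambda\le C\sqrt\lambda$. It also gives $Z\le Z_\lambda=Z+T_\lambda\le Z+C$, so $Z_\lambda$ is bounded above and below by positive constants independent of $\lambda\in(0,1]$. Therefore
\[
  \frac{2c}{Z+C}\sqrt\lambda\ \le\ \frac{2T_\lambda}{Z_\lambda}\ \le\ \frac{2C}{Z}\sqrt\lambda .
\]

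There is no substantive obstacle here; the only care needed is the sign observation $Z/Z_\lambda\le1$. That sign is what turns the absolute value on $K$ into an exact expression rather than merely an inequality. All the quantitative work, namely the coarea/Steiner bounds for $T_\lambda$, is already contained in Lemma~\ref{thm:linear-partition}.
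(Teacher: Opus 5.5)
Your proposal is correct and follows the paper's proof essentially verbatim: you split $\|\rli-\ri\|_{L^1}$ into the interior deficit $1-Z/Z_\lambda$ and the exterior mass $T_\lambda/Z_\lambda$, then apply Lemma~\ref{thm:linear-partition}. The only cosmetic difference is that you bound $Z_\lambda\le Z+C$ via the upper bound on $T_\lambda$, whereas the paper uses $U_\lambda\ge U_1$ to get $Z_\lambda\le Z_1$; either works.
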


\begin{proof}
On $K$, $\rli=(Z/Z_\lambda)\ri\le\ri$; on $K^c$, $\ri=0$.
Thus the interior deficit and exterior mass coincide:
\[
\|\rli-\ri\|_{L^1(\Rd)}
=\left(1-\frac{Z}{Z_\lambda}\right)+\frac{T_\lambda}{Z_\lambda}
=\frac{2T_\lambda}{Z_\lambda}.
\]
Lemma~\ref{thm:linear-partition} gives
$c_T\sqrt\lambda\le T_\lambda\le C_T\sqrt\lambda$.
Since $U_\lambda\ge U_1$ for $\lambda\le1$,
$Z\le Z_\lambda\le Z_1$, so
\[
\frac{2c_T}{Z_1}\sqrt\lambda
\le\|\rli-\ri\|_{L^1(\Rd)}
\le\frac{2C_T}{Z}\sqrt\lambda.\qedhere
\]
\end{proof}

The proof of Proposition~\ref{thm:linear-tv} uses only that the potential is
continuous and bounded from below, together with the layer bound
\eqref{eq:linear-coarea} and the boundary regularity of $K$.

\begin{proposition}
\label{thm:linear-w2}
There exist constants $c,C>0$ such that for all $\lambda\in(0,1]$,
\[
  c\sqrt\lambda \le W_1(\mu_{\lambda,\infty},\mu_\infty)\le 
  W_2(\mu_{\lambda,\infty},\mu_\infty)\le C\sqrt\lambda.
\]
\end{proposition}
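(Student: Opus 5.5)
The plan is to follow the two-step scheme of the proof of Theorem~\ref{thm:stationary-main}(ii), which is simpler here because the kernel-free Gibbs densities are explicit. On $K$ we have $\rli=(Z/Z_\lambda)\ri$. The middle inequality $W_1\le W_2$ is standard.

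\emph{Upper bound.} First, project the exterior mass onto $\Gamma$. Set $\sigma_\lambda:=(\proj_K)_\#(\rli\one_{K^c}\dd x)$ and $\mu^\sharp_\lambda:=\rli\one_K\dd x+\sigma_\lambda$. The plan $(\mathrm{id},\proj_K)_\#\mu_{\lambda,\infty}$ together with Corollary~\ref{cor:linear-moments} for $q=2$ gives
\[
  W_2^2(\mu_{\lambda,\infty},\mu^\sharp_\lambda)\le C\lambda^{3/2}.
\]
Since $\rli\le (e^{-\inf f}/Z)e^{-\dist^2/2\lambda}$, Lemma~\ref{lem:projected-layer} gives $\sigma_\lambda\le C\sqrt\lambda\,\Hm\!\restriction_\Gamma$.

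Second, compare $\mu^\sharp_\lambda$ with $\mu_\infty$ in $\dot H^{-1}(K)$. On $K$,
\[
  \rli-\ri=-(T_\lambda/Z_\lambda)\ri,
\]
so $\|\rli-\ri\|_{L^\infty(K)}\le C\sqrt\lambda$ by Lemma~\ref{thm:linear-partition}, using that $f$ is continuous on $K$ and hence $\ri\le C$. Combining the Poincar\'e inequality \eqref{eq:W2-poincare} with the trace inequality, exactly as in Step~3 of the proof of Theorem~\ref{thm:stationary-main}(ii), yields
\[
  \|\mu^\sharp_\lambda-\mu_\infty\|_{\dot H^{-1}(K)}\le C\sqrt\lambda.
\]
The lower bounds $\ri\ge Z^{-1}e^{-\max_K f}$ and $\rli|_K\ge (Z/Z_1)\ri$ supply the constant $c_*$ needed in Lemma~\ref{lem:Hminus1-transport}. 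That lemma then gives $W_2(\mu^\sharp_\lambda,\mu_\infty)\le C\sqrt\lambda$, and the triangle inequality concludes. All constants depend on $\inf f$ and on $\min_K f$, $\max_K f$ only, so no global bound on $\nabla f$ is needed.

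\emph{Lower bound.} Use the $1$-Lipschitz test $\varphi(x):=\min\{\dist(x,K),\sqrt\lambda\}$, which vanishes on $K$. Kantorovich--Rubinstein duality gives
\[
  W_1\ge\int\varphi\,\dd\mu_{\lambda,\infty}\ge \sqrt\lambda\,\mu_{\lambda,\infty}(\{\dist\ge\sqrt\lambda\}).
\]
The set $\{\sqrt\lambda\le\dist<2\sqrt\lambda\}$ has volume at least $\Hm(\Gamma)\sqrt\lambda$ by the Steiner formula. On this set $U_\lambda\le 2$ and $f\le M_1:=\sup_{\{\dist<2\}}f$, while $Z_\lambda\le Z_1$. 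Hence
\[
  \mu_{\lambda,\infty}(\{\dist\ge\sqrt\lambda\})\ge c\sqrt\lambda,
\]
and this gives only $W_1\ge c\lambda$, which is too weak. I would therefore instead use the interior-deficit argument of Step~3 in the lower-bound proof of Theorem~\ref{thm:stationary-main}(ii), where the ratio $q_\lambda=Z/Z_\lambda$ is now exactly constant. Take $\varphi=(t_*-r)_+$ with $r=\dist(\cdot,K^c)$ and $t_*$ small but fixed. Since $\varphi=t_*$ on $K^c$,
\[
  \int\varphi\,\dd(\mu_{\lambda,\infty}-\mu_\infty)
  =\frac{T_\lambda}{Z_\lambda}\Bigl(t_*-\int_{A_{t_*}}\varphi\,\dd\mu_\infty\Bigr)
  \ge\frac{T_\lambda}{Z_\lambda}t_*\bigl(1-F(t_*)\bigr)\ge c\sqrt\lambda,
\]
using $F(t_*)\le\tfrac12$ for $t_*$ small (by the coarea formula and $\ri\le C$) together with the lower bound of Lemma~\ref{thm:linear-partition}.

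The main obstacle is the upper-bound transport step, which is handled by Lemma~\ref{lem:Hminus1-transport}. The only remaining care is that $f$ is not assumed bounded globally, so every bound must be localized either to $K$ or to a unit neighborhood of $K$, or else controlled through $\inf f$ alone.
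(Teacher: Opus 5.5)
Your proposal is correct and follows the paper's proof. The upper bound is the same projection-plus-$\dot H^{-1}$ redistribution argument (Lemmas~\ref{lem:projected-layer} and \ref{lem:Hminus1-transport}), using $\|\rli-\ri\|_{L^\infty(K)}\le C\sqrt\lambda$, the Gaussian exterior bound and the lower bounds on $\rli,\ri$ over $K$. The lower bound is the same interior-deficit test, except that the paper skips the truncation $t_*$ and the bound on $F(t_*)$: since $\rli/\ri=Z/Z_\lambda$ is exactly constant on $K$, the untruncated test $h=\dist(\cdot,K^c)$ gives $W_1\ge\beta_\lambda\int_K h\,\dd\mu_\infty$ with $\beta_\lambda=T_\lambda/Z_\lambda$, which is your computation with $t_*$ at least the inradius (and your abandoned first attempt can simply be dropped).
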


\begin{proof}
For the upper bound, $Z_\lambda=Z+T_\lambda$ and
$T_\lambda\le C\sqrt\lambda$ give $Z\le Z_\lambda\le Z+C$ and
\[
\|\rli-\ri\|_{L^\infty(K)}
=\|e^{-f}\|_{L^\infty(K)}\frac{T_\lambda}{ZZ_\lambda}
\le C\sqrt\lambda.
\]
Also,
\begin{equation}\label{eq:linear-exterior-Gaussian}
\rli(x)\le\frac{e^{-\inf_{\Rd}f}}{Z}
e^{-\dist(x,K)^2/(2\lambda)},\qquad x\in\Rd.
\end{equation}
Continuity of $f$ on $K$ and the uniform normalizer bounds give
$\rli,\ri\ge c_0>0$ on $K$. These are the hypotheses used in
Steps~2--4 of the upper-bound proof of
Theorem~\ref{thm:stationary-main}(ii), with $\kappa=0$; the same projection
and redistribution argument yields $W_2\le C\sqrt\lambda$.

For the lower bound, set
$\beta_\lambda=T_\lambda/Z_\lambda=\mu_{\lambda,\infty}(K^c)$.
Since $\rli=(1-\beta_\lambda)\ri$ on $K$, the $1$-Lipschitz function
$h(x)=\dist(x,K^c)$ and Kantorovich--Rubinstein duality give
\[
W_1(\mu_{\lambda,\infty},\mu_\infty)
\ge\beta_\lambda\int_K h\,\dd\mu_\infty\ge c\sqrt\lambda.
\]
Here $\int_K h\,\dd\mu_\infty>0$ because $K$ has nonempty interior,
and $\beta_\lambda\ge c\sqrt\lambda$ by
Lemma~\ref{thm:linear-partition} and $Z_\lambda\le Z_1$.
Finally, $W_1\le W_2$ completes the proof.
\end{proof}

\begin{proof}[Proof of Proposition~\ref{prop:kernel--free}]
{The partition estimate is
Lemma~\ref{thm:linear-partition}, the exact $L^1$ identity and rate are
Proposition~\ref{thm:linear-tv}, and the two-sided Wasserstein bounds are
Proposition~\ref{thm:linear-w2}.}
\end{proof}

\subsection{Further reversible consequence: all-time Gibbs domination}
\label{subsec:alltime-domination}

{
This additional linear result is not used in
Theorem~\ref{thm:stationary-main}.  Reversibility replaces the finite-horizon
barrier constant by an all-time Markov-semigroup domination.
}

\begin{proposition}\label{prop:linear-domination}
Let Assumptions~\ref{ass:geometry} and \ref{ass:linear} hold, assume the
penalized flow \eqref{eq:linear-pen-pde} is well posed in the class of
Definition~\ref{def:pen-sol} (for instance under the hypotheses of
Proposition~\ref{prop:pen-wp}), and let the initial density satisfy
\[
  0\le\rho_0\le M'\ri\ \text{ a.e.\ on }K,
  \qquad
  \rho_0=0\ \text{ a.e.\ on }K^c,
  \qquad
  \int_K\rho_0\dd x=1 .
\]
Then, for every $\lambda\in(0,1]$ and every $t\ge0$,
\[
  0\ \le\ \rho^\lambda_t\ \le\ M'\,\frac{Z_\lambda}{Z}\,\rli
  \ \le\ M'\,\frac{Z_1}{Z}\,\rli
  \qquad\dd x\text{-a.e.}
\]
In particular, by Corollary~\ref{cor:linear-moments}, for every $q\ge0$,
\[
  \sup_{t\ge0}\int_{K^c}\dist(x)^q\,\rho^\lambda_t(x)\dd x
  \ \le\ C_q\,\lambda^{(q+1)/2},
\]
with $C_q$ independent of $t$ and $\lambda\in(0,1]$.
\end{proposition}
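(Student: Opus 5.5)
We work with the ratio $h_t:=\rho^\lambda_t/\rli$. The idea is that $\rli$ is an exact stationary solution of the linear equation \eqref{eq:linear-pen-pde}, so the constant multiple $c\,\rli$ is a solution for every $c$. Comparison with this explicit solution then needs no barrier constant growing in time.

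\textbf{Step 1: normalize the initial datum.} On $K$ we have $\rli=(Z/Z_\lambda)\ri$, so $\rho_0\le M'\ri=M'(Z_\lambda/Z)\rli$. Together with $\rho_0=0$ on $K^c$, this gives
\[
  0\le h_0\le c_\lambda:=M'Z_\lambda/Z\quad\text{a.e.\ on }\Rd .
\]

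\textbf{Step 2: propagate the bound $h_t\le c_\lambda$ for all $t\ge0$.} In terms of $h$, the linear equation reads $\rli\partial_th=\divg(\rli\nabla h)$; this is \eqref{eq:ratio-eq} with weight $\rli$ in place of $\wl$ and no bounded drift left over. Test with $(h-c_\lambda)_+$; the Lions--Magenes/Steklov justification carries over from the energy framework of Definition~\ref{def:pen-sol}. This gives
\[
  \frac12\frac{\dd}{\dd t}\int (h-c_\lambda)_+^2\rli\dd x
  =-\int|\nabla(h-c_\lambda)_+|^2\rli\dd x\le0,
\]
and the left side vanishes at $t=0$.

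Equivalently, $\rho^\lambda_t\,\dd x=(P^\lambda_t)^*\rho_0$, and the semigroup $P^\lambda_t$ is symmetric with respect to $\mu_{\lambda,\infty}$, positivity preserving and Markov. Hence $h_t=P^\lambda_th_0\le c_\lambda P^\lambda_t1\le c_\lambda$. This second route is the one I would write, since it gives the bound for every $t$ at once.

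\textbf{Main obstacle: the weights.} Under Assumption~\ref{ass:linear} the gradient $\nabla f$ may be unbounded, so $\rli$ need not be comparable to $\wl$, and the class of Definition~\ref{def:pen-sol} uses $\wl$. I would pass between the two weights locally, which is harmless because $f$ is continuous. To make the global integrations by parts legitimate, I would use cutoffs $\chi_R$ as in Lemma~\ref{lem:equicont}. Alternatively, I would rely on the identification of the weak solution with the Dirichlet-form semigroup, which is exactly what the well-posedness hypothesis supplies.

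\textbf{Step 3: remove the $\lambda$-dependence of the constant.} Since $U_\lambda\ge U_1$ for $\lambda\le1$, we get $Z_\lambda\le Z_1$, and therefore $c_\lambda\le M'Z_1/Z$. Nonnegativity is part of well-posedness.

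\textbf{Step 4: moment bounds.} Multiply the domination $\rho^\lambda_t\le M'(Z_1/Z)\rli$ by $\dist^q$ and integrate over $K^c$. Corollary~\ref{cor:linear-moments} then gives
\[
  \int_{K^c}\dist^q\rho^\lambda_t\dd x\le M'(Z_1/Z)\,C_q\,\lambda^{(q+1)/2},
\]
and the constant is uniform in $t\ge0$ and in $\lambda\in(0,1]$.
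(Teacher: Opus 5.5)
Your proposal is correct and follows the paper's proof: pass to the ratio $h^\lambda_t=\rho^\lambda_t/\rli$, identify it with $P^\lambda_t h^\lambda_0$ via reversibility and uniqueness of the penalized evolution, bound it by $M'Z_\lambda/Z$ using positivity and the $L^\infty$ contraction, then use $Z_\lambda\le Z_1$ and Corollary~\ref{cor:linear-moments}. Your caveat that the weights $\rli$ and $\wl$ are not comparable when $\nabla f$ is unbounded is a real point; the paper's proof also leaves it to the assumed well-posedness and does not treat it separately.
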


{
\begin{proof}
Let $h_t^\lambda:=\rho_t^\lambda/\rli$.  The identity
\(\nabla\rli+\rli\nabla(f+U_\lambda)=0\) transforms the equation into
\[
 \rli\,\partial_t h^\lambda=\divg(\rli\nabla h^\lambda).
\]
Detailed balance identifies this relative-density evolution with the
Markov semigroup \(P_t^\lambda\) in \(L^2(\mu_{\lambda,\infty})\);
uniqueness of the penalized evolution gives
\(h_t^\lambda=P_t^\lambda h_0^\lambda\).
Since $\rho_0=0$ on $K^c$ and, on $K$,
\[
  h_0^\lambda=\frac{\rho_0}{\ri}\frac{Z_\lambda}{Z}
  \le M'\frac{Z_\lambda}{Z},
\]
positivity and the $L^\infty$ contraction of $P_t^\lambda$ imply
$0\le h_t^\lambda\le M'Z_\lambda/Z$.  Multiplication by $\rli$ gives the
first domination bound.  Because $U_\lambda\ge U_1$ for $\lambda\le1$,
$Z_\lambda\le Z_1$, giving the second.  The exterior moment estimates then
follow directly from Corollary~\ref{cor:linear-moments}.
\end{proof}
}

{
\begin{remark}
Assumption~\ref{ass:data} implies the hypothesis above with
$M'=MZe^{\sup_K f}$, because $\ri\ge Z^{-1}e^{-\sup_Kf}$ on $K$.
Thus the linear exterior-moment constants are uniform for all $t\ge0$,
in contrast with the finite-horizon interacting barrier.
\end{remark}
}

\subsection{Proof of Corollary~\ref{cor:probabilistic}}

Existence and uniqueness for the reflected McKean--Vlasov SDE
\eqref{eq:rmv-sde} is due to Sznitman \cite{Sznitman1984} (see also
\cite{Tanaka1979,LionsSznitman1984}).  The required hypotheses --- bounded
state-Lipschitz drift and $W_1$-Lipschitz dependence on the law on a convex
domain --- follow from \eqref{eq:drift-bounds} and \eqref{eq:drift-W1}.

We use the following pathwise estimates to identify the reflected marginals
and separate the particle error from the soft-wall bias, uniformly in
$\lambda$.

{
\begin{proposition}[Pathwise penalization and $\lambda$-uniform propagation of chaos]
\label{prop:pathwise}
Assume the hypotheses of Theorem~\ref{thm:main}.  Couple the reflected
solutions $X^i$ with local times $L^i$, the penalized McKean--Vlasov solutions
$X^{i,\lambda}$, and the particles $X^{i,N,\lambda}$ by the same i.i.d.\ initial
data and Brownian motions.  Set
\[
  \varepsilon_\lambda:=\bigl(\lambda\log(1/\lambda)\bigr)^{1/4},
  \qquad 0<\lambda\le\tfrac12.
\]
There is $C$, depending only on $K,d,T$ and the coefficient bounds, but not on
$N$ or $\lambda$, such that for every $i\le N$,
\begin{align*}
  \mathbb E\sup_{t\le T}|X^{i,\lambda}_t-X^i_t|
  &\le C\varepsilon_\lambda,\\
  \mathbb E\sup_{t\le T}|X^{i,N,\lambda}_t-X^{i,\lambda}_t|
  &\le CN^{-1/2},\\
  \mathbb E\sup_{t\le T}\Bigl|
    \int_0^t\nabla U_\lambda(X^{i,\lambda}_s)\dd s
    -\int_0^t n(X^i_s)\dd L^i_s\Bigr|
  &\le C\varepsilon_\lambda.
\end{align*}
Consequently,
\[
  \sup_{t\le T}W_1(\mathrm{Law}(X^{1,\lambda}_t),\mathrm{Law}(X^1_t))
  \le C\varepsilon_\lambda,
  \qquad
  \mathbb E\sup_{t\le T}|X^{i,N,\lambda}_t-X^i_t|
  \le C(N^{-1/2}+\varepsilon_\lambda).
\]
\end{proposition}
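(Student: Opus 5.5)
The plan is to reduce the three estimates to a monotone synchronous coupling combined with the prescribed-drift penalization estimate \cite[Thm.~4.1(ii)]{Slominski2013}. The structural facts are two. First, the penalty drift $-\nabla U_\lambda=-(x-\proj_Kx)/\lambda$ is the gradient of a convex function, so it is monotone: $\langle\nabla U_\lambda(x)-\nabla U_\lambda(y),x-y\rangle\ge0$. Second, the reflection term is monotone against points of $K$ by convexity: $\langle n(X_s),X_s-y\rangle\ge0$ for $y\in K$ on the support of $\dd L$.

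\textbf{Step 1 (penalized vs.\ reflected McKean--Vlasov).} Freeze the reflected flow of laws $\mu_t=\mathrm{Law}(X_t)$ and introduce the auxiliary penalized SDE $Y^{\lambda}$ with the prescribed drift $-b[\mu_t](\cdot)$. Its coefficients are bounded and Lipschitz in space, uniformly in $t$, by \eqref{eq:drift-bounds}. The cited estimate of S{\l}omi\'nski then gives
\[
\mathbb E\sup_{t\le T}|Y^{\lambda}_t-X_t|+\mathbb E\sup_{t\le T}\Bigl|\int_0^t\nabla U_\lambda(Y^\lambda_s)\dd s-\int_0^tn(X_s)\dd L_s\Bigr|\le C\varepsilon_\lambda .
\]
Next compare $X^{\lambda}$ with $Y^{\lambda}$. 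Both are driven by the same Brownian motion and penalty, so the noise cancels in the difference. Itô's formula on $|X^\lambda-Y^\lambda|^2$, where the cross term of the penalties is $\le0$ by monotonicity, yields
\[
\frac{\dd}{\dd t}|X^\lambda_t-Y^\lambda_t|\le B_1|X^\lambda_t-Y^\lambda_t|+\|b[\mathrm{Law}(X^\lambda_t)]-b[\mu_t]\|_\infty .
\]
By \eqref{eq:drift-W1}, the last term is at most $C\,W_1(\mathrm{Law}(X^\lambda_t),\mu_t)\le C\,\mathbb E|X^\lambda_t-X_t|\le C(\mathbb E|X^\lambda_t-Y^\lambda_t|+C\varepsilon_\lambda)$. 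Gr\"onwall, taken pathwise in sup norm and then in expectation, gives $\mathbb E\sup|X^\lambda-Y^\lambda|\le C\varepsilon_\lambda$. The first estimate follows by the triangle inequality.

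For the accumulated penalty, write
\[
\int_0^t\nabla U_\lambda(X^\lambda_s)\dd s=X_0-X^\lambda_t-\int_0^tb[\cdot](X^\lambda_s)\dd s+\sqrt2B_t ,
\]
and do the same for $Y^\lambda$. The difference of the two penalty integrals is then controlled by $\sup|X^\lambda-Y^\lambda|$ plus the drift difference, both already $O(\varepsilon_\lambda)$. This gives the third estimate.

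\textbf{Step 2 (uniform propagation of chaos).} Take the particles $X^{i,N,\lambda}$ against i.i.d.\ copies $X^{i,\lambda}$ driven by the same Brownian motions. Monotonicity of $\nabla U_\lambda$ removes the penalty from the difference equation, which is the point of the argument: the constant is then independent of $\lambda$. What remains is the standard Sznitman estimate. The drift difference splits into a Lipschitz part, bounded by $B_1\cdot\frac1N\sum_j|X^{j,N,\lambda}-X^{j,\lambda}|$ plus the own difference, and the fluctuation
\[
\Bigl|\frac1N\sum_j\nabla_xW(X^{i,\lambda},X^{j,\lambda})-\int\nabla_xW(X^{i,\lambda},y)\,\mu^\lambda_t(\dd y)\Bigr| .
\]
By independence and boundedness of $\nabla_xW$, the fluctuation has $L^2$ norm $\le CN^{-1/2}$ uniformly in $t$. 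Averaging over $i$, using exchangeability, and applying Gr\"onwall gives the second bound, with constants depending only on $B_0,B_1,T$.

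\textbf{Conclusions and main obstacle.} The $W_1$ statement follows from the coupling bound in Step 1, since $W_1(\mathrm{Law}(X^{1,\lambda}_t),\mathrm{Law}(X^1_t))\le\mathbb E|X^{1,\lambda}_t-X^1_t|$. The final joint bound is the triangle inequality combining Steps 1 and 2. The main obstacle is Step 1, in checking that the hypotheses of \cite[Thm.~4.1(ii)]{Slominski2013} hold with a time-dependent, merely measurable-in-$t$ drift $b[\mu_t]$ and a $C^2$ convex $K$. It also requires that the resulting constant depend only on the coefficient bounds. If that citation only covers time-homogeneous drifts, I would treat $t$ as an extra state variable, or rerun the Itô/Tanaka argument: expand $|Y^\lambda-X|^2$, use $\langle\nabla U_\lambda(Y),Y-X\rangle\ge\dist(Y)^2/\lambda$ together with the $O(\sqrt{\lambda\log(1/\lambda)})$ bound on $\mathbb E\sup\dist(Y^\lambda)$, and the cross term $\langle n(X),X-Y\rangle\dd L\le\dist(Y)\dd L$.
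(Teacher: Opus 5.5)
Your proposal is correct and follows the paper's proof essentially step for step. The common ingredients are the frozen-drift penalized process $Y^\lambda$ with the S{\l}omi\'nski estimate, monotone synchronous coupling and Gr\"onwall for $X^{1,\lambda}-Y^\lambda$ and for the particle system, and the penalty bound read off from the integral form of the SDEs. For the penalty bound you pass through $Y^\lambda$, whereas the paper subtracts the penalized and reflected equations directly; the two are equivalent.

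One small correction concerns the propagation-of-chaos step. There the empirical interaction term is Lipschitz in the $y$-variable with constant $L_2=\|\nabla_y\nabla_xW\|_\infty$, not $B_1$, so the constant depends on $\|W\|_{C^2}$ and not only on $B_0$, $B_1$, $T$.
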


\begin{proof}
Write $\mu_t:=\mathrm{Law}(X^1_t)$,
$\mu_t^\lambda:=\mathrm{Law}(X^{1,\lambda}_t)$ and
$L_2:=\|\nabla_y\nabla_xW\|_\infty$.  Let $Y^\lambda$ solve the penalized
SDE with frozen drift $b[\mu_t]$, driven by the same $(B^1,\xi^1)$ as $X^1$.
The prescribed-drift estimate of \cite[Thm.~4.1(ii)]{Slominski2013} gives
\[
  \mathbb E\sup_{t\le T}|Y_t^\lambda-X^1_t|\le C\varepsilon_\lambda.
\]
The hypotheses hold because $b[\mu_t]$ is uniformly bounded and Lipschitz in
space, and $t\mapsto\mu_t$ is $W_1$-continuous since $X^1$ has continuous
paths in the compact set $K$.  The cited proof applies to every real
penalty coefficient $n=\lambda^{-1}\ge2$.  Its constants for deterministic
initial points can be chosen uniformly on the compact set $K$; conditioning
on $\xi^1$ therefore gives the estimate for the present random initial data.

Set $Z:=X^{1,\lambda}-Y^\lambda$.  Synchronous coupling cancels the noise,
and monotonicity of $\nabla U_\lambda$ gives
\[
  |Z_t|\le\int_0^t\bigl(B_1|Z_s|+L_2W_1(\mu_s^\lambda,\mu_s)\bigr)\dd s,
  \qquad
  W_1(\mu_s^\lambda,\mu_s)
  \le\mathbb E|Z_s|+\mathbb E|Y_s^\lambda-X^1_s|.
\]
Thus $a(t):=\mathbb E\sup_{r\le t}|Z_r|$ satisfies
\[
  a(t)\le(B_1+L_2)\int_0^t a(s)\dd s+C\varepsilon_\lambda t.
\]
Gr\"onwall bounds $a(T)$ by $C\varepsilon_\lambda$; adding the bound for
$Y^\lambda-X^1$ yields the first estimate.

For $\Delta^i:=X^{i,N,\lambda}-X^{i,\lambda}$, the same monotonicity gives
\[
  |\Delta_t^i|\le\int_0^t\left(
  B_1|\Delta_s^i|+\frac{L_2}{N}\sum_{j=1}^N|\Delta_s^j|+|R_s^i|\right)\dd s,
\]
where
\[
  R_s^i:=\frac1N\sum_{j=1}^N\nabla_xW(X_s^{i,\lambda},X_s^{j,\lambda})
  -\int\nabla_xW(X_s^{i,\lambda},y)\,\mu_s^\lambda(\dd y).
\]
Conditionally on $X_s^{i,\lambda}$, the terms with $j\ne i$ are centered and
independent; the $j=i$ term is $O(N^{-1})$.  Hence
$\mathbb E|R_s^i|\le C\|\nabla_xW\|_\infty N^{-1/2}$ by conditional
Cauchy--Schwarz.  By exchangeability,
$a_N(t):=\mathbb E\sup_{r\le t}|\Delta_r^1|$ satisfies
\[
  a_N(t)\le(B_1+L_2)\int_0^t a_N(s)\dd s+CtN^{-1/2}.
\]
Gr\"onwall gives the second estimate, with no $\lambda^{-1}$ dependence.

Finally, subtracting the integral forms of the penalized and reflected SDEs
gives
\[
  \int_0^t\nabla U_\lambda(X^{i,\lambda}_s)\dd s
  -\int_0^t n(X^i_s)\dd L^i_s
  =X^i_t-X^{i,\lambda}_t
  -\int_0^t\bigl(b[\mu_s^\lambda](X_s^{i,\lambda})-b[\mu_s](X_s^i)\bigr)\dd s.
\]
The first estimate and \eqref{eq:drift-bounds}--\eqref{eq:drift-W1} give the
third.  The two stated consequences follow by coupling and the triangle
inequality.
\end{proof}
}

\begin{proof}[Proof of Corollary~\ref{cor:probabilistic}]
\textbf{Step 1: identification of the reflected marginals.}
{Let $\mu_t:=\mathrm{Law}(X_t)$.  By
Proposition~\ref{prop:pathwise}, $\mathbb E|X^{1,\lambda}_t-X^1_t|\le
C\varepsilon_\lambda\to0$ for every $t\in[0,T]$, so
$\mathrm{Law}(X^{\lambda}_t)\to\mu_t$ narrowly.  Since
$\mathrm{Law}(X^{\lambda}_t)=\rho^\lambda_t\dd x$
(Proposition~\ref{prop:pen-wp}) and $\rho^\lambda_t\to\rho_t\one_K$
narrowly for every $t$ (Theorem~\ref{thm:main}(i)), we obtain
$\mu_t=\rho_t\dd x$ for every $t\in[0,T]$.}  This proves part~(i).

\textbf{Step 2: convergence of the time marginals.}
By Proposition~\ref{prop:pen-wp},
$\mathrm{Law}(X_t^\lambda)=\rho_t^\lambda\dd x$.  Combining this identity
with Step~1 and Theorem~\ref{thm:main}(i) gives both the uniform
bounded-Lipschitz convergence and the stated density convergences.  This
proves part~(ii).

\textbf{Step 3: averaged penalty-force occupation measures.}
For every
$V\in C_c([0,T]\times\Rd;\Rd)$, Fubini's theorem and the marginal identity
give
\[
  \int_{[0,T]\times\Rd}V\cdot\dd\mathsf Q^\lambda
  =\mathbb E\int_0^T
  V(t,X_t^\lambda)\cdot
  \frac{X_t^\lambda-\proj_KX_t^\lambda}{\lambda}\,\dd t
  =\int_{[0,T]\times\Rd}V\cdot\dd Q^\lambda.
\]
Part~(iii) now follows directly from Theorem~\ref{thm:main}(iii).
\end{proof}

\subsection{Kernel-free reflected Langevin specialization}

We now specialize the dynamical and stationary results to $W\equiv0$.

\begin{corollary}\label{cor:linear-limit}
Let Assumptions~\ref{ass:geometry} and \ref{ass:data} hold, and let
$W\equiv0$, $\Phi=f$ with $f\in C^2(\Rd)$, $\nabla f\in C^1_b(\Rd;\Rd)$,
and $\inf_{\Rd}f>-\infty$.  Then:
\begin{enumerate}[label=(\roman*),leftmargin=2.2em]
\item Assumption~\ref{ass:coefficients} holds with
$B_0=\|\nabla f\|_\infty$, $B_1=\|\nabla^2f\|_\infty$, and $L_W=0$.
All conclusions of Theorem~\ref{thm:main}, including the density, full-flux,
and boundary-reaction limits, apply to the reflected Fokker--Planck equation
\[
\partial_t\rho=\divg(\nabla\rho+\rho\nabla f),\qquad
(\nabla\rho+\rho\nabla f)\cdot n=0\quad\text{on }\Gamma.
\]
\item $\rho$ is the time-marginal density of the normally reflected
Langevin diffusion on $K$,
\[
  \dd X_t=-\nabla f(X_t)\dd t+\sqrt2\dd B_t-n(X_t)\dd L_t,
\]
where $L$ is the boundary local time defined after \eqref{eq:rmv-sde}.
\item The Gibbs measures $\mu_{\lambda,\infty}$ and $\mu_\infty$ of
\eqref{eq:linear-gibbs} are stationary for the penalized and for the
reflected dynamics respectively.  They are the unique such states:
$\mu_\infty$ among stationary variational solutions of the reflected
equation, and $\mu_{\lambda,\infty}$ among stationary states with
finite-energy Gibbs ratio
$\bar\rho^\lambda/\rli\in H^1(\mu_{\lambda,\infty})$.
Moreover, $\mu_{\lambda,\infty}$ is the unique
invariant probability measure of the diffusion \eqref{eq:linear-sde}.
The Gibbs measures converge at the rates of
Propositions~\ref{thm:linear-tv} and \ref{thm:linear-w2}.
\end{enumerate}
\end{corollary}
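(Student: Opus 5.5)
Part (i) is a direct specialization. With $W\equiv0$ the drift is $b[\mu]=\nabla f$ for every $\mu$, so Assumption~\ref{ass:coefficients} holds with $B_0=\|\nabla f\|_\infty$, $B_1=\|\nabla^2f\|_\infty$, $L_W=0$. Symmetry of $W$ is trivial. Theorem~\ref{thm:main} then applies verbatim. The reflected problem \eqref{eq:ref-pde} becomes the linear reflected Fokker--Planck equation, and all of (i)--(iv) of that theorem transfer.

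For (ii) I invoke Corollary~\ref{cor:probabilistic}(i). Since the drift does not depend on the law, \eqref{eq:rmv-sde} reduces to the classical reflected Langevin SDE. Its well-posedness follows from Tanaka and Lions--Sznitman, and its marginals are $\rho_t\,\dd x$.

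For (iii), stationarity of $\rli$ follows from $\nabla\rli+\rli\nabla(f+U_\lambda)=0$, so $J^\lambda\equiv0$. For $\ri$, the identity $\nabla\ri+\ri\nabla f=0$ in $K$ gives \eqref{eq:var-form} with $\partial_t\ri=0$. Uniqueness for the reflected equation is the last step of the proof of Theorem~\ref{thm:stationary-main}(i): write $\bar\rho=gv$ with $g=\ri$, obtain $\int_K g|\nabla v|^2=0$, so $v$ is constant. Here $f$ is frozen and no fixed point is needed.

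For the penalized uniqueness I do the same in the weighted space. Let $\bar\rho^\lambda$ be stationary with $v:=\bar\rho^\lambda/\rli\in H^1(\mu_{\lambda,\infty})$. The weak form reads $\int\rli\nabla v\cdot\nabla\varphi=0$. Testing with $\varphi=v$ needs a cutoff approximation $v\chi_R$. The error term $\int\rli v\nabla v\cdot\nabla\chi_R$ is bounded by $R^{-1}\|v\|_{L^2(\mu)}\|\nabla v\|_{L^2(\mu)}\to0$. Hence $\nabla v=0$, $v$ is constant, and $\bar\rho^\lambda=\rli$.

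The claim that $\mu_{\lambda,\infty}$ is the unique invariant measure of \eqref{eq:linear-sde} is the step I expect to be the main obstacle, because an arbitrary invariant measure need not have a finite-energy density ratio a priori. I would argue as follows. The drift is locally Lipschitz and the noise is nondegenerate, so the process is strong Feller and irreducible. Hörmander/Doob arguments then show that any two invariant probabilities are equivalent, and distinct ergodic ones would be singular, so there is at most one. Alternatively, an invariant measure has a smooth positive density solving the stationary equation by elliptic regularity (Bogachev--Krylov--Röckner \cite{BKRS}), and one can use their uniqueness theorem for stationary Fokker--Planck equations with the Lyapunov function $V=\dist^2$. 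Existence is given by $\rli$ itself.

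The convergence rates of the Gibbs measures are exactly Propositions~\ref{thm:linear-tv} and \ref{thm:linear-w2}, since the present hypotheses include Assumption~\ref{ass:linear}.
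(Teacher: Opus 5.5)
Your proposal is correct and follows the paper's proof essentially step by step. Parts (i)--(ii) are the specializations of Theorem~\ref{thm:main} and Corollary~\ref{cor:probabilistic}, the two stationary uniqueness claims use the Gibbs-ratio factorization $\bar\rho=gv$ with a cutoff test, and uniqueness of the invariant measure comes from strong Feller plus irreducibility. The one small correction is that the drift $-\nabla f-\nabla U_\lambda$ is in fact globally Lipschitz, with constant $\|\nabla^2 f\|_\infty+\lambda^{-1}$, not merely locally Lipschitz; this gives nonexplosion, which you should state before invoking the strong Feller/irreducibility criterion, as the paper does.
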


{
\begin{proof}
For $W\equiv0$ one has $b[\mu]=\nabla f$, so (i) is Theorem~\ref{thm:main},
and (ii) is Corollary~\ref{cor:probabilistic}(i) with the reflected
McKean--Vlasov SDE reduced to the classical reflected Langevin equation.

For (iii), the identities
\[
  \nabla\rli+\rli\nabla(f+U_\lambda)=0\quad\text{on }\Rd,
  \qquad
  \nabla\ri+\ri\nabla f=0\quad\text{on }K
\]
show stationarity.  To prove uniqueness on $K$, let $g=Z^{-1}e^{-f}$ and
write a stationary variational density as $\bar\rho=gv$.  Since $g$ is
bounded above and below on $K$, $v\in H^1(K)$, and the stationary equation
becomes $\int_K g\nabla v\cdot\nabla\varphi\dd x=0$ for all $\varphi\in H^1(K)$.
Taking $\varphi=v$ makes $v$ constant; normalization gives
$\bar\rho=\ri$.

Likewise, if $v=\bar\rho^\lambda/\rli$ has finite Gibbs energy, stationarity
is $\int\rli\nabla v\cdot\nabla\varphi=0$.  Testing with
$\chi_R^2v$, where $\chi_R=1$ on $B_R$ and $|\nabla\chi_R|\le2/R$, gives
\[
  \int_{\Rd}\rli\chi_R^2|\nabla v|^2\dd x
  \le \frac{C}{R^2}\int_{\Rd}\rli v^2\dd x.
\]
Letting $R\to\infty$ shows that $v$ is constant and hence
$\bar\rho^\lambda=\rli$.

To obtain uniqueness among \emph{all} invariant probabilities, fix
$\lambda>0$.  The drift $-\nabla f-\nabla U_\lambda$ is globally Lipschitz,
so the diffusion is nonexplosive.  Its nondegenerate additive noise makes
the transition semigroup strong Feller and topologically irreducible
(by the local uniformly elliptic transition-kernel theory).
The standard uniqueness criterion for invariant probabilities therefore
applies; see \cite[Corollary~2.7]{Hairer2021}.
The convergence rates are exactly
Propositions~\ref{thm:linear-tv} and \ref{thm:linear-w2}.
\end{proof}
}

{
\begin{remark}[Growing potentials]\label{rem:linear-truncation}
The static Gibbs estimates and Proposition~\ref{prop:linear-domination}
require only $f\in C^2(\Rd)$ and $\inf f> -\infty$, so they include
quadratic confinement.  The finite-time limit above additionally assumes
$\nabla f\in C_b^1$, as required by Assumption~\ref{ass:coefficients}.
Truncating an unbounded drift outside a neighborhood of $K$ does not by
itself justify the same dynamical conclusion: the stationary exterior-mass
bound does not control the probability that a path exits that neighborhood.
Such an extension requires a separate stopped-process estimate.
\end{remark}
}

\section{Numerical illustration}\label{sec:numerics}

Deterministic computations examine the boundary layer, the reaction-measure
limit of Theorem~\ref{thm:main}, and the stationary Gibbs rates, without
particle-sampling fluctuations.

\subsection{Setup}\label{subsec:num-setup}

For the dynamical experiment, we use the MMD data of
Example~\ref{ex:mmd} on $K=[-1,1]^2$. The Gaussian kernel has width
$\sigma=0.25$ and amplitude $\alpha=6$. The target is the normalized
restriction to $K$ of an equal-weight mixture of four Gaussians of width
$0.12$ centered at $(\pm3/4,\pm3/4)$; the initial density is a centered
Gaussian of width $0.2$ conditioned on $K$, and $T=1$.

\begin{remark}[Scope of the box computations]\label{rem:polytopes}
The square has corners and does not satisfy
Assumption~\ref{ass:geometry}. These computations examine whether the
smooth-domain scalings persist on this nonsmooth domain; they are not
covered by the theorems as stated.
\end{remark}

The script \texttt{exp2\_dynamic.py} solves \eqref{eq:pen-pde} in
the conjugated variable $\hh^\lambda=\rho^\lambda/\wl$ on an enlarged
box with an exterior collar of at least $4\sqrt\lambda$ in each coordinate
direction. A finite-volume scheme with geometric-mean face weights treats
the weighted diffusion $\divg(\wl\nabla\hh^\lambda)$ implicitly and the
bounded drift explicitly by first-order upwind transport, with FFT
convolution refreshed every two steps. The mesh is $h=2/256$ on $K$, the time step is
$2.5\times10^{-4}$, and
\[
\lambda\in\{3\times10^{-2},10^{-2},3\times10^{-3},10^{-3},3\times10^{-4}\}.
\]
The reflected reference uses the same scheme with $\wl\equiv1$ on $K$
and zero flux through its boundary faces. Double precision is used, and
mass is conserved to $10^{-8}$.

\subsection{Density errors and the Gaussian layer}\label{subsec:dynamic-rates}

Figure~\ref{fig:part2-rates} shows the density errors and exterior
moments. The supremum-in-time exterior mass has fitted slope $0.41$;
the $q=1$ and $q=2$
exterior moments have slopes $0.91$ and $1.41$. These approach the
exponents $1/2$, $1$, and $3/2$ of Corollary~\ref{cor:moments}.
The space--time $L^2(K)$ density error and final-time whole-space $L^1$
error have slopes $0.40$ and $0.41$, respectively. The normalized exterior
profiles collapse onto $e^{-u^2/2}$ in the variable
$u=\dist(x,K)/\sqrt\lambda$, resolving the predicted Gaussian layer.

\begin{figure}[t]
\centering
\includegraphics[width=0.495\textwidth]{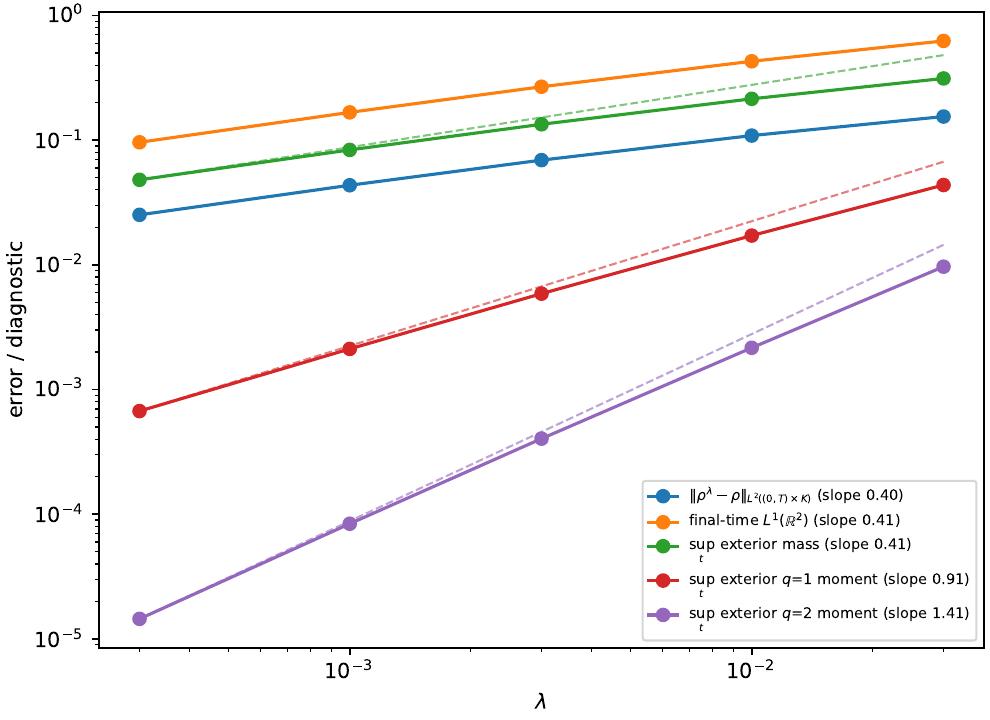}\hfill
\includegraphics[width=0.495\textwidth]{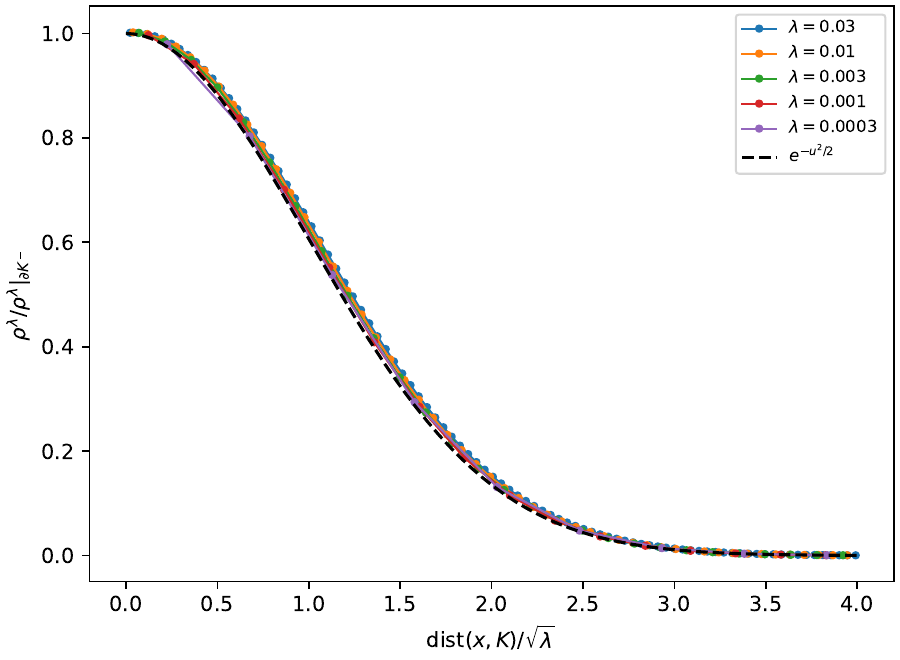}
\caption{Deterministic penalized solves. Left: density errors and exterior
moments versus $\lambda$, with the reference slopes of
Corollary~\ref{cor:moments}. Right: normalized boundary-layer profiles
against $e^{-u^2/2}$, where $u=\dist/\sqrt\lambda$.}
\label{fig:part2-rates}
\end{figure}

\subsection{The boundary reaction}\label{subsec:part2}

Figure~\ref{fig:part2-reaction} compares the surface density obtained by
integrating $q^\lambda$ across the exterior layer with
$\int_0^T\Tr(\rho)\dd t$, computed by second-order trace extrapolation
from the reflected reference. The $L^1(\Gamma)$ profile error has slope
$0.42$. A time-weighted scalar functional and a vector functional, using
a Gaussian test function centered on the right edge, have error slopes
of approximately $0.53$. The fraction of
$|Q^\lambda|$ carried by the exterior corner quadrants decays with slope
$0.45$. These diagnostics support concentration of the reaction on the
boundary with the predicted trace density.

\begin{figure}[t]
\centering
\includegraphics[width=0.55\textwidth]{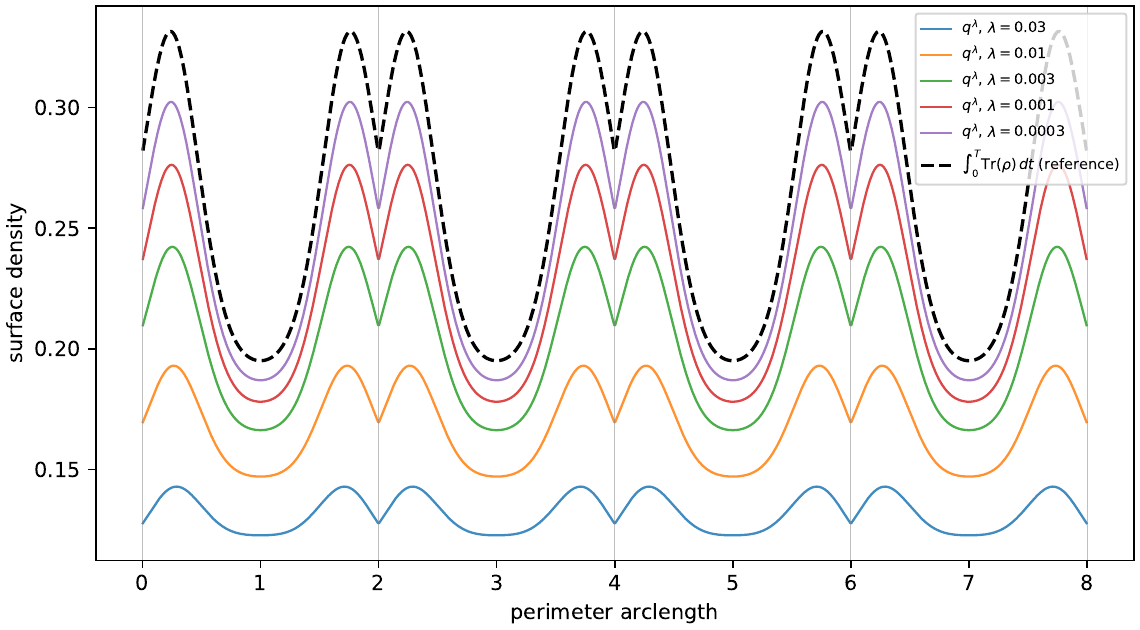}\hfill
\includegraphics[width=0.44\textwidth]{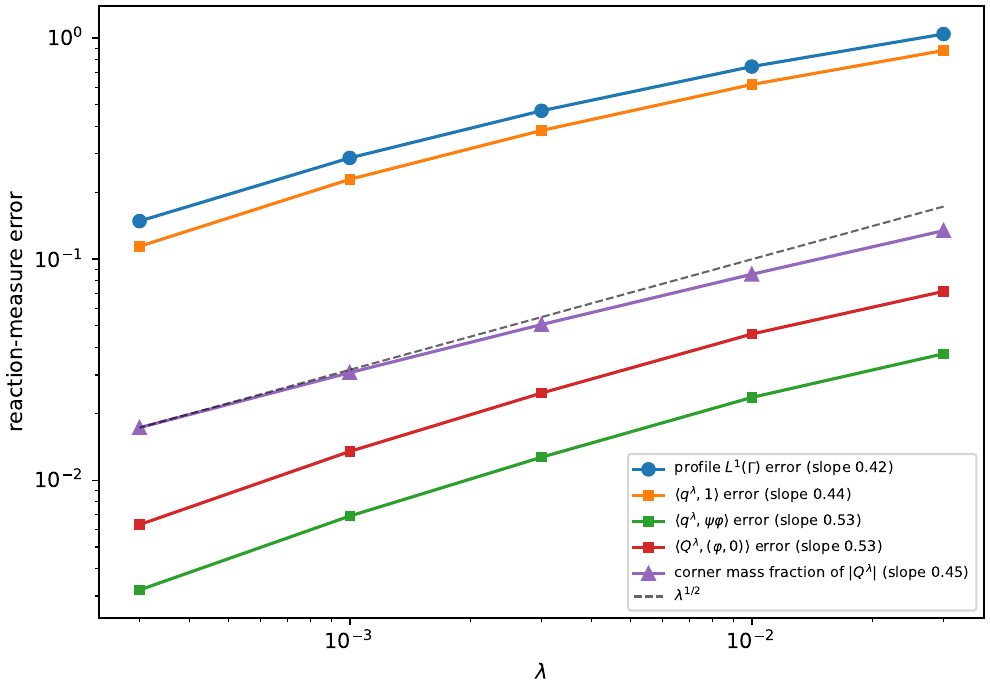}
\caption{Boundary reaction diagnostics. Left: perimeter-unrolled surface
density of $q^\lambda$ against $\int_0^T\Tr(\rho)\dd t$ from the reflected
reference; vertical lines mark corners. Right: errors in the surface
profile and smooth scalar/vector test functionals, and the corner mass
fraction of $|Q^\lambda|$.}
\label{fig:part2-reaction}
\end{figure}

\subsection{Stationary rates}\label{subsec:stationary-numerics}

The script \texttt{exp2\_stationary.py} computes the Gibbs limits in
Figure~\ref{fig:part2-stationary}. In the kernel-free one-dimensional
case, $K=[-1,1]$ and $f(x)=\cos2x+x/2$. This potential is unbounded
below and lies outside the assumptions of
Proposition~\ref{prop:kernel--free}; the Gaussian wall nevertheless makes
the normalizer finite. The exact normalization identity
\eqref{eq:linear-tv-exact} holds to machine precision over
$\lambda\in[10^{-5},10^{-1}]$.
The fitted slopes over $\lambda\le10^{-2}$ are $0.48$, $0.51$, and $0.51$
for $L^1$, $W_1$, and $W_2$, consistent with the half-power benchmark.

In the interacting test, we use the same square-domain MMD target with
$\alpha\in\{0.25,1,6\}$. All three Gaussian kernels are positive
semidefinite, and $\kappa\le\alpha/2$ also gives Dobrushin smallness for
$\alpha=0.25,1$. On the $1040^2$ grid, the fitted $L^1$ slopes
are $0.43$, $0.43$, and $0.40$, consistent with the $\sqrt\lambda$
stationary benchmark; damped iteration is used for the two larger amplitudes.

\begin{figure}[t]
\centering
\includegraphics[width=0.495\textwidth]{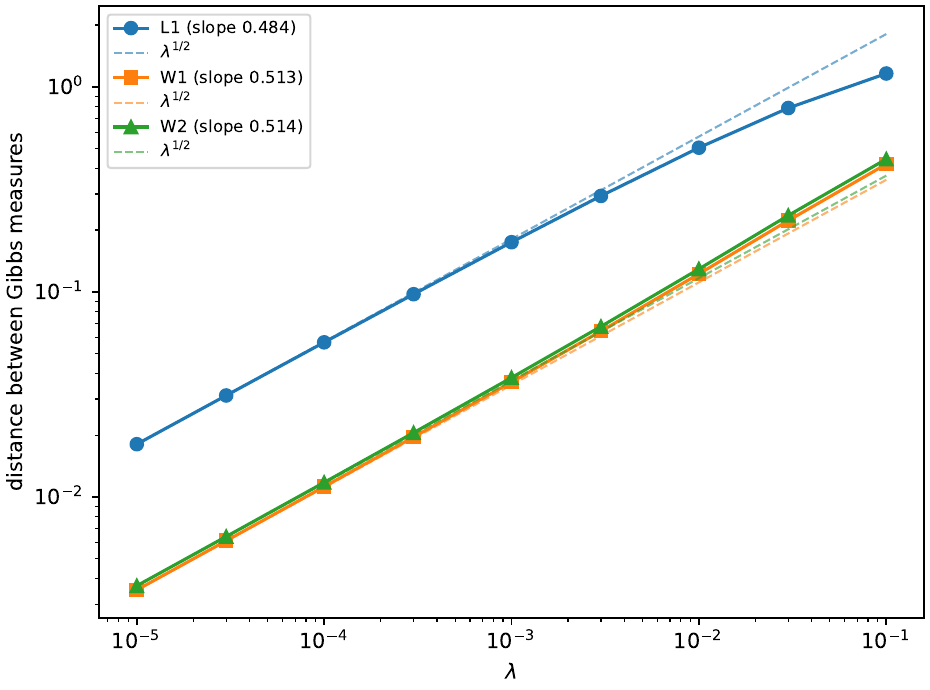}\hfill
\includegraphics[width=0.495\textwidth]{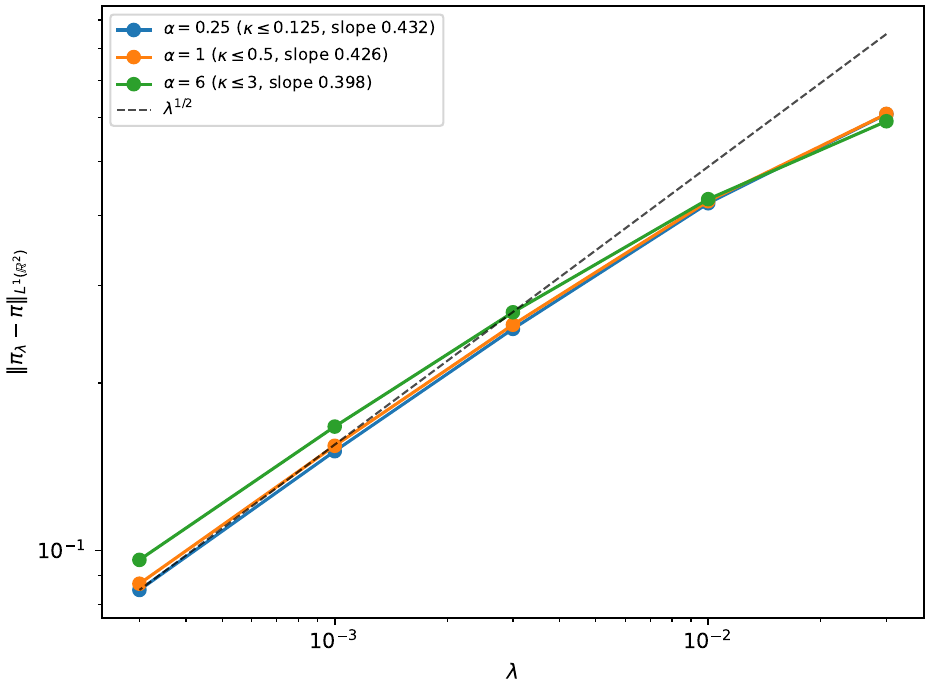}
\caption{Stationary soft-to-hard computations. Left: $L^1$, $W_1$, and
$W_2$ distances for the one-dimensional kernel-free example, with
half-power reference slopes. Right: interacting MMD Gibbs fixed points
at amplitudes $\alpha=0.25,1,6$. The potential and domain limitations
are described in the text.}
\label{fig:part2-stationary}
\end{figure}

\section{Conclusion}\label{sec:conclusion}

We have analyzed the canonical Moreau--Yosida approximation of reflected
McKean--Vlasov dynamics at the level of densities, full fluxes, and boundary
measures.  The penalized solutions converge strongly in the interior, in
whole-space $L^1$, and narrowly uniformly in time.  The singular penalty
force does not disappear: it converges to
$\Tr(\rho)n\,\Hm\!\restriction_{\partial K}\,\dd t$, and this reaction
cancels exactly the surface singularity of the zero-extended diffusive
gradient.  The resulting identity isolates the mechanism by which a smooth
whole-space dynamics generates the reflected no-flux equation.

{The proof separates three roles: a drift-uniform Gaussian
barrier resolves the exterior layer, weighted conjugation gives compactness,
and exact normal factorization with Gauss--Green identifies the reaction.
The stationary theory gives sharp Gibbs rates under Dobrushin smallness or
positive semidefiniteness, while the synchronous coupling gives path-space
penalization and $\lambda$-uniform propagation of chaos.}

{Natural next steps are uniform-in-time interacting estimates
and extensions to nonsmooth or time-dependent feasible sets.}  Quantitative
strong convergence and the vanishing interior normal trace are developed in
the companion paper~\cite{SeoKimChoiStrong}.

\appendix

\section{Convex geometry and Gaussian layer estimates}\label{app:geometry}

We collect the geometric facts used in the body; the proofs are classical,
and we indicate the argument.

\begin{lemma}\label{lem:convex-geom}
Let $K$ satisfy Assumption~\ref{ass:geometry}.  Then:
(i) $\proj_K$ is single-valued and $1$-Lipschitz on $\Rd$, and
$\dist^2\in C^{1,1}(\Rd)$ with $\nabla\dist^2=2(x-\proj_Kx)$;
(ii) $\dist\in C^2(\{0<\dist<\delta_0\})$ for some $\delta_0>0$, with
$|D^2\dist|\le C_K$ there, and
$\nabla\dist(x)=n(\proj_Kx)$ on $\{0<\dist\le\delta_0\}$;
(iii) on $\{\dist\ge\delta_0\}$, $\nabla\dist$ is Lipschitz with constant
$2/\delta_0$ and $0\le\Delta\dist\le(d-1)/\dist$ a.e.
\end{lemma}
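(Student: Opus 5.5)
The three parts are classical convex-geometry facts, and the plan is to prove them in order, each building on the previous one.

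For (i), I will characterize the projection variationally. Closedness and convexity give existence of a nearest point. Uniqueness comes from strict convexity of $|x-\cdot|^2$. The obtuse-angle inequality
\[
(x-\proj_Kx)\cdot(z-\proj_Kx)\le0\qquad\text{for all }z\in K
\]
yields firm nonexpansiveness, hence the $1$-Lipschitz bound, by adding the inequalities for $x$ and $x'$. For differentiability of $\dist^2$, I write
\[
\dist^2(x+h)\le|x+h-\proj_Kx|^2=\dist^2(x)+2h\cdot(x-\proj_Kx)+|h|^2 .
\]
The symmetric bound at $x+h$ gives the lower estimate $\dist^2(x)+2h\cdot(x-\proj_Kx)-C|h|^2$, using the Lipschitz property of the projection. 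Hence $\nabla\dist^2=2(x-\proj_Kx)$, which is $2$-Lipschitz, so $\dist^2\in C^{1,1}$.

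For (ii), I will use the normal map $\Psi(y,s)=y+s\,n(y)$ on $\Gamma\times(0,\infty)$, with $n\in C^1$ because $\Gamma\in C^2$. By convexity $\Psi$ is a bijection onto $K^c$ with inverse $x\mapsto(\proj_Kx,\dist(x))$, as noted in Lemma~\ref{lem:projected-layer}. Its differential is
\[
(\mathrm{id}+s\,dn_y,\ n(y)),
\]
and this is invertible for every $s>0$, since the shape operator $dn_y$ is positive semidefinite. The inverse function theorem then makes $\proj_K$ and $\dist$ of class $C^1$ on $K^c$. Since $\nabla\dist=n\circ\proj_K$ is $C^1$, $\dist$ is $C^2$. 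Explicitly,
\[
D^2\dist(x)=dn_y(\mathrm{id}+s\,dn_y)^{-1}
\]
restricted to $T_y\Gamma$, with eigenvalues $\kappa_i/(1+s\kappa_i)\in[0,\kappa_*]$. This gives $|D^2\dist|\le C_K$ on any collar $\{0<\dist<\delta_0\}$, and in fact everywhere on $K^c$. The identity $\nabla\dist=n(\proj_Kx)$ is just $\nabla\dist^2/(2\dist)$ combined with $x-\proj_Kx=\dist\,n(\proj_Kx)$.

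For (iii), the same eigenvalue formula gives
\[
0\le\Delta\dist=\sum_i\frac{\kappa_i}{1+s\kappa_i}\le\frac{d-1}{s}.
\]
The Lipschitz bound $2/\delta_0$ for $\nabla\dist$ on $\{\dist\ge\delta_0\}$ then follows from a direct estimate:
\[
\nabla\dist(x)-\nabla\dist(x')=\frac{x-\proj_Kx}{\dist(x)}-\frac{x'-\proj_Kx'}{\dist(x')}.
\]
Adding and subtracting $(x'-\proj_Kx')/\dist(x)$ and using the Lipschitz properties of the projection and of $\dist$ bounds this by $2|x-x'|/\delta_0$.

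The main obstacle is justifying that $\Psi$ is a global bijection and a $C^1$ diffeomorphism: $\Gamma$ is only $C^2$, so $\Psi$ is only $C^1$. I will handle this through convexity, which gives global injectivity via the normal-cone characterization of the projection, together with the inverse function theorem for $C^1$ maps. The curvature bound $\kappa_*<\infty$ needs compactness of $\Gamma$ and $n\in C^1$.
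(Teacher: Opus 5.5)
Your proposal is correct and follows essentially the paper's route: the convex projection theorem for (i), normal coordinates with the curvature formula $\kappa_i/(1+s\kappa_i)$ for (ii)--(iii), and a direct estimate of $(x-\proj_Kx)/\dist$ from (i) for the Lipschitz bound. Where the paper invokes the local tubular neighborhood theorem, you use convexity to make $\Psi$ a global $C^1$ diffeomorphism onto $K^c$, as the paper itself does in Lemma~\ref{lem:projected-layer}; this even gives $\dist\in C^2(K^c)$ with $|D^2\dist|\le\kappa_*$, slightly more than the lemma states. One small precision: the constant $2/\delta_0$ requires that $x\mapsto x-\proj_Kx$ is $1$-Lipschitz, which follows from the firm nonexpansiveness you proved in (i); the $1$-Lipschitz bound for $\proj_K$ alone would only give $3/\delta_0$.
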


\begin{proof}[Sketch]
(i) is the standard convex projection theorem.  (ii) is the tubular
neighborhood theorem for $C^2$ hypersurfaces, with
$D^2\dist$ expressed through the principal curvatures
$\kappa_i\ge0$ (convexity) as
$\Delta\dist=\sum_i\kappa_i/(1+s\kappa_i)$.  (iii): monotonicity of the
gradient of the convex function $\dist$ gives $\Delta\dist\ge0$; the upper
bound follows from the same curvature formula, each term being at most
$1/s$; the Lipschitz bound for $\nabla\dist=(x-\proj_Kx)/\dist$ follows from
(i).
\end{proof}

\begin{lemma}\label{lem:layer}
Let $\beta\ge0$ be fixed. For every $q\ge0$ there exists
$C_q=C_q(K,d,\beta)$ such that, for $\lambda\in(0,1]$,
\[
  \int_{K^c}\dist(x)^q
  e^{-\dist(x)^2/(2\lambda)+\beta\dist(x)}\,\dd x
  \le C_q\lambda^{(q+1)/2}.
\]
Moreover, there exists $C=C(K,d,\beta)$ such that, for every
$\delta\ge0$,
\[
  \int_{K^c\cap\{\dist\ge\delta\}}
  e^{-\dist^2/(2\lambda)+\beta\dist}\,\dd x
  \le C e^{-\delta^2/(4\lambda)}.
\]
In particular, $\muh(K^c)\le C\sqrt\lambda$.
\end{lemma}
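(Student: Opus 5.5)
The plan is to reduce both estimates to one-dimensional Gaussian integrals in the normal coordinate $s=\dist(x)$, using the exterior normal parametrization already exploited in Lemma~\ref{lem:projected-layer}. By convexity, the map $\Psi(y,s)=y+s\,n(y)$ is a bijection from $\Gamma\times(0,\infty)$ onto $K^c$ with $\dist(\Psi(y,s))=s$. Equivalently, one can use the coarea formula for $\dist$, which satisfies $|\nabla\dist|=1$ a.e.\ on $K^c$. Either way,
\[
  \int_{K^c}G(\dist(x))\dd x=\int_0^\infty G(s)\,\Hm(\{\dist=s\})\dd s
\]
for every nonnegative measurable $G$.

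The only geometric input is the Steiner-type surface bound
\[
  \Hm(\{\dist=s\})\le \Hm(\Gamma)(1+\kappa_*s)^{d-1}\le C_K(1+s)^{d-1}.
\]
It follows from the tube formula $\dd x=\prod_i(1+s\kappa_i(y))\dd s\dd\Hm(y)$ with $0\le\kappa_i\le\kappa_*$. Alternatively, since $\{\dist\le s\}$ is convex, its perimeter is monotone under inclusion and bounded by that of a ball containing $K+B_s$. I expect this step to be the only point requiring care. It is classical, and I would simply invoke it, exactly as in the proof of Lemma~\ref{lem:projected-layer}.

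For the first estimate, substitute $s=\sqrt\lambda\,u$ and use $\lambda\le1$:
\[
  \int_0^\infty s^q e^{-s^2/2\lambda+\beta s}C_K(1+s)^{d-1}\dd s
  \le C_K\lambda^{(q+1)/2}\int_0^\infty u^q e^{-u^2/2+\beta u}(1+u)^{d-1}\dd u.
\]
Here I used $\beta\sqrt\lambda\,u\le\beta u$ and $(1+\sqrt\lambda u)\le(1+u)$. The last integral is finite and depends only on $q,d,\beta$, which gives $C_q=C_q(K,d,\beta)$.

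For the tail estimate, split the exponent as
\[
  -\frac{s^2}{2\lambda}=-\frac{s^2}{4\lambda}-\frac{s^2}{4\lambda}.
\]
On $\{s\ge\delta\}$, bound the first factor by $e^{-\delta^2/4\lambda}$. The remaining integral $\int_0^\infty e^{-s^2/4\lambda+\beta s}C_K(1+s)^{d-1}\dd s$ is at most
\[
  \int_0^\infty e^{-s^2/4+\beta s}C_K(1+s)^{d-1}\dd s<\infty
\]
for $\lambda\le1$. This yields $Ce^{-\delta^2/(4\lambda)}$ for every $\delta\ge0$.

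The final claim $\muh(K^c)\le C\sqrt\lambda$ is the case $q=0$, $\beta=0$ of the first estimate, since $\wl=e^{-\dist^2/2\lambda}$.
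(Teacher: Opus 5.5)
Your proposal is correct and follows essentially the same route as the paper: the coarea formula in $\dist$, the Steiner-type bound $\Hm(\{\dist=s\})\le C_K(1+s)^{d-1}$, and the rescaling $s=\sqrt\lambda\,u$. The only differences are in the elementary inequalities. You absorb the tilt via $\beta\sqrt\lambda\,u\le\beta u$ and split $-s^2/(2\lambda)$ into two halves for the tail, where the paper uses Young's inequality; both give constants with the same dependence on $(K,d,\beta,q)$.
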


\begin{proof}
By the coarea formula, $|\nabla\dist|=1$ a.e.\ on $K^c$, and the
Steiner formula gives, for $s>0$,
\[
  \Hm(\{\dist=s\})\le C_K(1+s)^{d-1}.
\]
Hence
\[
\begin{aligned}
\int_{K^c}\dist(x)^q
 e^{-\dist(x)^2/(2\lambda)+\beta\dist(x)}\,\dd x
&\le
 C\int_0^\infty
 s^q e^{-s^2/(2\lambda)+\beta s}(1+s)^{d-1}\,\dd s\\
&=
 C\lambda^{(q+1)/2}
 \int_0^\infty
 t^q e^{-t^2/2+\beta\sqrt\lambda\,t}
 (1+\sqrt\lambda\,t)^{d-1}\,\dd t.
\end{aligned}
\]
Since
\[
  \beta\sqrt\lambda\,t\le \frac{t^2}{4}+\beta^2\lambda
  \quad\text{and}\quad
  1+\sqrt\lambda\,t\le1+t,
\]
the last integral is bounded uniformly in $\lambda\in(0,1]$, proving
the first estimate.

For the tail, Young's inequality gives
\[
  \beta s\le\frac{s^2}{8\lambda}+2\beta^2\lambda.
\]
Thus
\[
\begin{aligned}
&\int_{K^c\cap\{\dist\ge\delta\}}
 e^{-\dist^2/(2\lambda)+\beta\dist}\,\dd x\\
&\qquad\le
 C\int_\delta^\infty
 e^{-3s^2/(8\lambda)+2\beta^2\lambda}(1+s)^{d-1}\,\dd s\\
&\qquad\le
 C e^{-\delta^2/(4\lambda)}
 \int_0^\infty
 e^{-s^2/(8\lambda)}(1+s)^{d-1}\,\dd s\\
&\qquad\le C e^{-\delta^2/(4\lambda)},
\end{aligned}
\]
where we used $\lambda\le1$ in the last step.
\end{proof}

\section{Well-posedness of the penalized dynamics}\label{app:wp}

\begin{proof}[Proof of Proposition~\ref{prop:pen-wp}]
Fix $\lambda>0$.  In both initializations of the proposition, the datum
is a probability density and \(\rho_0/w_\lambda\in L^2(\muh)\);
these are the only initial-data properties used below.  Write \(\rho_0\)
for either datum and let
$X_T:=\{\mu\in C([0,T];(\mcP(\Rd),\|\cdot\|_{\TV})):\mu_0=\rho_0\}$, a
complete metric space under
$d_T(\mu,\nu):=\sup_{t\le T}\|\mu_t-\nu_t\|_{\TV}$.

\textbf{Step 1: the frozen-measure SDE.}
Given $\mu\in X_T$, the SDE
\[
  \dd X_t=-\nabla U_\lambda(X_t)\dd t-b[\mu_t](X_t)\dd t+\sqrt2\dd B_t,
  \qquad\mathrm{Law}(X_0)=\rho_0,
\]
has drift Lipschitz in $x$ (constant $\lambda^{-1}+B_1$), bounded on
bounded sets, and measurable in $t$ ($t\mapsto b[\mu_t](x)$ is continuous
by dominated convergence from $\mu\in X_T$ and $W\in C^1_b$); hence a
unique strong solution.  Set $\mcS(\mu)_t:=\mathrm{Law}(X_t)$.  Its
marginals have densities: by Girsanov's theorem, $\mathrm{Law}(X_t)$ is
absolutely continuous with respect to the marginal of the
$U_\lambda$-Langevin diffusion (bounded drift perturbation), which has a
density for $t>0$ and, at $t=0$, by assumption.  Moreover
$t\mapsto\mcS(\mu)_t$ is $\TV$-continuous: the density solves a linear
Fokker--Planck equation with locally bounded coefficients and is continuous
in $L^1$ by parabolic theory (or by the strong continuity in $L^1$ of the
associated evolution family).  Thus $\mcS:X_T\to X_T$.

\textbf{Step 2: contraction via Girsanov and Pinsker.}
Let $\mu^1,\mu^2\in X_T$ and let $P^1,P^2$ denote the path laws on
$C([0,T];\Rd)$ of the corresponding frozen-measure diffusions, with the
same initial law.  The two drifts differ by
$\Delta b(t,x):=b[\mu^1_t](x)-b[\mu^2_t](x)$, and by
\eqref{eq:drift-bounds},
\[
  \|\Delta b(t,\cdot)\|_\infty\ \le\ L_W\,\|\mu^1_t-\mu^2_t\|_{\TV}.
\]
Both diffusions have the same (constant) diffusion matrix $2\,\mathrm{Id}$
and bounded-difference drifts, so Girsanov's theorem applies and gives the
path-space relative entropy bound
\[
  H\bigl(P^1\,\big|\,P^2\bigr)
  =\frac1{4}\,\mathbb E^{P^1}\!\int_0^T\bigl|\Delta b(t,X_t)\bigr|^2\dd t
  \ \le\ \frac{1}{4}\int_0^T\|\Delta b(t,\cdot)\|_\infty^2\dd t
  \ \le\ \frac{L_W^2\,T}{4}\,d_T(\mu^1,\mu^2)^2 .
\]
Marginals are images of path laws under evaluation, and relative entropy
decreases under images; hence by Pinsker's inequality (total-mass
convention, $\|\cdot\|_{\TV}\le\sqrt{2H}$),
\[
  d_T\bigl(\mcS(\mu^1),\mcS(\mu^2)\bigr)
  \le\sup_{t\le T}\sqrt{2H(P^1_t|P^2_t)}
  \le\sqrt{2H(P^1|P^2)}
  \le L_W\sqrt{T/2}\;d_T(\mu^1,\mu^2).
\]
For $T_0<2/L_W^2$ the map $\mcS$ is a strict contraction on $X_{T_0}$, and
Banach's fixed point theorem produces the unique law of the McKean--Vlasov
SDE on $[0,T_0]$; since $T_0$ does not depend on the initial datum, the
argument restarts at $T_0,2T_0,\dots$ and covers $[0,T]$.  (We emphasize
that the entire fixed-point argument is run in the $\TV$ metric; no claim
that $W_1$ controls $\TV$ is used at any point.)

\textbf{Step 3: PDE class and energy identity.}
Freeze the drift $b(t,x):=b[\mu_t](x)$ along the fixed point; it is $C^1_b$
in $x$ with the bounds $B_0,B_1$, and continuous in $t$ (by
$\TV$-continuity of $t\mapsto\mu_t$ and $W\in C^2_b$).  We solve the ratio
equation \eqref{eq:ratio-eq} in the Gelfand triple
\[
  V:=H^1(\muh)\ \hookrightarrow\ H:=L^2(\muh)\ \hookrightarrow\ V',
\]
where $V$ and $H$ are separable Hilbert spaces and the embeddings are
dense and continuous.  Three facts about the weight are used.  First,
$C_c^\infty(\Rd)$ is dense in $V$: for $u\in V$, the truncations
$u\chi_R$ (with $|\nabla\chi_R|\le2/R$) converge to $u$ in $V$ by
dominated convergence, since $\int(u^2+|\nabla u|^2)\dd\muh<\infty$ makes
the tail and cutoff-gradient errors vanish, and a compactly supported
$H^1$ function is approximated by mollification, the weight
$\wl=e^{-U_\lambda}$ being continuous and locally bounded above and below
by positive constants.  Second, for a.e.\ $t$ the bilinear form
\[
  a_t(u,v):=\int_{\Rd}\nabla u\cdot\nabla v\dd\muh
  +\int_{\Rd}u\,b(t,\cdot)\cdot\nabla v\dd\muh
\]
is bounded on $V\times V$, $|a_t(u,v)|\le(1+B_0)\|u\|_V\|v\|_V$, with
$t\mapsto a_t(u,v)$ continuous for fixed $u,v$.  Third, $a_t$ satisfies
the G\aa rding inequality
\[
  a_t(u,u)\ \ge\ \|\nabla u\|_{H}^2-B_0\|u\|_H\|\nabla u\|_H
  \ \ge\ \tfrac12\|\nabla u\|_{H}^2-\tfrac{B_0^2}2\|u\|_H^2 .
\]
By the classical existence theory for nonautonomous parabolic equations in
Gelfand triples (J.-L. Lions; see \cite[Ch.~III]{LionsMagenes}, whose
Galerkin construction --- approximation on finite-dimensional subspaces of
$V$, uniform energy bounds from the G\aa rding inequality, and passage to
the limit --- applies verbatim to the triple above), for every
$\hh_0\in H$ there is a unique
\[
  \hh\in L^2(0,T;V)\cap C([0,T];H),
  \qquad
  \partial_t\hh\in L^2(0,T;V'),
\]
such that $\langle\partial_t\hh(t),v\rangle_{V',V}+a_t(\hh(t),v)=0$ for
a.e.\ $t$ and all $v\in V$, and the Lions--Magenes chain rule
$\frac{\dd}{\dd t}\tfrac12\|\hh_t\|_H^2
=\langle\partial_t\hh(t),\hh(t)\rangle_{V',V}$ applied with $v=\hh(t)$
yields exactly the energy identity \eqref{eq:energy-identity}.  Undoing
the ratio, $\rho^\lambda:=\hh\,\wl$, and using the density of
$C_c^\infty(\Rd)$ in $V$, the $V'$-formulation coincides with the
distributional formulation \eqref{eq:pen-weak}.  At this stage
$\rho^\lambda$ satisfies the energy and weak-formulation requirements of
Definition~\ref{def:pen-sol}, but not yet its measure-theoretic ones:
nonnegativity, unit mass, and continuity as an $L^1$-valued probability
density are obtained only in Step 4 below, after which $\rho^\lambda$ is a
penalized solution in the full sense of Definition~\ref{def:pen-sol}.
Uniqueness within that class is Lemma~\ref{lem:pen-stability} (whose proof
uses only the definition).

\textbf{Step 4: identification with the SDE marginals.}
The marginal flow $t\mapsto\mu_t$ of Steps 1--2 also solves the frozen
linear Fokker--Planck equation weakly; we now check that it coincides with
the energy solution $\rho^\lambda$ of Step 3.  This identification is what
supplies nonnegativity, unit mass, and $\TV$-continuity of the energy
solution.  For fixed $\lambda$ the frozen drift
$-\nabla U_\lambda-b(t,\cdot)$ is globally Lipschitz in $x$ (constant
$\lambda^{-1}+B_1$), uniformly in $t$, and continuous in $t$.  For such
coefficients the Cauchy problem for the Fokker--Planck equation has
\emph{at most one} solution in the class of vaguely continuous flows
$t\mapsto\sigma_t$ of finite signed measures with
$\sup_{t\le T}|\sigma_t|(\Rd)<\infty$ and uniformly vanishing tails: this
is the {standard backward-parabolic duality argument}
with the backward Kolmogorov equation --- mollifying the drift preserves the Lipschitz constant, the
mollified terminal-value problems with data
$\varphi\in C_c^\infty(\Rd)$ have bounded classical solutions whose
spatial gradients are bounded uniformly in the mollification (via the
Feynman--Kac representation along the Lipschitz stochastic flow), and
testing the difference of two solutions with these backward solutions and
passing to the limit annihilates it against every such $\varphi$; see
\cite[Ch.~9]{BKRS} for the general uniqueness theory of
Fokker--Planck--Kolmogorov equations, and note that, probabilistically,
the statement is uniqueness for the martingale problem with globally
Lipschitz coefficients.  Both flows belong to this class.  The SDE
marginals are probability measures, $\TV$-continuous by Step 1, and
uniformly tight (a $\TV$-continuous curve on $[0,T]$ is a compact set of
probability measures).  The energy solution satisfies, by Cauchy--Schwarz
against $\muh$,
\[
  \|\rho^\lambda_t\|_{L^1(\Rd)}\le\|\hh_t\|_H\,\muh(\Rd)^{1/2},
  \qquad
  \int_{\{|x|\ge R\}}|\rho^\lambda_t|\dd x
  \le\|\hh_t\|_H\,\muh(\{|x|\ge R\})^{1/2}
  \ \xrightarrow[R\to\infty]{}\ 0
\]
uniformly in $t$ with $\sup_{t\le T}\|\hh_t\|_H<\infty$, and $t\mapsto\rho^\lambda_t$ is
$L^1$-continuous by $\hh\in C([0,T];H)$ and the same bound; moreover the
compactly supported test functions of \eqref{eq:pen-weak} extend to the
bounded backward test functions by the cutoff argument already used in
this proof, the flux being time-integrable at fixed $\lambda$,
$\int_0^T\!\!\int_{\Rd}|J^\lambda|\dd x\dd t
\le T^{1/2}\bigl(\int_0^T\!\!\int|\nabla\hh|^2\dd\muh\dd t\bigr)^{1/2}
\muh(\Rd)^{1/2}
+B_0T\sup_{t\le T}\|\rho_t^\lambda\|_{L^1(\Rd)}<\infty$.
Hence $\rho^\lambda_t=\mu_t$ for every $t\in[0,T]$; in particular the
energy solution of Step 3 is a nonnegative probability density,
continuous in $L^1$, and is therefore a penalized solution in the full
sense of Definition~\ref{def:pen-sol}.
\end{proof}

\section{Comparison principle}\label{app:comparison}

\begin{lemma}\label{lem:comparison}
Let $b\in L^\infty((0,T)\times\Rd;\Rd)$ with
{$b(t,\cdot)\in\mathcal B$ for a.e.\ $t$}, let $\rho$ be a weak solution
of $\partial_t\rho=\divg(\nabla\rho+\rho\nabla U_\lambda+\rho b)$ in the
class of Definition~\ref{def:pen-sol}, and let
$\bar R=Me^{\gamma t}w_\beta$ satisfy \eqref{eq:supersol} with
$\rho(0)\le\bar R(0)$ a.e.  Then $\rho\le\bar R$ a.e.\ on
$[0,T]\times\Rd$.
\end{lemma}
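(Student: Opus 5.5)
We work with the ratio $u:=\rho/w_\beta$ and its positive part. Write $w_\beta=e^{\beta\zeta}\wl$, so $\bar R/w_\beta=Me^{\gamma t}$. We aim for an $L^2$ energy inequality for
\[
(u-Me^{\gamma t})_+=(\rho-\bar R)_+/w_\beta
\]
in the weighted space $L^2(w_\beta\dd x)$, and then close it with Gr\"onwall starting from zero initial data.

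\textbf{Step 1: the equation for the ratio.} Since $\nabla\rho+\rho\nabla U_\lambda=w_\beta\nabla u+\beta\rho\nabla\zeta$, the equation becomes, weakly,
\[
w_\beta\partial_tu=\divg\bigl(w_\beta\nabla u+w_\beta u\,v\bigr),\qquad v:=\beta\nabla\zeta+b .
\]
Here $v$ is bounded with $|v|\le\beta+B_0$. The constant $k(t):=Me^{\gamma t}$ is a supersolution of this equation by \eqref{eq:div-condition}, in the form $w_\beta k'\ge\divg(w_\beta k v)$.

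The membership $\rho\in$ Definition~\ref{def:pen-sol} gives $u=\hh\,e^{-\beta\zeta}$. Because $\zeta$ is $C^{1,1}$ with $|\nabla\zeta|\le1$, and $e^{\pm\beta\zeta}$ grows at most like $e^{\beta\dist}$ (which is integrable against $\wl$ by Lemma~\ref{lem:layer}), we get $u\in L^2(0,T;H^1(w_\beta))$ and $\partial_tu\in L^2(0,T;H^1(w_\beta)')$. Some care is needed here. The honest route is to test the $\hh$-equation with $e^{-\beta\zeta}z$ for $z\in C_c^\infty$, and then extend to $z\in H^1(w_\beta)$ by the cutoff-density argument of Appendix~\ref{app:wp}.

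\textbf{Step 2: testing with the positive part.} Set $z:=(u-k)_+$. This is an admissible test function, since $k$ is spatially constant and $z\in L^2(H^1(w_\beta))$. The Lions--Magenes/Steklov chain rule for $(\cdot)_+^2$ gives
\[
\tfrac12\tfrac{\dd}{\dd t}\!\int z^2w_\beta+\int|\nabla z|^2w_\beta
=-\int u\,v\cdot\nabla z\,w_\beta-\int k'z\,w_\beta .
\]
Split $u=z+k$ on $\{z>0\}$. The $k$-part is
\[
-k\int v\cdot\nabla z\,w_\beta-k'\int z\,w_\beta=k\int z\bigl(\divg(w_\beta v)-\gamma w_\beta\bigr)\le0
\]
after integration by parts, using $k'=\gamma k$ and \eqref{eq:div-condition}. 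The integration by parts is valid because $w_\beta v$ is locally Lipschitz with no jump across $\Gamma$ (as noted in Step 1 of Theorem~\ref{thm:barrier}), and because $z w_\beta$ has enough decay, justified by cutoffs $\chi_R$. The $z$-part satisfies
\[
\Bigl|\int z\,v\cdot\nabla z\,w_\beta\Bigr|\le\tfrac12\int|\nabla z|^2w_\beta+\tfrac{(\beta+B_0)^2}2\int z^2w_\beta .
\]
Gr\"onwall with $z(0)=0$ (from $\rho(0)\le\bar R(0)$) then gives $z\equiv0$, that is, $\rho\le\bar R$.

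\textbf{Main obstacle.} The delicate step is justification rather than algebra. Three points need attention: the chain rule for $\|(u-k)_+\|^2$ in the weighted Gelfand triple, where the weight $w_\beta$ differs from $\wl$ by the unbounded factor $e^{\beta\zeta}$; the removal of spatial cutoffs; and the boundary-free integration by parts in the $k$-term. For the cutoffs, we would test with $\chi_R^2 z$. The error terms carry a factor $R^{-1}\int(|\nabla z|+|z|)z\,w_\beta$, which is finite because $e^{\beta\zeta}\wl\le Ce^{\beta\dist}e^{-\dist^2/2\lambda}$ is integrable (Lemma~\ref{lem:layer}). Letting $R\to\infty$ then yields the inequality above.
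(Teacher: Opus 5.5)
Your proposal is correct and follows essentially the paper's argument: you pass to the ratio $u=\rho/w_\beta$, compare with the spatially constant supersolution $Me^{\gamma t}$ by testing with $(u-Me^{\gamma t})_+\chi_R^2$, absorb the drift term by Young's inequality, and conclude by Gr\"onwall once the cutoff terms vanish. One small correction to your justification: the weighted bounds on $u$ follow from $\zeta\ge0$ and $|\nabla\zeta|\le1$ (e.g.\ $u^2w_\beta=\hh^2e^{-\beta\zeta}\wl\le\hh^2\wl$), and since $w_\beta\partial_tu=\wl\partial_t\hh$, the weak $u$-equation is the original equation tested with the same function, not with $e^{-\beta\zeta}z$; the integrability of $e^{\beta\dist}\wl$ from Lemma~\ref{lem:layer} is needed only to place the constant $Me^{\gamma t}$ in $H^1(w_\beta)$.
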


\begin{proof}
Let $h:=\rho/w_\beta$ and $\bar h:=Me^{\gamma t}$.  Since
$\nabla\rho+\rho\nabla U_\lambda+\rho b
=w_\beta\bigl(\nabla h+h\,c\bigr)$ with
$c:=\beta\nabla\zeta+b$ bounded, $h$ solves
\begin{equation}\label{eq:h-beta-eq}
  w_\beta\,\partial_th=\divg\bigl(w_\beta\nabla h+w_\beta h\,c\bigr)
\end{equation}
weakly, while \eqref{eq:supersol} states precisely that
$\divg(w_\beta c)\le\gamma w_\beta$ distributionally, i.e.\ that
$\bar h$ is a supersolution of \eqref{eq:h-beta-eq}:
$w_\beta\partial_t\bar h-\divg(w_\beta\nabla\bar h+w_\beta\bar h c)
=\bar h\,(\gamma w_\beta-\divg(w_\beta c))\ge0$.  Set
$z:=(h-\bar h)_+$.  To justify the positive-part test, note that
$h=\hh e^{-\beta\zeta}$, $\zeta\ge0$, and $|\nabla\zeta|\le1$ give
\[
\begin{aligned}
  \int h^2w_\beta&\le\int\hh^2\wl,\\
  \int|\nabla h|^2w_\beta
  &\le2\int\bigl(|\nabla\hh|^2+\beta^2\hh^2\bigr)\wl.
\end{aligned}
\]
The solution class therefore controls $h$ in
$L^\infty(0,T;L^2(w_\beta))\cap L^2(0,T;H^1(w_\beta))$.
For $R>0$ let $\chi_R$ be smooth cutoffs, equal to one on $B_R$ and zero
outside $B_{2R}$, with $|\nabla\chi_R|\le2/R$.
Testing the difference of the two relations with $z\chi_R^2\ge0$
(using time regularization) gives
\[
  \frac12\frac{\dd}{\dd t}\int w_\beta z^2\chi_R^2
  +\int w_\beta|\nabla z|^2\chi_R^2
  \le\int w_\beta|z||c||\nabla z|\chi_R^2
  +\Theta_R(t),
\]
where $\Theta_R$ collects the cutoff terms,
$\Theta_R(t)\le\frac CR\int_{B_{2R}\setminus B_R}
w_\beta\bigl(z|\nabla z|+z^2(1+|c|)\bigr)$, which is integrable in $t$ and
tends to $0$ as $R\to\infty$ by the global bounds
$\int w_\beta z^2\le C$, $\int_0^T\!\int w_\beta|\nabla z|^2<\infty$
(by the preceding estimates and
$\sup_{t\le T}\int\bar h^2w_\beta\le M^2e^{2\gamma T}\int w_\beta<\infty$).  Young's inequality and
Gr\"onwall, then $R\to\infty$ by monotone convergence, give
$\int w_\beta z^2(t)\le e^{Ct}\int w_\beta z^2(0)=0$, i.e.\ $h\le\bar h$.
\end{proof}

{\small
\bibliographystyle{plainnat}
\bibliography{ref_restructured}

\begin{thebibliography}{28}
\providecommand{\natexlab}[1]{#1}
\providecommand{\url}[1]{\texttt{#1}}
\expandafter\ifx\csname urlstyle\endcsname\relax
  \providecommand{\doi}[1]{doi: #1}\else
  \providecommand{\doi}{doi: \begingroup \urlstyle{rm}\Url}\fi

\bibitem[Alasio et~al.(2020)Alasio, Bruna, and
  Carrillo]{AlasioBrunaCarrillo2020}
Luca Alasio, Maria Bruna, and Jos{\'e}~A. Carrillo.
\newblock The role of a strong confining potential in a nonlinear
  {F}okker--{P}lanck equation.
\newblock \emph{Nonlinear Analysis}, 193:\penalty0 111480, 2020.
\newblock \doi{10.1016/j.na.2019.03.003}.

\bibitem[Alexander and Kim(2016)]{AlexanderKim2016}
Damon Alexander and Inwon Kim.
\newblock A {F}okker--{P}lanck type approximation of parabolic {PDE}s with
  oblique boundary data.
\newblock \emph{Transactions of the American Mathematical Society},
  368\penalty0 (8):\penalty0 5753--5781, 2016.
\newblock \doi{10.1090/tran/6521}.

\bibitem[Ambrosio et~al.(2005)Ambrosio, Gigli, and Savar{\'e}]{AGS}
Luigi Ambrosio, Nicola Gigli, and Giuseppe Savar{\'e}.
\newblock \emph{Gradient Flows in Metric Spaces and in the Space of Probability
  Measures}.
\newblock Birkh{\"a}user, Basel, 2005.
\newblock \doi{10.1007/b137080}.

\bibitem[Arbel et~al.(2019)Arbel, Korba, Salim, and
  Gretton]{ArbelKorbaSalimGretton2019}
Michael Arbel, Anna Korba, Adil Salim, and Arthur Gretton.
\newblock Maximum mean discrepancy gradient flow.
\newblock In \emph{Advances in Neural Information Processing Systems},
  volume~32, 2019.

\bibitem[Bogachev et~al.(2015)Bogachev, Krylov, R{\"o}ckner, and
  Shaposhnikov]{BKRS}
Vladimir~I. Bogachev, Nicolai~V. Krylov, Michael R{\"o}ckner, and Stanislav~V.
  Shaposhnikov.
\newblock \emph{Fokker--Planck--Kolmogorov Equations}, volume 207 of
  \emph{Mathematical Surveys and Monographs}.
\newblock American Mathematical Society, Providence, RI, 2015.
\newblock \doi{10.1090/surv/207}.

\bibitem[Brosse et~al.(2017)Brosse, Durmus, Moulines, and
  Pereyra]{BrosseDurmusMoulinesPereyra2018}
Nicolas Brosse, Alain Durmus, {\'E}ric Moulines, and Marcelo Pereyra.
\newblock Sampling from a log-concave distribution with compact support with
  proximal {L}angevin {M}onte {C}arlo.
\newblock In \emph{Proceedings of the 30th Conference on Learning Theory},
  volume~65 of \emph{Proceedings of Machine Learning Research}, pages 319--342.
  PMLR, 2017.

\bibitem[Bubeck et~al.(2018)Bubeck, Eldan, and Lehec]{BubeckEldanLehec2018}
S{\'e}bastien Bubeck, Ronen Eldan, and Joseph Lehec.
\newblock Sampling from a log-concave distribution with projected {L}angevin
  {M}onte {C}arlo.
\newblock \emph{Discrete \& Computational Geometry}, 59\penalty0 (4):\penalty0
  757--783, 2018.
\newblock \doi{10.1007/s00454-018-9992-1}.

\bibitem[Carrillo et~al.(2003)Carrillo, McCann, and
  Villani]{CarrilloMcCannVillani2003}
Jos{\'e}~A. Carrillo, Robert~J. McCann, and C{\'e}dric Villani.
\newblock Kinetic equilibration rates for granular media and related equations:
  entropy dissipation and mass transportation estimates.
\newblock \emph{Revista Matem{\'a}tica Iberoamericana}, 19\penalty0
  (3):\penalty0 971--1018, 2003.
\newblock \doi{10.4171/RMI/376}.

\bibitem[Choi et~al.(2026)Choi, Kim, and Seo]{SeoKimChoiStrong}
Beomjun Choi, Yeoneung Kim, and Geuntaek Seo.
\newblock Soft-confinement approximation of no-flux {M}c{K}ean--{V}lasov
  dynamics: strong convergence and long-time behavior.
\newblock Companion paper in preparation, 2026.

\bibitem[G{\"u}rb{\"u}zbalaban et~al.(2024)G{\"u}rb{\"u}zbalaban, Hu, and
  Zhu]{GurbuzbalabanHuZhu2024}
Mert G{\"u}rb{\"u}zbalaban, Yuanhan Hu, and Lingjiong Zhu.
\newblock Penalized overdamped and underdamped {L}angevin {M}onte {C}arlo
  algorithms for constrained sampling.
\newblock \emph{Journal of Machine Learning Research}, 25\penalty0
  (263):\penalty0 1--67, 2024.

\bibitem[Hairer(2021)]{Hairer2021}
Martin Hairer.
\newblock Convergence of {M}arkov processes.
\newblock Lecture notes, 2021.
\newblock May 26, 2021. \url{https://www.hairer.org/notes/Convergence.pdf}.

\bibitem[Hinds et~al.(2025)Hinds, Sharma, and
  Tretyakov]{HindsSharmaTretyakov2025}
Piers~D. Hinds, Akash Sharma, and Michael~V. Tretyakov.
\newblock Well-posedness and approximation of reflected {M}c{K}ean--{V}lasov
  {SDE}s with applications.
\newblock \emph{Mathematical Models and Methods in Applied Sciences},
  35\penalty0 (8):\penalty0 1845--1887, 2025.
\newblock \doi{10.1142/S0218202525500241}.

\bibitem[Jordan et~al.(1998)Jordan, Kinderlehrer, and Otto]{JKO}
Richard Jordan, David Kinderlehrer, and Felix Otto.
\newblock The variational formulation of the {F}okker--{P}lanck equation.
\newblock \emph{SIAM Journal on Mathematical Analysis}, 29\penalty0
  (1):\penalty0 1--17, 1998.
\newblock \doi{10.1137/S0036141096303359}.

\bibitem[Laayoun and Missaoui(2026)]{LaayounMissaoui2026}
Ayoub Laayoun and Badr Missaoui.
\newblock Mean field games with reflected dynamics: penalization and relaxed
  control approach.
\newblock arXiv:2603.15477, 2026.

\bibitem[Lamperski(2021)]{Lamperski2021}
Andrew Lamperski.
\newblock Projected stochastic gradient {L}angevin algorithms for constrained
  sampling and non-convex learning.
\newblock In \emph{Proceedings of the 34th Conference on Learning Theory},
  volume 134 of \emph{Proceedings of Machine Learning Research}, pages
  2891--2937. PMLR, 2021.

\bibitem[Lions and Magenes(1972)]{LionsMagenes}
Jacques-Louis Lions and Enrico Magenes.
\newblock \emph{Non-Homogeneous Boundary Value Problems and Applications, Vol.
  I}.
\newblock Springer, Berlin, Heidelberg, 1972.
\newblock \doi{10.1007/978-3-642-65161-8}.

\bibitem[Lions and Sznitman(1984)]{LionsSznitman1984}
Pierre-Louis Lions and Alain-Sol Sznitman.
\newblock Stochastic differential equations with reflecting boundary
  conditions.
\newblock \emph{Communications on Pure and Applied Mathematics}, 37\penalty0
  (4):\penalty0 511--537, 1984.
\newblock \doi{10.1002/cpa.3160370408}.

\bibitem[Mane and Boukardagha(2026)]{ManeBoukardagha2025}
Tarika Mane and Amine Boukardagha.
\newblock Convergence of reflected {L}angevin diffusion for constrained
  sampling.
\newblock arXiv:2512.00386v2, 2026.

\bibitem[Menaldi(1983)]{Menaldi1983}
Jos{\'e}-Luis Menaldi.
\newblock Stochastic variational inequality for reflected diffusion.
\newblock \emph{Indiana University Mathematics Journal}, 32\penalty0
  (5):\penalty0 733--744, 1983.
\newblock \doi{10.1512/iumj.1983.32.32048}.

\bibitem[Ning and Wu(2026)]{NingWu2026}
Ning Ning and Jing Wu.
\newblock Well-posedness and propagation of chaos for {M}c{K}ean--{V}lasov
  stochastic variational inequalities.
\newblock \emph{Journal of Theoretical Probability}, 39:\penalty0 Paper No.~5,
  2026.
\newblock \doi{10.1007/s10959-025-01459-0}.

\bibitem[Peyre(2018)]{Peyre2018}
R{\'e}mi Peyre.
\newblock Comparison between {$W_2$} distance and {$\dot H^{-1}$} norm, and
  localization of wasserstein distance.
\newblock \emph{ESAIM: Control, Optimisation and Calculus of Variations},
  24\penalty0 (4):\penalty0 1489--1501, 2018.
\newblock \doi{10.1051/cocv/2017050}.

\bibitem[Santambrogio(2015)]{Santambrogio}
Filippo Santambrogio.
\newblock \emph{Optimal Transport for Applied Mathematicians}.
\newblock Birkh{\"a}user, Cham, 2015.
\newblock \doi{10.1007/978-3-319-20828-2}.

\bibitem[S{\l}omi{\'n}ski(1994)]{Slominski1994}
Leszek S{\l}omi{\'n}ski.
\newblock On approximation of solutions of multidimensional {SDE}s with
  reflecting boundary conditions.
\newblock \emph{Stochastic Processes and their Applications}, 50\penalty0
  (2):\penalty0 197--219, 1994.
\newblock \doi{10.1016/0304-4149(94)90118-X}.

\bibitem[S{\l}omi{\'n}ski(2013)]{Slominski2013}
Leszek S{\l}omi{\'n}ski.
\newblock Weak and strong approximations of reflected diffusions via
  penalization methods.
\newblock \emph{Stochastic Processes and their Applications}, 123\penalty0
  (3):\penalty0 752--763, 2013.
\newblock \doi{10.1016/j.spa.2012.10.006}.

\bibitem[Sznitman(1984)]{Sznitman1984}
Alain-Sol Sznitman.
\newblock Nonlinear reflecting diffusion process, and the propagation of chaos
  and fluctuations associated.
\newblock \emph{Journal of Functional Analysis}, 56\penalty0 (3):\penalty0
  311--336, 1984.
\newblock \doi{10.1016/0022-1236(84)90080-6}.

\bibitem[Sznitman(1991)]{Sznitman1991}
Alain-Sol Sznitman.
\newblock Topics in propagation of chaos.
\newblock In \emph{\'{E}cole d'\'{E}t{\'e} de Probabilit{\'e}s de Saint-Flour
  XIX--1989}, volume 1464 of \emph{Lecture Notes in Mathematics}, pages
  165--251. Springer, Berlin, Heidelberg, 1991.
\newblock \doi{10.1007/BFb0085169}.

\bibitem[Tanaka(1979)]{Tanaka1979}
Hiroshi Tanaka.
\newblock Stochastic differential equations with reflecting boundary condition
  in convex regions.
\newblock \emph{Hiroshima Mathematical Journal}, 9\penalty0 (1):\penalty0
  163--177, 1979.
\newblock \doi{10.32917/hmj/1206135203}.

\bibitem[Yu et~al.(2026)Yu, Zhang, and Yu]{YuZhangYu2026}
Yifeng Yu, Shijie Zhang, and Lu~Yu.
\newblock Randomized midpoint method for log-concave sampling under
  constraints.
\newblock arXiv:2405.15379v3, 2026.

\end{thebibliography}
}

\section*{Statements and Declarations}
\paragraph*{Competing interests}
The authors have no relevant financial or non-financial interests to disclose.
\paragraph*{Author contributions}
Geuntaek Seo and Beomjun Choi contributed equally and share first authorship.
\paragraph*{Data availability}
The numerical experiments use synthetic distributions specified in the text.
The numerical methods and experiment parameters are described in
Section~\ref{sec:numerics}; no external datasets were used.
\end{document}